\DeclareMathAlphabet{\mathpzc}{OT1}{pzc}{m}{it}
\renewcommand{\subsection}{\@startsection%
   {subsection}%
   {2}%
   {0mm}%
   {-\baselineskip}%
   {-0.5\baselineskip}%
   {\normalfont\normalsize\bfseries}}%
\theoremstyle{plain}
\newtheorem*{theorem}{Theorem}
\newtheorem*{lemma}{Lemma}
\newtheorem*{proposition}{Proposition}
\newtheorem*{corollary}{Corollary}
\theoremstyle{definition}
\newtheorem*{definition}{Definition}
\newtheorem*{example}{Example}
\theoremstyle{remark}
\newtheorem*{remark}{Remark}
\def\DS{\displaystyle }
\def\Rw{\Rightarrow }  
\newcommand{\hrw}{\hookrightarrow}
\def\rel{{\longrightarrow\hspace*{-2.8ex}{\mapstochar}\hspace*{2.8ex}}}
\def\bim{{\longrightarrow\hspace*{-3.1ex}{\circ}\hspace*{1.5ex}}}
\newcommand{\field}[1]{\mathds{#1}}
\newcommand{\N}{\field{N}}
\def\a{\mathfrak{a}}
\def\ff{\mathfrak{f}}
\def\gg{\mathfrak{g}}
\def\u{\mathfrak{u}}
\def\v{\mathfrak{v}}
\def\x{\mathfrak{x}}
\def\y{\mathfrak{y}}
\def\w{\mathfrak{w}}
\def\z{\mathfrak{z}}
\def\ww{\mathfrak{W}}
\def\xx{\mathfrak{X}}
\newcommand{\frp}{\mathfrak{p}}
\newcommand{\frq}{\mathfrak{q}}
\newcommand{\frx}{\mathfrak{x}}
\newcommand{\fry}{\mathfrak{y}}
\newcommand{\frW}{\mathfrak{W}}
\newcommand{\frY}{\mathfrak{Y}}
\def\mathrmdef#1{\expandafter\def\csname#1\endcsname{{\rm#1}}}
\DeclareMathOperator{\id}{\mathrm{1}}
\DeclareMathOperator{\ev}{ev}
\DeclareMathOperator{\Id}{Id}
\DeclareMathOperator{\yoneda}{\mathpzc{y}}
\def\T{\mathbbm{T}}
\def\1{\mathbbm{1}}
\def\U{\mathbbm{U}}
\def\monad{(T,e,m)}
\def\imonad{(\Id,\id,\id)}
\def\umonad{(U,e,m)}
\newcommand{\categ}[1]{\mathsf{#1}}
\def\V{\categ{V}}
\def\Y{\categ{Y}}
\def\two{\categ{2}}
\def\Rext{\categ{P}}
\def\Pplus{\categ{P}_{\!\!{_+}}}
\def\Pinf{\categ{P}_{\!\!{_{\max}}}}
\def\true{\mathrm{true}}
\def\false{\mathrm{false}}
\def\Set{\categ{Set}}
\def\Rel{\categ{Rel}}
\def\Top{\categ{Top}}
\def\Ab{\categ{Ab}}
\def\Ap{\categ{App}}
\def\Ord{\categ{Ord}}
\def\Met{\categ{Met}}
\def\UMet{\categ{UMet}}
\newcommand{\Mat}[1]{#1\text{-}\categ{Mat}}
\newcommand{\Mod}[1]{#1\text{-}\categ{Mod}}
\newcommand{\Cat}[1]{#1\text{-}\categ{Cat}}
\newcommand{\fspstr}[2]{\llbracket #1,#2\rrbracket}
\newcommand{\mate}[1]{\,^\ulcorner\! #1^\urcorner}
\newcommand{\doo}[1]{\stackrel{\mbox{\tiny $\bullet$}}{#1}\; }
\newcounter{counter}
\begin{document}  
\title{Lawvere completeness in Topology}  
\author{Maria Manuel Clementino and Dirk Hofmann\thanks{The authors
acknowledge partial financial  
assistance by Centro de Matem\'atica da Universidade de
Coimbra/FCT and Unidade de Investiga\c c\~ao e Desenvolvimento
Matem\'atica e Aplica\c c\~oes da Universidade de Aveiro/FCT.}}


\maketitle  

\begin{abstract}
It is known since 1973 that Lawvere's notion of (Cauchy-)complete
enriched category is meaningful for metric spaces: it captures
exactly Cauchy-complete metric spaces. In this paper we introduce
the corresponding notion of Lawvere completeness for
$(\T,\V)$-categories and show that it has an interesting meaning for
topological spaces and quasi-uniform spaces: for the former ones
means weak sobriety while for the latter means Cauchy completeness.
Further, we show that $\V$ has a canonical $(\T,\V)$-category
structure which plays a key role: it is Lawvere-complete under
reasonable conditions on the setting; permits us to define a Yoneda
embedding in the realm of $(\T,\V)$-categories.

\vspace*{0.15cm}  

\hspace*{-\parindent}{\em  Mathematics Subject Classification (2000)}:
18A05, 18D15, 18D20, 18B35, 18C15, 54E15, 54E50.  
\vspace*{.15cm}  

\noindent {\em Key words}: $\V$-category, bimodule, monad, $(\T,\V)$-category, completeness.
\end{abstract}

\setcounter{section}{-1}

\section{Introduction}
Lawvere in his 1973 paper {\em Metric spaces, generalized logic, and
closed categories} formulates a notion of {\em complete}
$\V$-category and shows that for (generalised) metric spaces it
means Cauchy completeness. This notion of completeness deserved the
attention of the categorical community, and the notion of {\em
Cauchy-complete category}, or {\em Freyd-Karoubi complete category}
is well-known, mostly in the context of $\Ab$-enriched categories.
However, it never got the attention of the topological community. In
this paper we interpret Lawvere's completeness in topological
settings. We extend Lawvere's notion of complete $\V$-category to
the (topological) setting of $(\T,\V)$-categories (for a symmetric
and unital quantale $\V$), and show that it encompasses well-known
notions in topological categories, meaning {\em weakly sober space}
in the category of topological spaces and continuous maps, {\em
weakly sober approach space} in the category of approach spaces and
non-expansive maps, and {\em Cauchy-completeness} in the category of
quasi-uniform spaces and uniformly continuous maps.

We present also a first step towards a possible construction of
completion. Indeed, in the setting of $\V$-categories, it is
well-known that the completion of a $\V$-category may be built out
of the Yoneda embedding $X\to\V^{X^\op}$. In the $(\T,\V)$-setting,
we could prove that $\V$ has a canonical $(\T,\V)$-categorical
structure and that every $(\T,\V)$-category $X$ has a canonical dual
$X^\op$. Using this structure and the free Eilenberg-Moore algebra
structure $|X|$ on $TX$, we get two ``Yoneda-like" morphisms
\[X\to \V^{X^\op}\;\mbox{ and }\;X\to\V^{|X|}.\]
For the latter one we prove a Yoneda Lemma (see \ref{YonedaLemma}).

Furthermore, we show that, under suitable conditions, $\V$ is a
Lawvere-complete $(\T,\V)$-category, a first step towards a
completion construction which will be the subject of a forthcoming
paper.
\bigskip

In order to make the presentation of this paper smoother, in Section
1 we recall the notions and properties of $\V$-categories we will
generalize throughout. First we introduce $\V$-categories and
$\V$-bimodules, and define Lawvere-complete $\V$-categories, for a
commutative and unital quantale $\V$. $\V$ is then naturally
equipped with the $\V$-categorical structure $\hom$. We give a
direct proof of Lawvere completeness of the $\V$-category
$(V,\hom)$.

In Section 2 we describe our basic setting for the study of
$(\T,\V)$-categories and introduce them. We describe Kleisli
composition in the category $\Mat{\V}$ of $\V$-valued matrices and
define $(\T,\V)$-bimodule. Although $(\T,\V)$-bimodules do not
compose in general, one can still formulate and study the notion of
Lawvere-complete $(\T,\V)$-category.

Similarly to what was done in $\V$-categories, we define a canonical
$(\T,\V)$-categorical structure on $\V$, as the composition of
$\hom$ with the (canonical) $\T$-algebra structure on $\V$ described
by Manes in \cite{M}. This is the subject of Section 3. In addition
we also prove that, under some conditions, the $(\T,\V)$-category
$\V$ is Lawvere-complete.

In Section 1 we present the Yoneda embedding for $\V$-categories as
a subproduct of the fact that a $\V$-matrix $\psi:X\rel Y$ between
$\V$-categories $(X,a)$ and $(Y,b)$ is a $\V$-bimodule if and only
if, as a map $\psi:X^\op\otimes Y\to \V$, is a $\V$-functor (Theorem
1.5); then the monoidal-closed structure of $\Cat{\V}$ gives us the
{\em Yoneda Functor} $X\to \V^{X^\op}$. In the $(\T,\V)$-setting
this construction becomes more elaborated (see Theorem 3.3): a
$\V$-matrix $\psi:TX\rel Y$ is a $(\T,\V)$-bimodule
$\psi:(X,a)\bim(Y,b)$ if and only if both $\psi:|X|\otimes Y\to\V$
and $\psi:X^\op\otimes Y\to\V$ are $(\T,\V)$-functors. Thus, given a
$(\T,\V)$-category $(X,a)$, the $(\T,\V)$-bimodule $a:X\bim X$ gives
rise to two {\em Yoneda $(\T,\V)$-functors} $X\to \V^{X^\op}$ and
$X\to\V^{|X|}$.

In Section 5 we present the announced topological examples, with the
exception of quasi-uniform spaces, which are presented in the
Appendix, due to the fact that their presentation as lax algebras
does not fit in the $(\T,\V)$-setting (as shown in \cite{MST}).
\vspace*{5mm}

\noindent {\bf Acknowledgments.} We are grateful to Francis Borceux
for his enlightening proof of completeness of the $\V$-category
$\V$. The work on this paper was started while both authors were visiting Walter Tholen in May 2004, at York University, Canada, and benefited from further visits in January 2006 and February 2007. We thank Walter Tholen for fruitful discussions on the subject of this paper.

\section{The category of $\V$-categories}

Although the material of this section can be found essentially on
\cite{Kel_EnrCat}, we find that its inclusion here may enlighten the
corresponding -- but more technical -- notions and results for
$(\T,\V)$-categories presented in the forthcoming sections.

\subsection{$\V$.} Throughout $\V$ is a (commutative and unital) \emph{quantale}.
In other words, $\V$ is a complete lattice equipped with a symmetric
and associative tensor product $\otimes$, with unit $k$, and with
right adjoint $\hom$; that is, for each $u, v, w\in \V$,
\[u\otimes v\leq w\iff v\leq \hom(u,w).\]
Considered as a (thin) category, $\V$ is said to be {\em symmetric
monoidal-closed}. If $k$ is the bottom element $\bot$ of $\V$, then
$\V=1$ is the trivial lattice. Throughout this paper \emph{we assume
that $\V$ is non-trivial, i.e.\ $k\neq\bot$}.

Every non-trivial Heyting algebra -- with $\otimes=\wedge$ and
$k=\top$ the top element -- is an example of such a lattice, in
particular the two-element chain $\two=\{\false\models\true\}$, with
the monoidal structure given by ``$\&$'' (and) and ``$\true$". The
complete real half-line $\Rext=[0,\infty]$, with the categorical
structure induced by the relation $\geq$ (i.e., $a\to b$ means
$a\geq b$), admits several interesting monoidal structures. First of
all, with $\wedge=\max$ it is a Heyting algebra $\Pinf$. Another
possible choice for $\otimes$ is $+$; we denote $\Rext$ equipped
with this tensor by $\Pplus$. Note that in this example the right
adjoint $\hom$ is given by truncated minus:
$\hom(u,v)=\max\{v-u,0\}$.

\subsection{$\Mat{\V}$.}
The category $\Mat{\V}$ of \emph{$\V$-matrices} \cite{BCRW_VarEnr,CT} has sets as objects,
and a morphism $r:X\rel Y$ in $\Mat{\V}$ is a map $r:X\times Y\to\V$.
Composition of $\V$-matrices $r:X\rel Y$ and $s:Y\rel Z$ is defined as matrix multiplication:
\[s\cdot r(x,z)=\bigvee_{y\in Y}r(x,y)\otimes s(y,z).\]
The identity arrow $\id_{X}:X\rel X$ in $\Mat{\V}$ is the $\V$-matrix which sends all
diagonal elements $(x,x)$ to $k$ and all other elements to the bottom element $\bot$ of $\V$.
In fact, each $\Set$-map $f:X\to Y$ can be interpreted as the $\V$-matrix
\[
f:X\rel Y,\;f(x,y)=
\begin{cases}
k &\text{if $f(x)=y$,}\\
\bot &\text{else.}
\end{cases}
\]
To keep notation simple, in the sequel we will write $f:X\to Y$
rather then $f:X\rel Y$ for a $\V$-matrix induced by a map. The
formula for matrix composition becomes considerably easier if one of
the $\V$-matrices is a $\Set$-map:
\begin{align*}
s\cdot f(x,z)&=s(f(x),z), &&& g\cdot r(x,z) &=\bigvee_{y\in g^{-1}(z)}r(x,y)
\end{align*}
for maps $f:X\to Y$ and $g:Y\to Z$ and $\V$-matrices $r:X\rel Y$ and $s:Y\rel Z$.

The complete order on $\V$ induces a complete order on
$\Mat{\V}(X,Y)=\V^{X\times Y}$: for $\V$-matrices $r,r':X\rel Y$ we
define
\[
r\le r'\;:\iff\; \forall x\in X\;\forall y\in Y\;r(x,y)\le r'(x,y).
\]
The \emph{transpose} $r^\circ:Y\rel X$ of a $\V$-matrix $r:X\rel Y$
is defined by $r^\circ(y,x)=r(x,y)$. It is easy to see that
$(\,)^\circ:\Mat{V}(X,Y)\to\Mat{V}(Y,X)$ is order-preserving, and
\begin{align*}
1_{X}^\circ=1_{X}, && (s\cdot r)^\circ=r^\circ\cdot s^\circ, && {r^\circ}^\circ=r.
\end{align*}
For each $\Set$-map $f:X\to Y$ we have $\id_{X}\le f^\circ\cdot f$
and $f\cdot f^\circ\le\id_{Y}$, i.e.\ $f$ is left adjoint to
$f^\circ$ and we write $f\dashv f^\circ$. In general, given
$\V$-matrices $r:X\rel Y$ and $s:Y\rel X$, we say that $r$ is left
adjoint to $s$ (and that $s$ is right adjoint to $r$) if $\id_X\le
s\cdot r$ and $\id_Y\ge r\cdot s$.
\begin{lemma}
Let $\V$ be a quantale and $r,r':X\rel Y$ and $s,s':Y\rel X$ be
$\V$-matrices such that $r\dashv s$ and $r'\dashv s'$. Then $r\le
r'$ if and only if $s'\le s$. Consequently, if $r\le r'$ and $s\le
s'$, then $r=r'$ and $s=s'$.
\end{lemma}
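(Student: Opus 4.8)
The plan is to argue directly from the adjunction inequalities, exploiting that matrix composition is order-preserving in each variable and that $\id$ is a unit for composition. First I would prove the ``if'' direction: assume $s'\le s$ and show $r\le r'$. Starting from $r' = \id_Y\cdot r'$ and using $\id_Y\le s\cdot r$ (from $r\dashv s$), I get $r' \ge$ nothing useful directly, so instead I compose on the correct side: from $\id_X\le s'\cdot r'$ and monotonicity I obtain $r = r\cdot\id_X \le r\cdot s'\cdot r'$. Now $r\cdot s' \le r\cdot s \le \id_Y$ (using $s'\le s$ and then $r\cdot s\le\id_Y$ from $r\dashv s$), hence $r \le r\cdot s'\cdot r' \le \id_Y\cdot r' = r'$. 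The converse implication, $r\le r'\Rightarrow s'\le s$, is entirely symmetric: from $\id_Y\le r'\cdot$ — more precisely, $s' = \id_X\cdot s' \le (s\cdot r)\cdot s' = s\cdot(r\cdot s')$, wait, that needs $r\cdot s'$, so instead use $s' = s'\cdot\id_Y \le s'\cdot(r\cdot s)$ is the wrong direction too; the clean route is $s = s\cdot\id_Y \ge s\cdot r'\cdot s'$? Let me just record that one uses $\id_X\le s\cdot r\le s\cdot r'$ (by $r\le r'$) and then $s' = \id_X\cdot s'\le s\cdot r'\cdot s' \le s\cdot\id_Y = s$, using $r'\cdot s'\le\id_Y$ from $r'\dashv s'$.

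For the ``consequently'' clause, suppose $r\le r'$ and $s\le s'$. Applying the first equivalence to $r\le r'$ gives $s'\le s$; combined with $s\le s'$ this forces $s=s'$. Then $s'\le s$ together with $s\le s'$ (trivially) and the equivalence again — or simply symmetry of the roles of $(r,s)$ and $(r',s')$ — yields $r'\le r$, and with $r\le r'$ we conclude $r=r'$. Alternatively, once $s=s'$, apply the equivalence in the form ``$s'\le s$ iff $r\le r'$'' and also ``$s\le s'$ iff $r'\le r$'' to squeeze $r=r'$.

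I do not anticipate a genuine obstacle here; this is the standard uniqueness-of-adjoints argument transported to $\Mat{\V}$, and the only thing to be careful about is composing on the correct side, since composition in $\Mat{\V}$ is not symmetric and the two adjunction inequalities $\id_X\le s\cdot r$ and $\id_Y\ge r\cdot s$ live on different objects. The mild subtlety worth stating explicitly is that $\bigvee$ (hence matrix composition) is monotone in each argument because $\otimes$ preserves the order in each variable — a consequence of $\otimes$ having a right adjoint — so each of the chain inequalities above is justified. I would present the proof as two short displayed chains of inequalities, one for each implication, followed by two lines for the ``consequently'' part.
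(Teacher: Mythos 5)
Your proof is correct: both implications follow exactly the standard uniqueness-of-adjoints chains ($r\le r\cdot s'\cdot r'\le r'$ and $s'\le s\cdot r'\cdot s'\le s$), and the ``consequently'' part is handled properly by applying the equivalence with the roles of $(r,s)$ and $(r',s')$ swapped; the paper states the lemma without proof, and this is precisely the intended argument. Just clean out the exploratory false starts (``wait, \dots the wrong direction too'') before writing it up, since the final recorded chains are the ones that matter.
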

As another consequence of the lemma above we have that left and
right adjoints are uniquely determined by $s$ (respectively $r$).
Therefore we say that $r$ is left adjoint if it has a right adjoint
$s$, and likewise, $s$ is right adjoint if it has a left adjoint
$r$. In pointwise notation, we have $r\dashv s$ if and only if
\begin{align*}
& \forall x\in X\;\bigvee_{y\in Y}r(x,y)\otimes s(y,x)\ge k,\\
& \forall x\in X\;\forall y,y'\in Y\; s(y,x)\otimes r(x,y')\le
\begin{cases}
\bot & \text{if $y\neq y'$,}\\
k      & \text{if $y=y'$}
\end{cases}
\intertext{which, by symmetry of $\otimes$, is equivalent to}
& \forall x\in X\; \bigvee_{y\in Y}r(x,y)\otimes s(y,x)= k,\\
&\forall x\in X\;\forall y,y'\in Y\;(y\neq y'\;\Rw\; s(y,x)\otimes r(x,y')=\bot).
\end{align*}
Our next example shows that there exist indeed left adjoint $\V$-matrices which are not induced by $\Set$-maps.
\begin{example}
Consider a set $X$ and the Boolean algebra $\V=PX$ the powerset of
$X$. Define a $\V$-matrix $r:1\rel X$ by putting $r(\star,x)=\{x\}$
for $x\in X$. Then
\begin{align*}
r^\circ\cdot r(\star,\star)=\bigcup_{x\in X}\{x\}=X &&\text{and}&&
r\cdot r^\circ(x,y)=\{x\}\cap\{y\}=
\begin{cases}
\emptyset & \text{if $x\neq y$,}\\
\{x\} & \text{if $x=y$,}
\end{cases}
\end{align*}
hence $r\dashv r^\circ$. But $r$ is not a $\Set$-map unless $X$ has
at most one element.
\end{example}
We wish to characterise those quantales $\V$ where the class of left
adjoint $\V$-matrices coincides with the class of $\Set$-maps. In
order to do so we introduce some notation. Let $u,v\in\V$. We say
that $v$ is a \emph{$\otimes$-complement} of $u$ if
\begin{align*}
u\vee v= k &&\text{and}&& u\otimes v=\bot.
\end{align*}
Clearly, each $u\in\V$ has at most one $\otimes$-complement.
Moreover, if $u$ is $\otimes$-complemented (i.e. has a
$\otimes$-complement $v$), then
\[
u = u\otimes k = u\otimes(u\vee v) = (u\otimes u)\vee(u\otimes v) = u\otimes u,
\]
that is, $u$ is idempotent. Our next result generalises \cite[2.14]{FS_CatAll}.
\begin{proposition}
Let $\V$ be a quantale. Each left adjoint $\V$-matrix is a
$\Set$-map if and only if $k$ and $\bot$ are the only
$\otimes$-complemented elements of $\V$ and
\[
\forall u,v\in\V\;(u\otimes v=k\;\;\Rw\;\; u=k=v).
\]
\end{proposition}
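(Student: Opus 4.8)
The plan is to translate the statement into the pointwise language recorded just before it. Recall that a $\V$-matrix $r:X\rel Y$ is left adjoint, say $r\dashv s$ with $s:Y\rel X$, if and only if for every $x\in X$ one has $\bigvee_{y\in Y}r(x,y)\otimes s(y,x)=k$ and, for all $y\neq y'$ in $Y$, $s(y,x)\otimes r(x,y')=\bot$; and that $r$ is (the $\V$-matrix of) a $\Set$-map precisely when in each row $x$ exactly one entry of $r$ equals $k$ and all the others equal $\bot$.

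For the implication ``$\Leftarrow$'', assume the two displayed conditions and take $r\dashv s$. Fixing $x\in X$, I would set $b_y:=r(x,y)\otimes s(y,x)$ and first observe that each $b_y$ is $\otimes$-complemented, a complement being $c_y:=\bigvee_{y'\neq y}b_{y'}$: indeed $b_y\vee c_y=\bigvee_{y'\in Y}b_{y'}=k$, while $b_y\otimes c_y=\bigvee_{y'\neq y}r(x,y)\otimes\bigl(s(y,x)\otimes r(x,y')\bigr)\otimes s(y',x)=\bot$ because $s(y,x)\otimes r(x,y')=\bot$ for $y'\neq y$ (this uses distributivity of $\otimes$ over arbitrary joins, so that $Y$ may be infinite). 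Hence $b_y\in\{k,\bot\}$ for every $y$. Since $\bigvee_y b_y=k\neq\bot$, there is some $y_0$ with $b_{y_0}=k$, and the second hypothesis forces $r(x,y_0)=k$ and $s(y_0,x)=k$. Finally, for $y\neq y_0$ the adjunction gives $s(y_0,x)\otimes r(x,y)=\bot$, i.e.\ $r(x,y)=k\otimes r(x,y)=\bot$. Thus in the row $x$ the value $k$ is attained exactly at $y_0$ and nowhere else, so $x\mapsto y_0$ is a well-defined map $f:X\to Y$ with $r=f$.

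For ``$\Rightarrow$'' I would argue by contraposition, exhibiting in each of the two bad cases a left adjoint $\V$-matrix that is not a $\Set$-map. Suppose first that some $u\notin\{k,\bot\}$ has a $\otimes$-complement $v$; then $v\notin\{k,\bot\}$ as well and, as observed in the excerpt, both $u$ and $v$ are idempotent. Take $X=1$, $Y=\{1,2\}$ and put $r(\star,1)=u$, $r(\star,2)=v$, $s(1,\star)=u$, $s(2,\star)=v$; a direct check using $u\otimes u=u$, $v\otimes v=v$, $u\vee v=k$ and $u\otimes v=\bot$ shows $r\dashv s$, and $r$ is not a $\Set$-map since $r(\star,1)=u$. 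Suppose instead that $u\otimes v=k$ for some $u,v$ not both equal to $k$; then one of them, say $u$, satisfies $u\neq k$ (and automatically $u\neq\bot$). Take $X=Y=1$ and $r(\star,\star)=u$, $s(\star,\star)=v$: then $u\otimes v=k$ yields $r\dashv s$, while $r$ is not a $\Set$-map because its single entry is $u\neq k$.

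I expect the ``$\Leftarrow$'' direction to be the only delicate point: the trick one must find is that each diagonal term $r(x,y)\otimes s(y,x)$ is $\otimes$-complemented, which is exactly what lets the hypothesis on $\otimes$-complemented elements bite; after that, the condition $u\otimes v=k\Rightarrow u=k=v$ does the bookkeeping that collapses each row of $r$ to a single $k$. The remaining care is routine: using the ``$=k$'' form of the adjunction identities (legitimate by symmetry of $\otimes$), keeping track of a possibly infinite $Y$, and verifying that the two small matrices built for the converse are genuinely adjoint.
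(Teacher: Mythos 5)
Your argument is correct and follows essentially the same route as the paper: the same two small matrices witness the necessity of the conditions (you merely phrase that direction contrapositively), and the key step of the sufficiency direction is the same observation that each diagonal term $r(x,y)\otimes s(y,x)$ is $\otimes$-complemented by the join of the remaining ones. The only minor divergence is at the end of the sufficiency direction, where you collapse the row directly (using $s(y_0,x)=k$ and $s(y_0,x)\otimes r(x,y)=\bot$ to get $r(x,y)=\bot$ for $y\neq y_0$), whereas the paper defines $f$, checks $f\le r$ and $f^\circ\le s$, and invokes the preceding lemma on adjoints -- a cosmetic difference.
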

\begin{proof}
Assume first that each left adjoint $\V$-matrix is a $\Set$-map.
Let $u,v\in\V$. If $u\otimes v=k$, then $u\dashv v$, and we have $u=v=k$.
Suppose that $u\vee v=k$ and $u\otimes v=\bot$. Let $X=\{u,v\}$ and define $r:1\rel X$
with $r(\star,u)=u$ and $r(\star,v)=v$. Then $r\dashv r^\circ$ and, by assumption, $u=k$ or $v=k$.\\
Let $r:X\rel Y$ and $s:Y\rel X$ be $\V$-matrices such that $r\dashv
s$. Let $x\in X$. There is some $y\in Y$ such that $r(x,y)\otimes
s(y,x)>\bot$\footnote{Since $\bot<k$. The assertion of the
proposition is trivially true if $k=\bot$}. Then
\[
k = (r(x,y)\otimes s(y,x))\vee\bigvee_{y'\neq y}(r(x,y')\otimes s(y',x))
\]
and
\begin{align*}
r(x,y)\otimes s(y,x)\otimes \bigvee_{y'\neq y}r(x,y')\otimes s(y',x)=
\bigvee_{y'\neq y}r(x,y)\otimes s(y,x)\otimes r(x,y')\otimes s(y',x)=\bot.
\end{align*}
Hence, by assumption, $r(x,y)=k=s(y,x)$ and $r(x,y')\otimes
s(y',x)=\bot$ for all $y'\neq y$. We have shown that, for each $x\in
X$, there exists exactly one $y\in Y$ with $r(x,y)=k=s(y,x)$.
Consider now $f:X\to Y$ which assigns to $x$ this unique $y$.
Clearly, $f\le r$, but also $f^\circ\le s$ since
\[
f^\circ(y,x)=k\;\;\Rightarrow\;\; f(x)=y\;\;\Rightarrow\;\; s(y,x)=k.
\]
The assertion follows now from the previous lemma.
\end{proof}

\subsection{$\V$-categories.} $\V$-enriched categories were introduced and
studied in \cite{EK_CloCat,Kel_EnrCat} in the more general context
of symmetric monoidal-closed categories. For a very nice
presentation of this material we refer to \cite{L}. In the next
subsections we recall some well-known facts about $\V$-categories,
which will serve as a guideline for our study of
$(\T,\V)$-categories.

A \emph{$\V$-enriched category} (or simply \emph{$\V$-category}) is
a pair $(X,a)$ with $X$ a set and $a:X\rel X$ a $\V$-matrix such
that
\[1_X\leq a\cdot a\;\;\mbox{ and }a\cdot a\leq a;\]
equivalently, the map $a:X\times X\to\V$ satisfies the following
conditions:
\begin{itemize}
\item[(R)] for each $x\in X$, $k\leq a(x,x)$;
\item[(T)] for each $x,x',x''\in X$, $a(x,x')\otimes a(x',x'')\leq a(x,x'')$.
\end{itemize}
Given $\V$-categories $(X,a)$ and $(Y,b)$, a \emph{$\V$-functor} $f:(X,a)\to(Y,b)$
is a map $f:X\to Y$ such that, for each $x,x'\in X$, $a(x,x')\leq b(f(x),f(x'))$.
$\V$-categories and $\V$-functors are the objects and morphisms of the category $\Cat{\V}$.
Finally, given a $\V$-category $X=(X,a)$, the \emph{dual category} $X^\op$ of $X$ is defined by $X^\op=(X,a^\circ)$.

We remark that $\Cat{\V}$ is actually a \emph{closed category} since
the tensor product on $\V$ can be naturally transported to
$\Cat{\V}$. More precisely, for $\V$-categories $X=(X,a)$ and
$Y=(Y,b)$, we put $X\otimes Y=(X\times Y,a\otimes b)$ where
$a\otimes b((x,y),(x',y'))=a(x,x')\otimes b(y,y')$ for all $x,x'\in
X$ and $y,y'\in Y$. Then, for each $\V$-category $X=(X,a)$, the
functor $X\otimes\_:\Cat{\V}\to\Cat{\V}$ has a right adjoint $\_^X$
defined by $Y^X=(\Cat{\V}(X,Y),d)$ with $d(f,g)=\bigwedge_{x\in
X}a(f(x),g(x))$.

Being monoidal-closed, $\V$ has a natural structure as $\V$-category:
\[\hom:\V\rel\V.\]
Indeed, for $u,v,w\in\V$,
\[k\otimes v=v\;\Rightarrow\;k\leq\hom(v,v),\]
\[u\otimes(\hom(u,v)\otimes\hom(v,w))\leq v\otimes\hom(v,w)\leq
w\;\Rightarrow\; \hom(u,v)\otimes\hom(v,w)\leq\hom(u,w),
\] that is, $\id_{\V}\leq\hom$ and $\hom\cdot\hom\leq\hom$.

For $\V=\two$, with the usual notation $x\leq x':\iff a(x,x')=\true$, axioms (R) and (T) read as
\begin{align*}
\forall x\in X\;\true\models x\leq x &&\text{and}&&
\forall x,x',x''\in X\;x\leq x'\;\&\;x'\leq x''\models x\leq x'',
\end{align*}
that is, $(X,\leq)$ is an ordered set\footnote{Note that we do
\emph{not} require $\le$ to be anti-symmetric.}. A $\two$-functor is
a map $f:(X,\leq)\to(Y,\leq)$ between ordered sets such that
\[\forall x,x'\in X\;x\leq x'\;\models\;f(x)\leq f(x');\]
that is, $f$ is a monotone map. Hence $\Cat{\two}$ is equivalent to the
category $\Ord$ of ordered sets and monotone maps.

A $\Pplus$-category is a set $X$ endowed with a map $a:X\times X\to\Pplus$ such that
\begin{align*}
\forall x\in X\;0\ge a(x,x) &&\text{and}&&
\forall x,x',x''\in X\; a(x,x')+a(x',x'')\ge a(x,x'');
\end{align*}
that is, $a:X\times X\to\Pplus$ is a (generalised) metric on $X$. A
$\Pplus$-functor is a map $f:(X,a)\to(Y,b)$ between metric spaces
satisfying the following inequality:
\[\forall x,x'\in X\;a(x,x')\geq b(f(x),f(x')),\]
which means precisely that $f$ is a non-expansive map. Therefore the
category $\Cat{\Pplus}$ coincides with the category $\Met$ of metric
spaces and non-expansive maps. (For more details, see \cite{L,CT}.)

For $\V=\Pinf$, the transitivity axiom (T) reads as
\[
\max\{a(x,x'),a(x',x'')\}\ge a(x,x''),
\]
hence the category $\Cat{\Pinf}$ coincides with the category $\UMet$
of \emph{(generalised) ultrametric spaces and non-expansive maps}.

\subsection{$\V$-bimodules.}
Given $\V$-categories $(X,a)$ and $(Y,b)$, a {\em
bimodule}\footnote{Also known as {\em profunctor} or {\em
distributor} (see \cite{Benabou, Borceux, Wood}).}
$\psi:(X,a)\bim(Y,b)$ is a $\V$-matrix $\psi:X\rel Y$ such that
$\psi\cdot a\leq\psi$ and $b\cdot\psi\leq \psi$; that is, for each
$x, x'\in X$ and $y,y'\in Y$,
\begin{align*}
a(x,x')\otimes\psi(x',y)&\leq\psi(x,y)&&\text{and}&
\psi(x,y')\otimes b(y',y)&\leq \psi(x,y).
\end{align*}
It is easy to verify that bimodules compose and that
$\V$-categorical structures are themselves bimodules. In fact, they
are the identities for the composition of bimodules, that is, for
any bimodule $\psi:(X,a)\bim(Y,b)$, $\psi\cdot a=\psi$ and
$b\cdot\psi=\psi$. Therefore, $\V$-categories and $\V$-bimodules
constitute a category, which we will denote by $\Mod{\V}$. The
category $\Mod{\V}$ inherits the bicategorical structure of
$\Mat{\V}$ via the forgetful functor $\Mod{\V}\to\Mat{\V}$.

\subsection{$\V$-functors as $\V$-bimodules.}\label{VfunctVbim}
Any $\V$-functor $f:(X,a)\to(Y,b)$ defines a pair of matrices
$f_*:(X,a)\rel(Y,b)$ and $f^*:(Y,b)\rel(X,a)$, with $f_*=b\cdot f$
and $f^*=f^\circ\cdot b$, that is $f_*(x,y)=b(f(x),y)$ and
$f^*(y,x)=b(y,f(x))$, which are in fact bimodules: for every
$x,x'\in X$ and $y,y'\in Y$,
\begin{align*}
a(x,x')\otimes f_*(x',y)&=a(x,x')\otimes b(f(x'),y)\leq
b(f(x),f(x'))\otimes b(f(x'),y)\leq b(f(x),y),\\
f_*(x,y')\otimes b(y'y)&=b(f(x),y')\otimes b(y',y)\leq
b(f(x),y),
\end{align*}
and similarly for $f^*$.

Moreover, the bimodules $f_*$ and $f^*$ form an adjunction, as we
show next. We recall first that, given bimodules
$\varphi:(X,a)\bim (Y,b)$ and $\psi:(Y,b)\bim(X,a)$, $\varphi$ is
left adjoint to $\psi$, $\varphi\dashv \psi$, if
$1_{(X,a)}\leq\psi\cdot\varphi$ and $\varphi\cdot\psi\leq
1_{(Y,b)}$, i.e. $a\leq \psi\cdot\varphi$ and
$\varphi\cdot\psi\leq b$. It is now straightforward to check that
$f_*\dashv f^*$, since, for $x,x'\in X$ and $y,y'\in Y$, the
inequality
\[a(x,x')\leq\bigvee_{y\in Y}f_*(x,y)\otimes
f^*(y,x')=\bigvee_{y\in Y} b(f(x),y)\otimes
b(y,f(x'))=b(f(x),f(x'))\] follows from $\V$-functoriality of $f$,
while
\[\bigvee_{x\in X}f^*(y,x)\otimes f_*(x,y')=\bigvee_{x\in
X}b(y,f(x))\otimes b(f(x),y')\leq b(y,y')\] follows from the
associativity axiom for $\V$-categories. A quite different connection
between functors and bimodules offers the following
\begin{theorem}
For $\V$-categories $X=(X,a)$ and $Y=(Y,b)$ and a $\V$-matrix $\psi:X\rel
Y$, the following conditions are equivalent:
\begin{enumerate}
\item[\em (i)] $\psi:X\bim Y$ is a bimodule;
\item[\em (ii)] $\psi:X^\op\otimes Y\to \V$ is a $\V$-functor.
\end{enumerate}
\end{theorem}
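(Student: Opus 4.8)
The plan is to unwind both conditions into pointwise inequalities over $\V$ and observe that they coincide. First I would make explicit what it means for $\psi$ to be a $\V$-functor $X^\op\otimes Y\to\V$: by definition of $\otimes$ on $\Cat{\V}$ and of the $\V$-category structure $\hom$ on $\V$, this says that for all $x,x'\in X$ and $y,y'\in Y$,
\[
a^\circ(x,x')\otimes b(y,y')\;=\;a(x',x)\otimes b(y,y')\;\leq\;\hom\bigl(\psi(x,y),\psi(x',y')\bigr).
\]
By the adjunction $u\otimes(-)\dashv\hom(u,-)$ defining the quantale, this is equivalent to
\[
a(x',x)\otimes b(y,y')\otimes\psi(x,y)\;\leq\;\psi(x',y')
\]
for all $x,x',y,y'$.

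Next I would record the pointwise form of condition (i): $\psi\cdot a\le\psi$ unwinds (using commutativity of $\otimes$) to $a(x',x)\otimes\psi(x,y)\le\psi(x',y)$ for all $x,x',y$, and $b\cdot\psi\le\psi$ unwinds to $\psi(x,y)\otimes b(y,y')\le\psi(x,y')$ for all $x,y,y'$ — these are exactly the two displayed inequalities in the definition of bimodule given earlier (with variables renamed).

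For (i)$\Rightarrow$(ii): given the two bimodule inequalities, I would chain them — apply the $a$-inequality to pass from $\psi(x,y)$ to $\psi(x',y)$ and then the $b$-inequality to pass from $\psi(x',y)$ to $\psi(x',y')$ — together with associativity of $\otimes$ to conclude $a(x',x)\otimes b(y,y')\otimes\psi(x,y)\le\psi(x',y')$, which is the functoriality inequality above. For (ii)$\Rightarrow$(i): from the single functoriality inequality I would recover each bimodule inequality by specialising the ``other'' variable to a diagonal entry and using reflexivity (R): taking $y'=y$ and using $k\le b(y,y)$ gives $a(x',x)\otimes\psi(x,y)\le a(x',x)\otimes k\otimes\psi(x,y)\le a(x',x)\otimes b(y,y)\otimes\psi(x,y)\le\psi(x',y)$; symmetrically, taking $x'=x$ and using $k\le a(x,x)$ gives the $b$-inequality.

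There is no serious obstacle here; the only point requiring a little care is the bookkeeping with the transpose and with commutativity of $\otimes$ (since $a^\circ(x,x')=a(x',x)$, the order of arguments flips), and the fact that the quantale adjunction $u\otimes v\le w\iff v\le\hom(u,w)$ is what converts the $\hom$-valued functoriality condition into the tensor inequalities. I would state the proof as: expand (ii) via the definitions of $\otimes$ on $\Cat{\V}$ and of $\hom$, transpose it across the quantale adjunction, and note that the resulting inequality is equivalent to the conjunction of the two bimodule inequalities, the forward direction by transitivity (T) and associativity, the backward direction by reflexivity (R).
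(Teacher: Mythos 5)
Your proposal is correct and takes essentially the same route as the paper's proof: expand (ii) pointwise, transpose it across the quantale adjunction $u\otimes v\le w\iff v\le\hom(u,w)$, chain the two bimodule inequalities for (i)$\Rightarrow$(ii), and recover each bimodule inequality from the functoriality inequality by inserting $k\le a(x,x)$ resp.\ $k\le b(y,y)$ for (ii)$\Rightarrow$(i). (One cosmetic remark: the forward direction uses the bimodule inequalities themselves rather than the transitivity axiom (T) of $a$ and $b$, exactly as your own detailed chaining describes.)
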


\begin{proof}
(i) $\Rw$ (ii): For $x,x'\in X$ and $y,y'\in Y$,
\[\begin{array}{rcl}
\psi(x,y)\otimes a^\circ(x,x')\otimes
b(y,y')&=&a(x',x)\otimes\psi(x,y)\otimes b(y,y')\\
&\leq&\psi(x',y)\otimes b(y,y')\\
&\leq&\psi(x',y'),\end{array}\]
hence
\[a^\circ(x,x')\otimes b(y,y')\leq\hom(\psi(x,y),\psi(x',y')).\]
(ii) $\Rw$ (i): For $x,x'\in X$ and $y,y'\in Y$,
\[\begin{array}{rcl}
a(x,x')\otimes\psi(x',y)&\leq&\psi(x',y)\otimes
a^\circ(x',x)\otimes b(y,y)\\
&\leq&\psi(x,y),
\end{array}\]
that is $a\cdot\psi\leq\psi$, and
\[\begin{array}{rcl}
\psi(x,y')\otimes b(y',y)&\leq&\psi(x,y')\otimes
a^\circ(x,x)\otimes b(y',y)\\
&\leq&\psi(x,y),\end{array}\]
that is $\psi\cdot b\leq \psi$.
\end{proof}

\begin{corollary}
There is a $\V$-functor $\ulcorner a\urcorner:X\to\V^{X^\op}$.
Moreover, for each $x\in X$ and $f\in\V^{X^\op}$, we have
\[d(a(-,x),f)=f(x).\]
\end{corollary}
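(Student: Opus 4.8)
The plan is to extract both assertions directly from Theorem 1.5. For the first part, I would apply Theorem 1.5 to the $\V$-matrix $a:X\rel X$ itself. Since $a$ is the $\V$-categorical structure on $X$, it is in particular a bimodule $a:X\bim X$ (as noted in the subsection on $\V$-bimodules: $\V$-categorical structures are the identity bimodules). By the equivalence (i) $\Leftrightarrow$ (ii) of Theorem 1.5, this means that $a:X^\op\otimes X\to\V$ is a $\V$-functor. Now I would use the adjunction $X\otimes\_\dashv\_^X$ from the subsection on $\V$-categories: transposing the $\V$-functor $a:X^\op\otimes X\to\V$ in the exponential variable yields a $\V$-functor $X\to\V^{X^\op}$, which sends $x\in X$ to the map $a(-,x):X^\op\to\V$, $x'\mapsto a(x',x)$. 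I would verify that this transpose is exactly $\ulcorner a\urcorner$ and spell out that $\V$-functoriality here amounts to the inequality $a(x,x')\le d(a(-,x),a(-,x'))=\bigwedge_{x''}\hom(a(x'',x),a(x'',x'))$, which is precisely the transitivity axiom (T) for $a$.

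For the formula $d(a(-,x),f)=f(x)$, recall that by definition $d(a(-,x),f)=\bigwedge_{x'\in X}\hom(a(x',x),f(x'))$. I would prove the two inequalities separately. For ``$\le$'': instantiating the infimum at $x'=x$ gives $d(a(-,x),f)\le\hom(a(x,x),f(x))\le\hom(k,f(x))=f(x)$, using the reflexivity axiom (R), $k\le a(x,x)$, together with antitonicity of $\hom$ in its first argument and $\hom(k,-)=\mathrm{id}$. For ``$\ge$'': since $f\in\V^{X^\op}$ is a $\V$-functor $f:X^\op\to\V$, we have $a^\circ(x',x)\le\hom(f(x'),f(x))$ for all $x'$, i.e.\ $a(x,x')\le\hom(f(x'),f(x))$; wait --- the direction I actually need is that $f$ being a $\V$-functor on $X^\op$ gives $a^\circ(x,x')\le\hom(f(x),f(x'))$, that is $a(x',x)\le\hom(f(x'),f(x))$, equivalently $a(x',x)\otimes f(x')\le f(x)$, equivalently $f(x')\le\hom(a(x',x),f(x))$ --- hmm, this yields $\hom(a(x',x),f(x))\ge f(x')$, which is not quite what I want either. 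The clean route is instead to use the characterization of $\ulcorner a\urcorner$ as an evaluation-type statement: $d(a(-,x),f)$ should be computed via the Yoneda-style argument that $a(-,x)$ is the representable and $f$ is a $\V$-functor out of $X^\op$, so $f(x')\otimes\hom(a(x',x),f(x))$... Let me just say: the reverse inequality $f(x)\le d(a(-,x),f)=\bigwedge_{x'}\hom(a(x',x),f(x'))$ is equivalent, by the adjunction $-\otimes a(x',x)\dashv\hom(a(x',x),-)$, to $f(x)\otimes a(x',x)\le f(x')$ for all $x'\in X$; and this last inequality is exactly the $\V$-functoriality condition for $f:X^\op\to\V$ applied to the pair $(x,x')$, since $a^\circ(x,x')=a(x',x)$ and $\hom(f(x),f(x'))\ge a^\circ(x,x')$ unwinds to $f(x)\otimes a(x',x)\le f(x')$.

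The main obstacle is bookkeeping rather than mathematical depth: keeping straight which variable of $a$ is which, since $X^\op$ swaps the arguments, and confirming that the transpose of the $\V$-functor $a:X^\op\otimes X\to\V$ under the closed structure is literally the map $x\mapsto a(-,x)$ (and not $x\mapsto a(x,-)$). Once the variable conventions are fixed, the $\V$-functoriality of $\ulcorner a\urcorner$ is axiom (T), and the displayed formula is a two-line computation using (R) for one direction and the defining $\V$-functoriality of elements of $\V^{X^\op}$ together with the adjunction $\otimes\dashv\hom$ for the other. I would present it concisely in that order: first identify $\ulcorner a\urcorner$ as the transpose and cite Theorem 1.5 for well-definedness, then prove the formula by the two one-line inequalities.
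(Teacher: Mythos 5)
Your proposal is correct and follows essentially the same route as the paper: existence of $\ulcorner a\urcorner$ via Theorem 1.5 plus transposition under the closed structure, the inequality $d(a(-,x),f)\le f(x)$ by instantiating at $x$ and using (R), and the reverse inequality by unwinding $\V$-functoriality of $f:X^\op\to\V$ through the $\otimes\dashv\hom$ adjunction, which is exactly the paper's chain of equivalences. The momentary hesitation about variable directions resolves to the correct statement, so no gap remains.
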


\begin{proof}
Note that
$d(a(-,x),f)=\displaystyle\bigwedge_y\;\hom(a(y,x),f(y))\leq f(x).$
On the other hand, for each $y\in Y$,
\[\begin{array}{rcl}a(y,x)\leq \hom(f(x),f(y))&\iff& f(x)\otimes a(y,x)\leq f(y)\\
&\iff&f(x)\leq\hom(a(y,x),f(y)).\end{array}\]
\end{proof}

\subsection{Lawvere-complete $\V$-categories.}\label{LawComplVCat}

\begin{definition}
A $\V$-category $(X,a)$ is said to be \emph{Lawvere-complete} if,
for any $\V$-category $(Y,b)$, for every pair of adjoint bimodules
\[\xymatrix{Y\ar@(dl,ul)^b|{\circ}\ar@/^1pc/[rr]^{\varphi}|{\circ}\ar@{}[rr]|{\bot}&&
X\ar@/^1pc/[ll]^{\psi}|{\circ}\ar@(ur,dr)^a|{\circ}}\] $\varphi$
is in the image of $(\;)_*:\Cat{\V}\to\Mod{\V}$, i.e.\ there exists a
$\V$-functor $f:(X,a)\to(Y,b)$ such that $f_*=\varphi$ and
$f^*=\psi$.
\end{definition}
It is interesting to notice that, in order to check Lawvere
completeness, we can restrict to the case $(Y,b)$ is the
$\V$-category $(1,p)$, where $1=\{\star\}$ is a singleton and
$p(\star,\star)=k$.

\begin{proposition}
For a $\V$-category $(X,a)$, the following conditions are
equivalent:
\begin{enumerate}
\item[\em (i)] $(X,a)$ is Lawvere-complete;
\item[\em (ii)] for each pair of adjoint bimodules
$(\varphi:(1,p)\bim(X,a))\dashv(\psi:(X,a)\bim(1,p))$, there exists
a $\V$-functor $f:(1,p)\to(X,a)$ such that $\varphi=f_*$ and
$\psi=f^*$ (in this situation we say that $f(\star)$ represents the
adjunction $\varphi\dashv\psi$).
\end{enumerate}
\end{proposition}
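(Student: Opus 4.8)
The plan is to prove the two implications of the equivalence. The direction (i) $\Rw$ (ii) is immediate: it is just the special case of the definition of Lawvere completeness where we take $(Y,b)=(1,p)$, so nothing needs to be done beyond observing that $(1,p)$ is a $\V$-category (which follows since $k\otimes k=k\le k$).

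For (ii) $\Rw$ (i), suppose $(X,a)$ satisfies the singleton condition, and let $(Y,b)$ be an arbitrary $\V$-category together with adjoint bimodules $\varphi:(Y,b)\bim(X,a)$ and $\psi:(X,a)\bim(Y,b)$, so $b\le\psi\cdot\varphi$ and $\varphi\cdot\psi\le a$. The idea is to work pointwise: for each $y\in Y$, restrict $\varphi$ and $\psi$ along the $\V$-functor $y:(1,p)\to(Y,b)$ (noting $b(y,y)\ge k$, so this is a $\V$-functor). Concretely, define $\varphi_y:(1,p)\bim(X,a)$ by $\varphi_y(\star,x)=\varphi(y,x)$ and $\psi_y:(X,a)\bim(1,p)$ by $\psi_y(x,\star)=\psi(x,y)$; equivalently $\varphi_y=\varphi\cdot y$ and $\psi_y=y^\circ\cdot\psi$, where I use that $y_*=b\cdot y$ and $y^*=y^\circ\cdot b$ are bimodules with $y_*\dashv y^*$, and that bimodules compose. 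Then I would check that $\varphi_y\dashv\psi_y$: the inequality $p\le\psi_y\cdot\varphi_y$ amounts to $k\le\bigvee_{x}\varphi(y,x)\otimes\psi(x,y)$, which follows from $b\le\psi\cdot\varphi$ evaluated at $(y,y)$ together with $b(y,y)\ge k$; and $\varphi_y\cdot\psi_y\le p$ amounts to $\bigvee_{x,x'}\psi(x,y)\otimes a(x,x')\otimes\varphi(y,x')\le k$, which follows from $\varphi\cdot\psi\le b$ at $(y,y)$ — but here one must also rule out that the supremum exceeds $k$, i.e. it lies in $\{w : w\le k\}$; since actually the adjointness of bimodules gives $\varphi\cdot\psi\le b$ and we have nothing forcing $b(y,y)\le k$ in a general $\V$-category, so instead I observe that $\varphi_y\cdot\psi_y=\varphi\cdot y\cdot y^\circ\cdot\psi\le\varphi\cdot b\cdot\psi=\varphi\cdot\psi\le b$ does not quite land in $(1,p)$; the correct argument is that $\varphi_y\cdot\psi_y\le\varphi\cdot\psi\le a$ restricted appropriately, hmm — let me instead use $y\cdot y^\circ\le\id_Y$ directly: $\varphi_y\cdot\psi_y=\varphi\cdot(y\cdot y^\circ)\cdot\psi\le\varphi\cdot\id_Y\cdot\psi=\varphi\cdot\psi\le b$, and then I need this as a bimodule $(1,p)\bim(1,p)$, where $p\cdot(\varphi\cdot\psi)\cdot p\le p\cdot b\cdot p$, and evaluating shows $b(y,y)$ appears, so I use $b(y,y)\otimes b(y,y)\le b(y,y)$... in fact the cleanest route is: a bimodule $(1,p)\bim(1,p)$ is a single element $w\in\V$ with $k\otimes w\otimes k\le w$ automatically, and right adjointness forces $w\le k$, so I just verify $\varphi_y\cdot\psi_y(\star,\star)\le k$ using that $\psi_y\cdot\varphi_y(\star,\star)\ge k$ plus the triangle-type manipulation already used in Section~1.5 for $f^*\cdot f_*\le b$.

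By hypothesis (ii), each adjunction $\varphi_y\dashv\psi_y$ is represented by a $\V$-functor $f_y:(1,p)\to(X,a)$; write $f(y):=f_y(\star)\in X$. This defines a map $f:Y\to X$, and the representations give $\varphi(y,x)=\varphi_y(\star,x)=(f_y)_*(\star,x)=a(f(y),x)$ and $\psi(x,y)=(f_y)^*(x,\star)=a(x,f(y))$ for all $x\in X$, $y\in Y$. It remains to show $f$ is a $\V$-functor, i.e. $b(y,y')\le a(f(y),f(y'))$. For this I would use $b\le\psi\cdot\varphi$: compute $\psi\cdot\varphi(y,y')=\bigvee_{x}\varphi(y,x)\otimes\psi(x,y')=\bigvee_x a(f(y),x)\otimes a(x,f(y'))\le a(f(y),f(y'))$ by transitivity (T), while $b(y,y')\le\psi\cdot\varphi(y,y')$; chaining gives $b(y,y')\le a(f(y),f(y'))$ as required. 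Finally, from $\varphi=a\cdot f=f_*$ and $\psi=f^\circ\cdot a=f^*$ we conclude $f_*=\varphi$, $f^*=\psi$, so $(X,a)$ is Lawvere-complete.

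The main obstacle is the bookkeeping around the pointwise restriction — specifically, verifying carefully that $\varphi_y$ and $\psi_y$ are genuine bimodules into/out of $(1,p)$ and that their composite $\varphi_y\cdot\psi_y$ is bounded by $k$ rather than by some larger element $b(y,y)$, which forces one to reuse the triangle inequality trick from Section~1.5 rather than naively substituting $b(y,y)\le k$. Everything else is a routine unwinding of the definitions of $f_*$, $f^*$, and the composition formula for $\V$-matrices.
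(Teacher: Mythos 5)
Your overall strategy is exactly the paper's (whose proof of this proposition is by the same steps as Proposition \ref{TVcomplete}): restrict $\varphi\dashv\psi$ along each point $y:(1,p)\to(Y,b)$, use hypothesis (ii) to represent each restricted adjunction, glue the representing points into a map $f:Y\to X$, and then recover functoriality and $\varphi=f_*$, $\psi=f^*$. Your gluing step, the identities $\varphi(y,x)=a(f(y),x)$ and $\psi(x,y)=a(x,f(y))$, and the functoriality argument via $b\le\psi\cdot\varphi$ together with (T) are all correct, and the (i)$\Rw$(ii) direction is indeed immediate.

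The genuine problem is in your verification that $\varphi_y\dashv\psi_y$. With the paper's conventions, the counit of this adjunction is the inequality $\varphi_y\cdot\psi_y\le a$ in $\Mat{\V}(X,X)$: it lives at the codomain $(X,a)$ of the left adjoint, not at $(1,p)$. The condition you finally announce you will prove, ``$\varphi_y\cdot\psi_y(\star,\star)\le k$'', is (after unravelling your formula, using $\psi_y\cdot a=\psi_y$) the statement $\psi_y\cdot\varphi_y\le p$, i.e.\ that the \emph{unit} is an equality. That is neither required by the definition of adjoint bimodules nor true for a general quantale --- it holds automatically in $\two$, $\Pplus$, $\Pinf$ only because there $k=\top$ --- so no ``triangle-type manipulation'' will deliver it, and chasing it is a dead end. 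Ironically, you wrote the correct and complete argument in passing and then discarded it: $\varphi_y\cdot\psi_y=\varphi\cdot(y\cdot y^\circ)\cdot\psi\le\varphi\cdot\psi\le a$, using $y\cdot y^\circ\le\id_Y$ and the counit of $\varphi\dashv\psi$, which is $\varphi\cdot\psi\le a$ (you state this correctly at the outset but later replace it twice by the type-incorrect ``$\varphi\cdot\psi\le b$''). Alternatively, as the paper does, no computation is needed at all: $y_*\dashv y^*$ and $\varphi\dashv\psi$ are adjunctions in $\Mod{\V}$, where composition is associative with the categorical structures as identities, so the composites $\varphi_y=\varphi\cdot y_*$ and $\psi_y=y^*\cdot\psi$ are automatically adjoint bimodules. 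With that one step repaired, your proof is complete and essentially coincides with the paper's.
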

\begin{proof}
It is a special case of Proposition \ref{TVcomplete}. We omit the
proof here because it follows, step by step, the proof of
Proposition \ref{TVcomplete}.
\end{proof}
\begin{theorem}
The $\V$-category $(\V,\hom)$ is Lawvere-complete.
\end{theorem}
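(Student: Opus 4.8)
The plan is to verify condition (ii) of the Proposition above: given a pair of adjoint bimodules $\varphi:(1,p)\bim(\V,\hom)$ and $\psi:(\V,\hom)\bim(1,p)$ with $\varphi\dashv\psi$, exhibit a $\V$-functor $f:(1,p)\to(\V,\hom)$ with $\varphi=f_*$ and $\psi=f^*$. Writing $\varphi(\star,v)=\varphi(v)$ and $\psi(v,\star)=\psi(v)$ for $v\in\V$, the bimodule conditions say $\hom(v,v')\otimes\varphi(v)\leq\varphi(v')$ and $\psi(v)\otimes\hom(v',v)\leq\psi(v')$ (using that $(1,p)$ contributes only the unit $k$), while adjointness reads $k\leq\bigvee_{v}\varphi(v)\otimes\psi(v)$ and $\varphi(v)\otimes\psi(v')\leq\hom(v,v')$ for all $v,v'$. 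A $\V$-functor $f:(1,p)\to(\V,\hom)$ is just an element $w:=f(\star)\in\V$, and one computes $f_*(\star,v)=\hom(w,v)$ and $f^*(v,\star)=\hom(v,w)$; so the goal is to produce $w\in\V$ with $\varphi(v)=\hom(w,v)$ and $\psi(v)=\hom(v,w)$ for every $v$.

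First I would guess the representing element. The natural candidate is $w:=\bigvee_{v}\psi(v)\otimes v$, the ``colimit'' weighted by $\psi$; dually one could try $w':=\bigwedge_v\hom(\varphi(v),v)$, and a preliminary step is to check these coincide (both should equal the value forced by the adjunction). Then the argument splits into two inequalities in each of the two identities. For $\varphi(v)\leq\hom(w,v)$, i.e. $w\otimes\varphi(v)\leq v$: expand $w\otimes\varphi(v)=\bigvee_{v'}\psi(v')\otimes v'\otimes\varphi(v)$ and use the adjointness inequality $\varphi(v)\otimes\psi(v')\leq\hom(v',v)$ together with the counit $v'\otimes\hom(v',v)\leq v$. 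For the reverse $\hom(w,v)\leq\varphi(v)$, I expect to need the unit $k\leq\bigvee_{v'}\varphi(v')\otimes\psi(v')$: write $\hom(w,v)=\hom(w,v)\otimes k\leq\bigvee_{v'}\hom(w,v)\otimes\varphi(v')\otimes\psi(v')$, note $\psi(v')\leq\hom(v',w)$ follows once the first identity for $\psi$ is in hand (so one should order the two identities carefully, or prove the $\leq$-halves of both before the $\geq$-halves), hence $\hom(w,v)\otimes\varphi(v')\otimes\psi(v')\leq \hom(v',v)\otimes\varphi(v')\leq\varphi(v)$ using the bimodule inequality for $\varphi$. The two statements for $\psi$ are handled symmetrically, swapping the roles of $\varphi$ and $\psi$ and of $w$ with the "lim" description.

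The main obstacle is the bookkeeping of which of the four inequalities ($\varphi\leq\hom(w,-)$, $\hom(w,-)\leq\varphi$, $\psi\leq\hom(-,w)$, $\hom(-,w)\leq\psi$) can be proved independently and which depend on others already being established; in particular the two "$\hom(w,-)\leq\varphi$"-type directions seem to require the complementary identity for $\psi$ (via $\psi(v')\leq\hom(v',w)$), so I would first establish the two "easy" halves $\varphi(v)\leq\hom(w,v)$ and $\psi(v)\leq\hom(v,w)$ directly from the bimodule and counit inequalities, and only then feed these back to obtain the reverse halves from the unit inequality and the bimodule inequalities. A secondary point to watch is that all suprema and the tensor interact well ($\otimes$ distributes over arbitrary joins since $\V$ is a quantale), which is what makes the expansions of $w\otimes(-)$ and $\hom(w,-)\otimes(-)$ legitimate.
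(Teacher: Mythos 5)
Your proposal is correct and takes essentially the same route as the paper: your candidate $w=\bigvee_{v}\psi(v)\otimes v$ is exactly the paper's representing element $\psi(k)$ (the paper's first step is precisely the identity $\psi(k)=\bigvee_{u}\psi(u)\otimes u$), and your four inequalities are obtained from the same unit/counit and bimodule inequalities, merely organized as ``easy halves then hard halves'' instead of the paper's three displayed equalities plus uniqueness of adjoints. The only blemish is a slip in your statement of the counit, which should read $\psi(v')\otimes\varphi(v)\leq\hom(v',v)$ (the form you in fact use later); also note that $\psi(v')\leq\hom(v',w)$ is immediate from the definition of $w$, so the ordering worry you raise does not arise.
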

\begin{proof} Although this fact can be deduced from more
general categorical results, we prefer to give here a direct
proof, which provides guidance for the more general corresponding
result for the $(\T,\V)$-categorical structure of $\V$ we will
study later.

Consider
\[\xymatrix{1\ar@(dl,ul)^p|{\circ}\ar@/^1pc/[rr]^{\varphi}|{\circ}\ar@{}[rr]|{\bot}&&
V\ar@/^1pc/[ll]^{\psi}|{\circ}\ar@(ur,dr)^{\hom}|{\circ}}\]
From the above theorem it follows that
\begin{align}\label{phiVbim}
\varphi\text{ is a bimodule}&\iff\varphi:(\V,\hom)\to(\V,\hom)\text{ is a
$\V$-functor}\\
&\iff\forall u,v\in\V\;\hom(u,v)\leq\hom(\varphi(u),\varphi(v));\notag
\end{align}
\begin{align}\label{psiVbim}
\psi\text{ is a bimodule}&\iff\psi:(\V,\hom^\op)\to(\V,\hom)\text{ is a
$\V$-functor}\\
&\iff\forall u,v\in\V\;\hom(u,v)\leq\hom(\psi(v),\psi(u));\notag
\end{align}
the conditions for the adjunction read as:
\begin{align}\label{phipsiVadj}
\varphi\dashv\psi&\iff \forall
u,v\in\V\;\;\;\psi(u)\otimes\varphi(v)\leq\hom(u,v)\hspace{1em}\&\hspace{1em}
k\leq \DS\bigvee_{u\in V}\varphi(u)\otimes\psi(u)
\end{align}
We will show that the adjunction $\varphi\dashv\psi$ is
represented by $\psi(k)$, i.e, $\varphi(v)=\hom(\psi(k),v)$ and
$\psi(v)=\hom(v,\psi(k))$, for every $v\in\V$. First we notice
that from \eqref{phipsiVadj} it follows that $\psi(k)\otimes
\varphi(v)\leq\hom(k,v)=v$, hence $\varphi(v)\leq\hom(\psi(k),v)$.
Now the proof consists of checking three equalities:

\medskip
\noindent{\bf (1st) } $\psi(k)=\DS\bigvee_{u\in\V}\psi(u)\otimes u$:
\smallskip

\noindent It is immediate that $\psi(k)=\psi(k)\otimes k\leq\DS\bigvee_{u\in
\V}\psi(u)\otimes u$, and, moreover, for every $v\in\V$,
\begin{align*}
\psi(u)\otimes u=\psi(u)\otimes\hom(k,u)&\leq \psi(u)\otimes \hom(\psi(u),\psi(k)) &\text{(by \eqref{psiVbim})}\\
&\leq\psi(k).
\end{align*}

\medskip
\noindent{\bf (2nd) } $\forall v\in\V\;\hom(v,\psi(k))= \DS\bigvee_{u\in\V}
\hom(v,u)\otimes\psi(u)$:
\smallskip

\noindent To show ``$\geq$" we just observe that
\[v\otimes(\hom(v,u)\otimes\psi(u))\leq u\otimes\psi(u)\le\psi(k);\]
for ``$\leq$", we have
\begin{align*}
\hom(v,\psi(k)) &\leq
 \hom(v,\psi(k))\otimes\DS\bigvee_{u\in V}\varphi(u)\otimes\psi(u)&\text{(by
 \eqref{phipsiVadj})}\\
 &=\DS\bigvee_{u\in V} \hom(v,\psi(k))\otimes\varphi(u)\otimes\psi(u)\\
 &\leq\DS\bigvee_{u\in V} \hom(v,\psi(k))\otimes\hom(\psi(k),u)\otimes\psi(u)&\text{(since
 $\varphi\leq\hom(\psi(k),-)$)}\\
 &\leq\DS\bigvee_{u\in V}\hom(v,u)\otimes\psi(u).
\end{align*}
\medskip
\noindent{\bf (3rd) } Since $\psi=\psi\cdot\hom$ we have $\forall v\in\V\;\psi(v)=\DS\bigvee_{u\in V}\hom(v,u)\otimes \psi(u)$.
\end{proof}

A new insight on Lawvere completeness for $\V$-categories may be
found in \cite{Tholen}.

\section{Basic properties of $(\T,\V)$-categories}

In the first part of this section we present the setting for the
study of $(\T,\V)$-categories, or (Eilenberg-Moore) lax algebras,
that can be studied in more detail in \cite{CH, CT, CH2}.

\subsection{$\T$ and its extension.}\label{Text}
Recall that a \emph{monad} $\T=\monad$ on $\Set$ consists of a functor $T:\Set\to\Set$
together with natural transformations $e:\Id_{\Set}\to T$ (unit) and $m:TT\to T$ (multiplication) such that
\begin{align*}
m\cdot Tm=m\cdot m_T &&\text{and}&& m\cdot Te=\id_T=m\cdot e_T.
\end{align*}
There are two \emph{trivial monads} on $\Set$, one sending all sets
$X$ to the terminal set $1$, and the other with
$T\varnothing=\varnothing$ and $TX=1$ for $X\neq\varnothing$. Any
other monad is called \emph{non-trivial}.

By a \emph{lax extension} of a $\Set$-monad $\T=\monad$ to $\Mat{\V}$ we mean an
extension of the endofunctor $T:\Set\to\Set$ to $\V$-matrices acting on $\Set$-maps as $T$ and satisfying
\begin{enumerate}
\item[(a)] $(Ta)^\circ= T(a^\circ)$ (and we write $Ta^\circ$),
\item[(b)] $Tb\cdot Ta\le T(b\cdot a)$,
\item[(c)] $a\le a'\;\Rw\;Ta\le Ta'$,
\item[(d)] $e_Y\cdot a\le Ta\cdot e_X$,
\item[(e)] $m_Y\cdot T^2a\le Ta\cdot m_X$,
\setcounter{counter}{\value{enumi}}
\end{enumerate}
for all $a,a':X\rel Y$ and $b:Y\rel Z$ \footnote{The conditions for
our extension are stronger than Seal's in \cite{SEAL_LaxAlg}.}. Note
that we have automatically equality in (b) if $a=f$ is a $\Set$-map.
A $\Set$-monad $\T=\monad$ admitting a lax extension to $\Mat{\V}$
is called {\em $\V$-admissible}. Although $\T$ may have many lax
extensions to $\Mat{\V}$, in the sequel we usually have a fixed
extension in mind when talking about a $\V$-admissible monad.
Trivially, the identity monad $\1=\imonad$ on $\Set$ can be extended
to the identity monad on $\Mat{\V}$. In \cite{B} M.\ Barr shows how
to extend $\Set$-monads to $\Rel=\Mat{\two}$: first observe that
each relation $r:X\rel Y$ can be written as $r=p\cdot q^\circ$ where
$q:R\to X$ and $p:R\to Y$ are the projection maps, then put
$Tr=Tp\cdot Tq^\circ$. All conditions above but the second one are
satisfied, and this extension satisfies (b) if and only if the
$\Set$-functor $T$ has (BC) (that is, sends pullbacks to weak
pullbacks). In \cite{CH2} we showed how to make the step from $\Rel$
to $\Mat{\V}$, provided that in addition $\V$ is
\emph{constructively completely distributive (ccd)}\footnote{Recall
that a lattice $Y$ is (ccd) if $\bigvee:\two^{Y^\op}\to Y$ has a
left adjoint; for more details see \cite{Wood_ccd}.}. Given a monad
$\T=\monad$ and a $\V$-matrix $a:X\rel Y$, we define relations
$a_v:X\rel Y$ ($v\in\V$) by $a_v(x,y)=\true \iff a(x,y)\ge v$ and
put, for $\x\in TX$ and $\y\in TY$,
\[
Ta(\x,\y)=\bigvee\{v\in\V\mid Ta_v(\x,\y)=\true\}.
\]
Then the formula above defines an extension of $T:\Set\to\Set$
provided that either $k=\top$ or $T\varnothing=\varnothing$.
Moreover, all five conditions above are satisfied. In addition we
have
\begin{enumerate}
\item[(f)] $Tb\cdot Ta= T(b\cdot a)$ provided that $\otimes=\wedge$,
\item[(g)] $Tg\cdot Ta = T(g\cdot a)$,
\end{enumerate}
for all $\V$-matrices $a:X\rel Y$ and $b:Y\rel Z$ and all maps
$g:Y\to Z$. In some occasions we will need that the ($\Set$-based)
natural transformation $m:TT\to T$ has (BC) (that is, each
naturality square is a weak pullback); this guarantees that $m$ is
also a (strict) natural transformation for the extension of $T$ to
$\Mat{\V}$ described above.

\subsection{$(\T,\V)$-categories.}\label{TVCat}
Let $\T=\monad$ be a $\V$-admissible monad. A
\emph{$(\T,\V)$-category} is a pair $(X,a)$ consisting of a set $X$
and a $\V$-matrix $a:TX\rel X$ such that:\[1_X\leq a\cdot
e_X\;\;\mbox{ and } a\cdot Ta\leq a\cdot m_X;\] that is, the map
$a:TX\times X\to\V$ satisfies the conditions:
\begin{itemize}
\item[(R)] for each $x\in X,\;\; k\leq a(e_X(x),x)$;
\item[(T)] for each $\xx\in T^2X$, $\x\in TX$, $x\in X,\;\;Ta(\xx,\x)\otimes a(\x,x)\leq a(m_X(\xx),x)$.
\end{itemize}
Given $(\T,\V)$-categories $(X,a)$ and $(Y,b)$, a \emph{$(\T,\V)$-functor} $f:(X,a)\to(Y,b)$
is a map $f:X\to Y$ such that, for each $\x\in TX$ and $x\in X$, $a(\x,x)\le b(Tf(\x),f(x))$.
$(\T,\V)$-categories and $(\T,\V)$-functors are the objects and morphisms of the category $\Cat{(\T,\V)}$.

Note that each Eilenberg-Moore algebra for $\T$ can be viewed as a
$(\T,\V)$-category; in fact, we have an embedding
\[
\Set^\T \hrw \Cat{(\T,\V)}.
\]
In particular, for each set $X$ we have the $(\T,\V)$-category
$(TX,m_X)$ which we denote by $|X|$.

Obviously, each $\V$-category is a $(\T,\V)$-category for $\T=\1$
the identity monad ``identically'' extended to $\Mat{\V}$. A further
class of interesting examples involves the ultrafilter monad
$\U=\umonad$. The extension of $U:\Set\to\Set$ to $\Mat{\V}$ of
\ref{Text} can be equivalently described by
\[
Ur(\x,\y)=\bigwedge_{(A\in\x,B\in\y)}\bigvee_{(x\in A,y\in B)} r(x,y),
\]
for all $r:X\rel Y$ in $\Mat{\V}$, $\x\in TX$ and $\y\in TY$. The
main result of \cite{B} states that $\Cat{(\U,\two)}\cong\Top$. In
\cite{CH} it is shown that $\Cat{(\U,\Pplus)}\cong\Ap$, the category
of approach spaces and non-expansive maps (see \cite{Low_ApBook} for
details.)

\subsection{The dual $(\T,\V)$-category.}\label{DualCat}
We have the canonical forgetful functor
\begin{align*}
E:\Cat{(\T,\V)} &\to \Cat{\V}, \\
(X,a) &\mapsto (X,a\cdot e_X)\\
\intertext{with left adjoint}
E^\circ:\Cat{\V} &\to \Cat{(\T,\V)}. \\
(X,a) &\mapsto (X,e_X^\circ\cdot Ta)
\end{align*}
Furthermore, (the extension of) $T$ induces an endofunctor
\begin{align*}
T:\Cat{V} & \to\Cat{V}.\\
(X,a) &\mapsto (TX,Ta)
\end{align*}
If $m$ is a (strict) natural transformation, we can represent this functor as the composite
\[
\xymatrix{ & \Cat{(\T,\V)}\ar[dr]^{M^\circ}\\
\Cat{\V}\ar[ur]^{E^\circ}\ar[rr]_T && \Cat{\V},}
\]
where $M^\circ:\Cat{(\T,\V)}\to\Cat{\V}$ is given by
$(X,a)\mapsto(TX,Ta\cdot m_X^\circ)$. In fact, given a $\V$-category
$(X,a)$, we have
\[
T(e_X^\circ\cdot Ta)\cdot m_X^\circ=Te_X^\circ\cdot T^2a\cdot
m_X^\circ =Te_X^\circ\cdot m_X^\circ\cdot Ta=Ta.
\]
The functors $M^\circ$ and $E^\circ$ are the keys to define the
\emph{dual $(\T,\V)$-category} $X^\op$ of a $(\T,\V)$-category
$X=(X,a)$: we put $X^\op=E^\circ(M^\circ(X)^\op)$. We point out that
if $X$ is a $\V$-category interpreted as a $(\T,\V)$-category, i.e.
$X=(X,e_X^\circ\cdot Ta)$ for a given $\V$-category structure
$a:X\rel X$, then
\[X^\op=E^\circ(M^\circ(E^\circ(X,a))^\op)=E^\circ((TX,Ta)^\op),\]
that is, $X^\op$ is the dual -- as a $\V$-category -- of $T(X,a)$.

Our Theorem \ref{TensorTV} shows that this is indeed a reasonable
definition.

Finally, for later use we record the following
\begin{lemma}
Let $(X,a)$ be a $\V$-category and $(X,\alpha)$ be a
$\T$-algebra. Then $(X,a\cdot\alpha)$ is a $(\T,\V)$-category if and
only if $\alpha:(TX,Ta)\to(X,a)$ is a $\V$-functor.
\end{lemma}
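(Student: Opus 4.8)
The plan is to verify the defining conditions (R) and (T) for the $(\T,\V)$-category $(X,a\cdot\alpha)$ directly, using that $\alpha:(X,\alpha)$ is an Eilenberg--Moore algebra (so $\alpha\cdot e_X=\id_X$ and $\alpha\cdot T\alpha=\alpha\cdot m_X$) and that $(X,a)$ is a $\V$-category (so $\id_X\le a\cdot a$ and $a\cdot a\le a$, equivalently the pointwise axioms (R), (T) for $\V$-categories). The structure $b:=a\cdot\alpha:TX\rel X$ sends $(\x,x)$ to $a(\alpha(\x),x)$, since $\alpha$ is a $\Set$-map and matrix composition with a map simplifies as recorded in \ref{VfunctVbim} and the $\Mat{\V}$ subsection.

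First I would prove the reflexivity axiom (R) for $(X,b)$, which requires $k\le b(e_X(x),x)=a(\alpha(e_X(x)),x)=a(x,x)$; this is immediate from (R) for the $\V$-category $(X,a)$, and uses \emph{only} $\alpha\cdot e_X=\id_X$, not $\V$-functoriality of $\alpha$. So the content of the lemma lies entirely in the transitivity axiom. For that, take $\xx\in T^2X$, $\x\in TX$, $x\in X$ and unwind: the left-hand side of (T) is $Tb(\xx,\x)\otimes b(\x,x)$, where $Tb=T(a\cdot\alpha)=Ta\cdot T\alpha$ by condition (g) of \ref{Text} (or by (b) with equality since $\alpha$ is a map), so $Tb(\xx,\x)=Ta(T\alpha(\xx),\x)$; meanwhile $b(\x,x)=a(\alpha(\x),x)$ and the right-hand side is $b(m_X(\xx),x)=a(\alpha(m_X(\xx)),x)=a(\alpha(T\alpha(\xx)),x)$ using the algebra law $\alpha\cdot m_X=\alpha\cdot T\alpha$. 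Writing $\yy:=T\alpha(\xx)\in TX$, the required inequality becomes
\[
Ta(\yy,\x)\otimes a(\alpha(\x),x)\le a(\alpha(\yy),x).
\]
Now $\V$-functoriality of $\alpha:(TX,Ta)\to(X,a)$ says exactly $Ta(\yy,\x)\le a(\alpha(\yy),\alpha(\x))$; combining with the $\V$-category transitivity (T) for $(X,a)$ applied to the triple $\alpha(\yy),\alpha(\x),x$ gives the inequality. This establishes that if $\alpha$ is a $\V$-functor then $(X,a\cdot\alpha)$ is a $(\T,\V)$-category.

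For the converse, assume $(X,a\cdot\alpha)$ is a $(\T,\V)$-category and deduce $Ta(\yy,\x)\le a(\alpha(\yy),\alpha(\x))$ for all $\yy,\x\in TX$. The idea is to feed the transitivity axiom (T) for $(X,b)$ a well-chosen instance: one wants $\xx\in T^2X$ with $T\alpha(\xx)=\yy$, $\x\in TX$, and $x=\alpha(\x)\in X$, so that the conclusion $Ta(\yy,\x)\otimes a(\alpha(\x),\alpha(\x))\le a(\alpha(\yy),\alpha(\x))$ drops the middle factor via $k\le a(\alpha(\x),\alpha(\x))$ (reflexivity of $(X,a)$) and the tensor unit law. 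To produce such an $\xx$, use that $T\alpha:T^2X\to TX$ is split epi: the unit $e_{TX}:TX\to T^2X$ satisfies $T\alpha\cdot e_{TX}=e_X\circ\alpha$ by naturality of $e$, which is not quite $\yy$; instead take $\xx:=(Te_X)(\yy)$, so $T\alpha(\xx)=T(\alpha\cdot e_X)(\yy)=T(\id_X)(\yy)=\yy$ using the algebra unit law inside $T$. With this choice the transitivity axiom for $b$ directly yields the desired inequality, completing the proof. The only mild subtlety is checking that the pointwise form of $Tb=Ta\cdot T\alpha$ holds for the chosen extension — this is condition (g) of \ref{Text}, valid in the general admissible setting because $\alpha$ is a $\Set$-map — so I do not expect a genuine obstacle here; the lemma is essentially a bookkeeping exercise once the defining axioms are unwound.
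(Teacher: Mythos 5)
Your argument is correct and is essentially the pointwise transcription of the paper's own matrix calculation: the same use of $T(a\cdot\alpha)=Ta\cdot T\alpha$, of reflexivity of $a$ to absorb the middle factor, and, in the converse, of the choice $\xx=Te_X(\yy)$ together with $m_X\cdot Te_X=\id$ (the paper instead whiskers the transitivity inequality with $Te_X$, which is the same step). The only nitpick is the citation: condition (g) concerns a $\Set$-map composed on the \emph{outside}, so the right justification for $T(a\cdot\alpha)=Ta\cdot T\alpha$ is your alternative one, namely the remark that (b) holds with equality when the inner factor is a $\Set$-map.
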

\begin{proof}
First we remark that from $\id_X\leq a$ and $\id_X=\alpha\cdot e_X$ it
follows that $\id_X\leq(a\cdot\alpha)\cdot e_X$, that is
$a\cdot\alpha$ always fulfils the reflexivity axiom. Now, if
$\alpha$ is a $\V$-functor, i.e.\ $\alpha\cdot Ta\leq a\cdot \alpha$,
then
\[(a\cdot\alpha)\cdot T(a\cdot \alpha)=a\cdot\alpha\cdot Ta\cdot T\alpha\leq
a\cdot a\cdot \alpha\cdot T\alpha\leq (a\cdot \alpha)\cdot m_X.\]
Conversely, if $a\cdot\alpha$ is a $(\T,\V)$-categorical
structure, then
\[\alpha\cdot Ta=\alpha\cdot Ta\cdot T\alpha\cdot Te_X\leq
a\cdot\alpha\cdot Ta\cdot T\alpha\cdot Te_X\leq a\cdot\alpha\cdot
m_X\cdot Te_X=a\cdot\alpha.\qedhere\]
\end{proof}

\subsection{Kleisli composition.}\label{KleisliComp}
Many notions and techniques can be transported from $\Cat{\V}$ to
$\Cat{(\T,\V)}$ by formally replacing composition of $\V$-matrices
by \emph{Kleisli composition} (see \cite{Hof_Quot}) defined as
\begin{gather*}
b\ast a:=b\cdot Ta\cdot m_X^\circ,\\
\xymatrix{TX\ar[dd]|{\object@{|}}^a & TY\ar[dd]|{\object@{|}}^b & & &
TX\ar[r]|{\object@{|}}^{m_X^\circ} \ar[rdd]|{\object@{|}}_{b\ast a} & TTX\ar[d]|{\object@{|}}^{Ta}\\
&& \ar@{|->}[r] &  && TY\ar[d]|{\object@{|}}^b\\
Y & Z &&&& Z}
\end{gather*}
for all $a:TX\rel Y$ and $b:TY\rel Z$ in $\Mat{\V}$. The matrix
$e_X^\circ:TX\rel X$ acts as a lax identity for this composition, in
the following sense:
\begin{align*}
a\ast e_X^\circ&=a &\text{and}&& e_X^\circ\ast b&\ge b,
\end{align*}
for $a:TX\rel Y$ and $b:TY\rel X$. Moreover,
\[
c\ast(b\ast a)\le(c\ast b)\ast a
\]
if $T:\Mat{\V}\to\Mat{\V}$ preserves composition, and
\[
c\ast(b\ast a)\ge(c\ast b)\ast a
\]
if $m:TT\to T$ is a (strict) natural transformation.

\subsection{$(\T,\V)$-bimodules.}
Given $(\T,\V)$-categories $(X,a)$ and $(Y,b)$, a
$(\T,\V)$-\emph{bimodule} (or simply a {\em bimodule})
$\psi:(X,a)\bim(Y,b)$ is a $\V$-matrix $\psi:TX\rel Y$ such that
$\psi\ast a\leq\psi$ and $b\ast\psi\leq\psi$. This means that
$\psi\cdot Ta\cdot m_X^\circ\leq\psi$ and $b\cdot T\psi\cdot
m_X^\circ\leq \psi$; that is, for $\xx\in T^2X$, $\x\in TX$, $\y\in
TY$ and $y\in Y$,
\[Ta(\xx,\x)\otimes\psi(\x,y)\leq\psi(m_X(\xx),y),\]
\[T\psi(\xx,\y)\otimes b(\y,y)\leq\psi(m_X(\xx),y).\]
Whenever the Kleisli composition is associative (in particular if
$T:\Mat{\V}\to\Mat{\V}$ is a functor and $m$ is a natural
transformation: see \cite{Hof_Quot}), bimodules compose. The
identities for the composition law are again the
$(\T,\V)$-categorical structures, and we can consider the category
$\Mod{(\T,\V)}$ of $(\T,\V)$-categories and $(\T,\V)$-bimodules.

\subsection{$(\T,\V)$-functors as $(\T,\V)$-bimodules.}\label{TVfunctTVbim}
Analogously to the situation in $\V$-categories, each
$(\T,\V)$-functor $f:(X,a)\to(Y,b)$ defines a pair of bimodules
$f_*:(X,a)\bim(Y,b)$ and $f^*:(Y,b)\bim(X,a)$ as indicated in the
following diagram
\[
\xymatrix{TX\ar[r]^{Tf}\ar[d]|-{\object@{|}}_a\ar@/^{4pc}/[rd]^{f_*}&
TY\ar[d]|-{\object@{|}}^b\ar@/_4pc/[dl]_{f^*}\\
X\ar[r]^f&Y}
\]
that is, $f_*:=b\cdot Tf$ and $f^*:=f^\circ \cdot b$. In fact, the
following assertions are equivalent for $(\T,\V)$-categories $(X,a)$
and $(Y,b)$ and a function $f:X\to Y$.
\begin{enumerate}
\item[(i)] $f:(X,a)\to(Y,b)$ is a $(\T,\V)$-functor.
\item[(ii)] $f_*:(X,a)\bim(Y,b)$ is a $(\T,\V)$-bimodule.
\item[(iii)] $f^*:(Y,b)\bim(X,a)$ is a $(\T,\V)$-bimodule.
\end{enumerate}
We point out that, although in general bimodules do not compose,
if $f:(X,a)\to(Y,b)$ is a functor, then, for any bimodules
$\varphi:(Y,b)\bim(Z,c)$ and $\psi:(Z,c)\bim(Y,b)$,
\[\xymatrix{X\ar@(ld,lu)|{\circ}^a\ar[rr]|f\ar@/^1pc/[rr]^{f_*}|{\circ}&&
Y\ar@(ul,ur)|{\circ}^b\ar@/^1pc/[rr]|{\circ}^{\varphi}\ar@/^1pc/[ll]|{\circ}^{f^*}&&
Z\ar@(ru,rd)|{\circ}^c\ar@/^1pc/[ll]|{\circ}^{\psi}}\]
$\varphi\ast f_*$ and $f^*\ast\psi$ are bimodules, as we show
next. First note that
\begin{align*}
\varphi\ast f_*&=\varphi\cdot Tf &&\text{and}&& f^*\ast\psi=f^\circ\cdot\psi.
\end{align*}
The latter equality follows from
\[
f^*\ast\psi=f^\circ\cdot b\cdot T\psi\cdot m_Z^\circ=f^\circ\cdot\psi,
\]
and $\V$-functoriality of $f$ implies
\begin{align*}
\varphi\ast f_*=\varphi\ast(b\cdot Tf)&=\varphi\cdot T(b\cdot Tf)\cdot m_X^\circ\\
&\geq\varphi\cdot T(f\cdot a)\cdot m_X^\circ &\text{(by functoriality of $f$)}\\
&\geq\varphi\cdot Tf\cdot Ta\cdot m_X^\circ\\
&\geq\varphi\cdot Tf\cdot Ta\cdot Te_X&\text{(since $m_X^\circ\geq Te_X$)}\\
&\geq\varphi\cdot Tf,&\text{(since $a\cdot e_X\geq 1_X$)}
\end{align*}
whereby $\varphi$ bimodule gives us
\begin{align*}
\varphi\ast f_*=\varphi\ast(b\cdot Tf)=\varphi\cdot Tb\cdot T^2f\cdot m_X^\circ
\le \varphi\cdot Tb\cdot m_Y^\circ\cdot Tf=\varphi\cdot Tf.
\end{align*}
The bimodule properties of $\varphi\ast f_*$ and $f^*\ast\psi$ follow now from
\begin{align*}
c\ast(\varphi\ast f_*)&=c\ast(\varphi\cdot Tf)\le(c\ast\varphi)\cdot Tf=\varphi\ast f_*,\\
(\varphi\ast f_*)\ast a&=\varphi\cdot Tf\cdot Ta\cdot m_X^\circ\le
\varphi\cdot Tb\cdot T^2f\cdot m_X^\circ\le\varphi\cdot Tb\cdot m_Y^\circ\cdot Tf=\varphi\ast f_*,\\
a\ast(f^*\ast\psi)&=a\cdot T(f^\circ\cdot \psi)\cdot m_Z^\circ\,
= a\cdot Tf^\circ\cdot T\psi\cdot m_Z^\circ
\leq f^\circ\cdot b\cdot T\psi\cdot m_Z^\circ=f^\circ\cdot\psi=f^*\ast\psi,\\
(f^*\ast\psi)\ast c&=f^\circ\cdot\psi\cdot Tc\cdot m_Z^\circ=
f^\circ\cdot(\psi\ast c)=f^\circ\cdot\psi=f^*\ast\psi.
\end{align*}
Therefore we can define the ``whiskering" functors
\[\begin{array}{rcl}
-\ast f_*:\Mod{(\T,\V)}(Y,Z)&\longrightarrow&\Mod{(\T,\V)}(X,Z),\mbox{ and}\\
\varphi&\longmapsto&\varphi\cdot Tf\vspace*{4mm}\\
f^*\ast -:\Mod{(\T,\V)}(Z,Y)&\longrightarrow&\Mod{(\T,\V)}(Z,X)\\
\psi&\longmapsto&f^\circ\cdot \psi.\end{array}\] Moreover, given a
pair of adjoint bimodules
$(\varphi:(Y,b)\bim(Z,c))\dashv(\psi:(Z,c)\bim(Y,b))$, we have
\[\varphi\ast f_*\dashv f^*\ast \psi,\]
provided that the diagram
\[\xymatrix{T^2X\ar[r]^{m_X}\ar[d]_{T^2f}&TX\ar[d]^{Tf}\\
T^2Y\ar[r]^{m_Y}&TY}\]
satisfies (BC): $(\varphi\ast f_*)\ast(f^*\ast\psi)\leq c$ is always true, since
\[(\varphi\cdot Tf)\ast(f^\circ\cdot \psi)=
\varphi\cdot Tf\cdot Tf^\circ\cdot T\psi\cdot
m_Z^\circ\leq\varphi\cdot T\psi\cdot m_Z^\circ=
\varphi\ast\psi\leq c,\]
while to conclude that $a\leq (f^*\ast\psi)\ast(\varphi\ast f_*)$ we need the condition above:
\[a\leq f^\circ\cdot b\cdot Tf\leq f^\circ\cdot \psi\cdot T\varphi\cdot m_Y^\circ\cdot Tf=
f^\circ\cdot \psi\cdot T\varphi\cdot T^2f\cdot m_X^\circ =
(f^\circ\cdot \psi)\ast(\varphi\cdot Tf).\]

\subsection{Lawvere-complete
$(\T,\V)$-categories.}\label{TVcomplete}
\begin{definition} A $(\T,\V)$-category $(X,a)$ is called
{\em Lawvere-complete} if, for each $(\T,\V)$-category $(Y,b)$ and
each pair of adjoint bimodules
\[\xymatrix{Y\ar@(dl,ul)^b|{\circ}\ar@/^1pc/[rr]^{\varphi}|{\circ}\ar@{}[rr]|{\bot}&&
X\ar@/^1pc/[ll]^{\psi}|{\circ}\ar@(dr,ur)_a|{\circ}}\]
there exists a functor $f:(Y,b)\to(X,a)$ such that $f_*=\varphi$ and
$f^*=\psi$.
\end{definition}
Analogously to the $\V$-categorical case, Lawvere completeness is
fully tested by left adjoint bimodules with domain $(1,p)$, where
$p=e_1^\circ$, hence $p(\doo{\star},\star)=k$ and $p(\x,\star)=\bot$
for $\x\neq\; \doo{\star}$ in $T1$.
\begin{proposition}
Assume that either $T1=1$ or that the ($\Set$-based) natural
transformation $m$ satisfies {\em (BC)}. Then, for a
$(\T,\V)$-category $(X,a)$, the following conditions are
equivalent:
\begin{enumerate}
\item[\em (i)] $(X,a)$ is Lawvere-complete;
\item[\em (ii)] each pair of adjoint bimodules
$\xymatrix{(1,p)\ar@/^1pc/[rr]^{\varphi}|{\circ}\ar@{}[rr]|{\bot}&&(X,a)\ar@/^1pc/[ll]^{\psi}|{\circ}}$
is induced by a functor $f:(1,p)\to(X,a)$;
\item[\em (iii)] each pair of adjoint bimodules
$\xymatrix{(1,p)\ar@/^1pc/[rr]^{\varphi}|{\circ}\ar@{}[rr]|{\bot}&&(X,a)\ar@/^1pc/[ll]^{\psi}|{\circ}}$
is induced by a map $f:1\to X$ (so that $\varphi=a\cdot Tf$ and
$\psi=f^\circ\cdot a$).
\end{enumerate}
\end{proposition}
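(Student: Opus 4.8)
The plan is to prove (i)$\Rightarrow$(ii)$\Leftrightarrow$(iii)$\Rightarrow$(i), the only substantial implication being (iii)$\Rightarrow$(i). The implication (i)$\Rightarrow$(ii) is just the instance $(Y,b)=(1,p)$ of the definition of Lawvere completeness. For (ii)$\Leftrightarrow$(iii) I would observe that a map $f\colon 1\to X$ is automatically a $(\T,\V)$-functor $(1,p)\to(X,a)$: the only nontrivial instance of the functor inequality, $p(\doo{\star},\star)\le a(Tf(\doo{\star}),f(\star))$, reads $k\le a(e_X(f(\star)),f(\star))$ and holds by (R); conversely a functor $(1,p)\to(X,a)$ is nothing but its underlying map. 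Since for such an $f$ one has $f_*=a\cdot Tf$ and $f^*=f^\circ\cdot a$ by definition, conditions (ii) and (iii) coincide.

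For (iii)$\Rightarrow$(i), fix a $(\T,\V)$-category $(Y,b)$ and a pair of adjoint bimodules $\varphi\colon(Y,b)\bim(X,a)$, $\psi\colon(X,a)\bim(Y,b)$ with $\varphi\dashv\psi$; the aim is to produce a functor $f\colon(Y,b)\to(X,a)$ with $f_*=\varphi$ and $f^*=\psi$. I would define $f$ pointwise. For $y\in Y$ let $\hat{y}\colon 1\to Y$ denote the corresponding point, a functor $(1,p)\to(Y,b)$. Under either hypothesis the square of \ref{TVfunctTVbim} attached to $\hat{y}$ -- namely $m_Y\cdot T^2\hat{y}=T\hat{y}\cdot m_1$ -- satisfies (BC): immediately if $m$ does, and because $T^21=T1=1$ lies along its top edge otherwise. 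Hence, by the whiskering results of \ref{TVfunctTVbim}, $\varphi\ast\hat{y}_*\dashv\hat{y}^*\ast\psi$ is a pair of adjoint bimodules from $(1,p)$ to $(X,a)$, so by (iii) it is induced by a map $f_y\colon 1\to X$; set $f(y):=f_y(\star)$.

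It remains to identify $f^*$ and $f_*$. For $f^*$: by \ref{TVfunctTVbim} one has $\hat{y}^*\ast\psi=\hat{y}^\circ\cdot\psi$, whose value at $(\mathfrak{x},\star)$ is $\psi(\mathfrak{x},y)$, while $f_y^*=f_y^\circ\cdot a$ takes the value $a(\mathfrak{x},f(y))$ there; comparing, $\psi(\mathfrak{x},y)=a(\mathfrak{x},f(y))$ for all $\mathfrak{x}\in TX$ and $y\in Y$, i.e.\ $\psi=f^\circ\cdot a=f^*$. Since $\psi$ is a bimodule, the equivalence of \ref{TVfunctTVbim} now forces $f\colon(Y,b)\to(X,a)$ to be a functor, so $f_*=a\cdot Tf$ is a bimodule and $f_*\dashv f^*=\psi$. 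For $\varphi=f_*$ I would squeeze $\varphi$ between two inequalities. Functoriality of $f$ gives $b\le\psi\cdot Tf$ (because $b(\mathfrak{y},y)\le a(Tf(\mathfrak{y}),f(y))=\psi(Tf(\mathfrak{y}),y)$), and then, using $\varphi\ast b=\varphi$, the equality in \ref{Text}(b) along the map $Tf$, the mate inequality $T^2f\cdot m_Y^\circ\le m_X^\circ\cdot Tf$, and $\varphi\ast\psi\le a$,
\[
\varphi=\varphi\cdot Tb\cdot m_Y^\circ\le\varphi\cdot T(\psi\cdot Tf)\cdot m_Y^\circ=\varphi\cdot T\psi\cdot T^2f\cdot m_Y^\circ\le\varphi\cdot T\psi\cdot m_X^\circ\cdot Tf=(\varphi\ast\psi)\cdot Tf\le a\cdot Tf=f_*;
\]
conversely, from $b\le\psi\ast\varphi$, $\psi=f^\circ\cdot a$ and $a\ast\varphi=\varphi$ one gets $\psi\ast\varphi=f^\circ\cdot\varphi$, hence $b\le f^\circ\cdot\varphi$, and then, using $f_*\ast b=f_*$, the identity $T(f^\circ\cdot\varphi)=Tf^\circ\cdot T\varphi$, and $Tf\cdot Tf^\circ\le\id$,
\[
f_*=a\cdot Tf\cdot Tb\cdot m_Y^\circ\le a\cdot Tf\cdot Tf^\circ\cdot T\varphi\cdot m_Y^\circ\le a\cdot T\varphi\cdot m_Y^\circ=a\ast\varphi=\varphi.
\]
So $\varphi=f_*$, and as $(Y,b)$ and $\varphi\dashv\psi$ were arbitrary, $(X,a)$ is Lawvere complete.

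The hard part is this last equality $\varphi=f_*$. In contrast with the $\V$-categorical situation, $\varphi$ cannot be recovered pointwise from its restrictions along the points $\hat{y}$ -- these only detect $\varphi$ on $T\hat{y}(T1)\subsetneq TY$ -- so one really has to obtain both inequalities from the bimodule identities and the lax-extension axioms; and the requirement that each $\varphi\ast\hat{y}_*\dashv\hat{y}^*\ast\psi$ be an honest adjunction of bimodules is exactly what makes the hypothesis ``$T1=1$ or $m$ satisfies (BC)'' necessary. A small preliminary worth isolating first is the identity $T(g^\circ\cdot c)=Tg^\circ\cdot Tc$ for any $\Set$-map $g$ and $\V$-matrix $c$, which follows quickly from \ref{Text}(a)--(c) and $Tg\dashv Tg^\circ$.
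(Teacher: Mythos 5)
Your overall route for (iii)$\Rightarrow$(i) is the same as the paper's: whisker the given adjunction along the point functors $g_y:(1,p)\to(Y,b)$ as in \ref{TVfunctTVbim}, use (iii) to get maps $f_y:1\to X$, glue them to $f:Y\to X$, read off $\psi=f^\circ\cdot a=f^*$, deduce functoriality of $f$ from $\psi$ being a bimodule, and finally identify $\varphi=f_*$. Your last step differs harmlessly from the paper: where the paper simply invokes unicity of adjoints, you prove $\varphi\le f_*$ and $f_*\le\varphi$ by explicit calculations with $\varphi\ast b=\varphi$, $a\ast\varphi=\varphi$, $T(\psi\cdot Tf)=T\psi\cdot T^2f$, $T^2f\cdot m_Y^\circ\le m_X^\circ\cdot Tf$ and the adjunction inequalities; this is correct and in fact more self-contained. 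The treatment of (i)$\Rightarrow$(ii)$\Leftrightarrow$(iii) is also fine.

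There is, however, a genuine gap in the branch $T1=1$ of the hypothesis. You justify the adjunction $\varphi\ast(g_y)_*\dashv(g_y)^*\ast\psi$ by claiming that the naturality square $m_Y\cdot T^2g_y=Tg_y\cdot m_1$ satisfies (BC) ``because $T^21=T1=1$ lies along its top edge''. That inference is invalid: with $T1=T^21=1$, (BC) for this square of $\Set$-maps says precisely that $m_Y^{-1}(Tg_y(\star))=\{T^2g_y(\star)\}$, i.e.\ $m_Y^{-1}(e_Y(y))=\{e_{TY}(e_Y(y))\}$, and this is a genuine restriction on the monad, not a consequence of $T1=1$. For instance, for the monad $TX=X^S$ with $|S|\ge 2$ one has $T1=1$, yet there are many $\frY\in T^2Y$ with $m_Y(\frY)=e_Y(y)$ besides $e_{TY}(e_Y(y))$, so the square is not a weak pullback. (It does hold for the ultrafilter monad, since $\{y\}\in e_Y(y)$ forces the fibre to be a singleton, but the proposition is stated for arbitrary admissible monads.) The paper does not argue via (BC) of this square in the $T1=1$ case; it asserts instead that the adjunction $\varphi\cdot Tg_y\dashv g_y^\circ\cdot\psi$ can be verified directly there. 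So to close your proof you must supply that direct verification when $T1=1$: the counit inequality $(\varphi\cdot Tg_y)\ast(g_y^\circ\cdot\psi)\le a$ holds unconditionally as in \ref{TVfunctTVbim}, but the unit inequality $p\le(g_y^\circ\cdot\psi)\ast(\varphi\cdot Tg_y)$, which with $T1=1$ amounts to $k\le\bigvee_{\frx\in TX}T\varphi(e_{TY}(e_Y(y)),\frx)\otimes\psi(\frx,y)$, needs an argument that does not pass through (BC) of that square.
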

\begin{proof}
(iii) $\Rightarrow$ (i): Let
$\xymatrix{(Y,b)\ar@/^1pc/[rr]^{\varphi}|{\circ}\ar@{}[rr]|{\bot}&&(X,a)\ar@/^1pc/[ll]^{\psi}|{\circ}}$
be a pair of adjoint bimodules. For each $y\in Y$, let
$g_y:(1,p)\to (Y,b)$ be the functor that picks $y$. This functor
induces a pair of adjoint bimodules $(g_y)_*\dashv (g_y)^*$,
whence we have
\[\xymatrix{1\ar@(ld,lu)^p|{\circ}\ar@/^1pc/[rr]^{(g_y)_*}|{\circ}\ar@{}|{\bot}&&
Y\ar@(ur,ul)_b|{\circ}\ar@/^1pc/[ll]^{(g_y)^*}|{\circ}\ar@/^1pc/[rr]^{\varphi}|{\circ}\ar@{}[rr]|{\bot}&&
X\ar@(rd,ru)_a|{\circ}\ar@/^1pc/[ll]^{\psi}|{\circ}}\] If $m$
satisfies (BC), we know already that $\varphi_y\dashv \psi_y$, where
$\varphi_y=\varphi\ast(g_y)_*=\varphi\cdot Tg_y$ and
$\psi_y=(g_y)^*\ast\psi=g_y^\circ\cdot \psi$. The same happens
whenever $T1=1$, as it is easily checked. By hypothesis, there
exists a map $f_y:1\to X$ such that $\varphi_y=b\cdot Tf_y$ and
$\psi_y=f_y^\circ\cdot b$. Gluing together the maps $(f_y)_{y\in Y}$
we obtain a map $f:Y\to X$. Then, for $\x\in TX$ and $y\in Y$,
\[\psi(\x,y)=\psi_y(\x,\star)=f_y^\circ\cdot a(\x,\star)=a(\x,f_y(\star))=a(\x,f(y)),\]
that is, $\psi=f^*=f^\circ\cdot a$. We can show now that $f$ is
necessarily a functor:
\[b\cdot Tf^\circ\leq b\cdot Tf^\circ\cdot Ta\cdot Te_X\leq
b\cdot Tf^\circ\cdot Ta\cdot m_X^\circ=b\ast
\psi\leq\psi=f^\circ\cdot a.\] This concludes the proof since, by
unicity of adjoints, $\varphi$ is necessarily $f_*$.
\end{proof}

\section{$\V$ as a $(\T,\V)$-category}

\subsection{The $\T$-algebra structure of $\V$.}\label{TAlgStr}
Our next goal is to explore the notions introduced in the previous
section. In particular we are aiming for results which extend known
facts about $\V$-categories (as Theorem \ref{VfunctVbim} or Theorem
\ref{LawComplVCat}). To do so, from now on \emph{we will always
assume that the extension $T:\Mat{\V}\to\Mat{\V}$ is constructed as
in \cite{CH2} and consequently we assume $\V$ to be constructively
completely distributive. Furthermore, we assume that $\T=\monad$ is
non-trivial and that $T$ and $m$ satisfy (BC).}

Under these conditions, as Manes essentially showed in \cite{M},
\begin{align*}
\xi:T\V&\longrightarrow \V\\
\x&\longmapsto\DS\bigvee\{v\in\V\mid\x\in T(\uparrow\!v)\}
\end{align*}
is a $\T$-algebra structure on $\V$, where $\uparrow\! v=\{u\in\V\mid v\leq u\}$.

There is an interesting link between this $\T$-algebra structure and the
image under the lax functor $T:\Mat{\V}\to\Mat{\V}$ of the
identity $\id_\V:\V\to\V$ considered as a matrix $i:1\rel\V$, with
$i(\star,v)=v$. Let us compute $Ti:T1\rel T\V$. We consider, for
each $v\in\V$, the relation
\begin{align*}
i_v:1\times V&\longrightarrow\two\\
(\star,u)&\longmapsto
\begin{cases}
\true &\text{if }v\leq u,\\
\false&\text{elsewhere,}
\end{cases}
\end{align*}
hence the diagram
\[
\xymatrix{1\ar[rr]|-{\object@{|}}^{i_v}\ar[rd]_{q_v^\circ}&&V,\\
&\uparrow\! v\ar[ru]_{p_v}}
\]
commutes where $p_v$ and $q_v$ are the
projections. Now, for each $\x\in T1$ and $\y\in T\V$,
\begin{align*}
Ti(\x,\y)&=\DS\bigvee\{v\in\V\mid T(i_v)(\x,\y)=\true\}\\
&=\DS\bigvee\{v\in\V\mid Tp_vTq_v^\circ (\x,\y)=\true\}\\
&=\DS\bigvee\{v\in\V\mid\exists \z\in\; \uparrow\!v\,:\,Tq_v(\z)=\x\mbox{ and }Tp_v(\z)=\y\},
\end{align*}
hence, since $T$ preserves injections and considering $Tp_v$ as an inclusion,
we can write
\begin{align*}
Ti(\x,\y)&=\DS\bigvee\{v\in\V\mid\y\in T(\uparrow\! v)\mbox{ and }Tq_v(\y)=\x\}
\leq\xi(\y),
\end{align*}
by definition of $\xi$. In particular, if $\x=Tq(\y)$, for $q:\V\to 1$, then
$Ti(\x,\y)=\xi(\y)$. Whenever $T1=1$, $Tq(\y)=\doo{\star}$ for
every $\y\in T\V$, whence
\[Ti(\doo{\star},\y)=\bigvee\{v\in\V\mid\y\in T(\uparrow\! v)\}=\xi(\y).\]
This link between the extension of $T$ and the $\T$-algebra
structure $\xi$ is more general. Whenever necessary, in the sequel
we denote the $\Set$-endofunctor $T$ by $T_{\o}$, and keep $T$ for its
extension to $\Mat{\V}$. Each $\V$-matrix $r:X\rel Y$ can be
considered also as a map $r:X\times Y\to\V$. The interplay between
$T_{\o}r$ and $Tr$ is stated in the following result, whose proof is
straightforward.
\begin{proposition}
For any $\V$-matrix $r:X\rel Y$, each $\x\in TX$ and $\y\in TY$,
\begin{align*}
Tr(\x,\y)=\bigvee_{\w\,:\;\;\substack{T_{\o}\pi_X(\w)=\x\\
T_{\o}\pi_Y(\w)=\y}} \xi\cdot T_{\o}r(\w),
\end{align*}
that is the following diagram \[ \xymatrix{T(X\times
Y)\ar[rr]^-{T_{\o}r}&&T\V\ar[d]^\xi\\
TX\times
TY\ar[u]|{\object@{|}}^{<T_{\o}\pi_X,T_{\o}\pi_Y>}\ar[rr]^-{Tr}&&\V}\]
commutes.
\end{proposition}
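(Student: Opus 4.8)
The plan is to unwind both sides of the claimed identity and show that they reduce to the same supremum, using the ``$v$-cut'' description of the extended functor recalled in \ref{Text} together with the description of $\xi$ from \ref{TAlgStr}. First I would fix a $\V$-matrix $r:X\rel Y$, regarded simultaneously as a map $r:X\times Y\to\V$, and write $\pi_X:X\times Y\to X$, $\pi_Y:X\times Y\to Y$ for the projections. The right-hand side is $\bigvee_{\w}\xi\bigl(T_\o r(\w)\bigr)$, the supremum taken over all $\w\in T(X\times Y)$ with $T_\o\pi_X(\w)=\x$ and $T_\o\pi_Y(\w)=\y$.

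The key computation is to expand $\xi\bigl(T_\o r(\w)\bigr)$ using the definition of $\xi$: it equals $\bigvee\{v\in\V\mid T_\o r(\w)\in T(\uparrow\!v)\}$, and $T_\o r(\w)\in T(\uparrow\! v)$ holds precisely when $\w\in T_\o r^{-1}(T(\uparrow\! v)) = T\bigl(r^{-1}(\uparrow\! v)\bigr)$ (using that $T=T_\o$ preserves the relevant subset, i.e.\ commutes with taking preimages of subobjects, which is where (BC) / preservation of injections enters). Now $r^{-1}(\uparrow\! v) = \{(x,y)\mid r(x,y)\ge v\} = R_v$, the domain of the relation $r_v:X\rel Y$ of \ref{Text}, with $r_v = \pi_Y^{R_v}\cdot (\pi_X^{R_v})^\circ$ where $\pi_X^{R_v}, \pi_Y^{R_v}$ are the projections $R_v\to X$, $R_v\to Y$. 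Thus for a fixed $v$, the condition ``$\exists\,\w$ over $(\x,\y)$ with $\w\in TR_v$'' is, by Barr's formula $Tr_v = T\pi_Y^{R_v}\cdot (T\pi_X^{R_v})^\circ$, exactly the statement $Tr_v(\x,\y)=\true$. Therefore
\[
\bigvee_{\substack{\w\,:\;T_\o\pi_X(\w)=\x\\ T_\o\pi_Y(\w)=\y}}\xi\cdot T_\o r(\w)
\;=\;\bigvee\{v\in\V\mid Tr_v(\x,\y)=\true\}\;=\;Tr(\x,\y),
\]
the last equality being the very definition of the extension in \ref{Text}.

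I would present the argument as a chain of these identifications, being careful about two points which are the only places requiring thought. The first is the set-theoretic bookkeeping identifying $\w$'s lying over $(\x,\y)$ and contained in $TR_v$ with the condition $Tr_v(\x,\y)=\true$: this is exactly Barr's pullback description of $Tr_v$, so it is where (BC) for $T$ is genuinely used, and I would point that out explicitly. The second, slightly subtler, point is the interchange of the two suprema --- over $\w$ and over $v$ --- which is justified because both $\xi$ and the join over the fibre are joins in the complete lattice $\V$, so no distributivity beyond completeness is needed; I would note that the supremum over $v$ can be pulled outside since membership $\w\in TR_v$ is monotone in $v$ (smaller $v$, larger $R_v$), matching the upward-closed sets $\uparrow\! v$. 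The final clause of the statement, that the displayed square of maps/matrices commutes, is just a restatement of the pointwise identity and needs no separate argument. I expect the only real obstacle to be stating the $T$-preimage identity $T_\o r^{-1}(T(\uparrow\! v))=T(r^{-1}(\uparrow\! v))$ cleanly --- it follows from $T_\o$ preserving injections (pullbacks along monos, a consequence of (BC)) applied to the pullback of $\uparrow\! v\hookrightarrow\V$ along $r:X\times Y\to\V$ --- after which everything is a formal rewriting.
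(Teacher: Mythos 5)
Your argument is correct, and it is exactly the ``straightforward'' unwinding that the paper alludes to without writing out (it states the proposition with no proof), in the same spirit as the special-case computation of $Ti$ for $i:1\rel\V$ carried out in \ref{TAlgStr}: expand $\xi\cdot T_{\o}r(\w)$ via the cuts $\uparrow\!v$, convert membership to $\w\in T(r^{-1}(\uparrow\!v))=T(R_v)$, identify the existence of such a $\w$ over $(\x,\y)$ with $Tr_v(\x,\y)=\true$, and reindex the double supremum, which recovers the defining formula of \ref{Text}. One small attribution slip worth fixing: (BC) is genuinely needed only for the preimage identity $\{\w\mid T_{\o}r(\w)\in T(\uparrow\!v)\}=T(r^{-1}(\uparrow\!v))$ (weak preservation of the pullback of $\uparrow\!v\hookrightarrow\V$ along $r$), whereas matching the $\w$'s over $(\x,\y)$ lying in $T(R_v)$ with the condition $Tr_v(\x,\y)=\true$ uses only Barr's definition $Tr_v=Tp_v\cdot Tq_v^\circ$ and functoriality of $T_{\o}$, not (BC).
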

\begin{remark}
Besides being the structure map of an Eilenberg-Moore algebra, $\xi:T\V\to\V$ satisfies also the inequalities
\begin{align*}
\otimes\cdot\langle\xi\cdot T_{\o}\pi_{1},\xi\cdot
T_{\o}\pi_{2}\rangle\le\xi\cdot
T_{\o}(\otimes)&&\text{and}&& k\cdot !\le\xi\cdot T_{\o}k.\\
\xymatrix{T(\V\times\V)\ar[rr]^-{T_{\o}(\otimes)}\ar[d]_{\langle\xi\cdot
T_{\o}\pi_{1},\xi\cdot T_{\o}\pi_{2}\rangle}\ar@{}[drr]|{\le} &&
T\V\ar[d]^\xi\\ \V\times\V\ar[rr]_-{\otimes} && \V}\notag &&&&
\xymatrix{T1\ar[d]_{!}\ar[r]^{T_{\o}k}\ar@{}[dr]|{\le} &
T\V\ar[d]^\xi\\ 1\ar[r]_k & \V}\notag
\end{align*}
Recall that we assume $Tf=T_{\o}f$ for each $\Set$-map $f:X\to Y$; this
condition requires and implies equality in the latter inequality
(see \cite{H}).
\end{remark}

\subsection{The canonical $(\T,\V)$-categorical structure of $\V$.}\label{TVCatV}
The composition of the natural $\V$-categorical and $\T$-algebra
structures of $\V$ defines an interesting structure, $\hom_{_\xi}$,
of a $(\T,\V)$-category on $\V$
\[T\V\stackrel{\hom_{_\xi}}{\bim}\V=(T\V\stackrel{\xi}{\to}\V\stackrel{\hom}{\bim}\V),\]
as we show next.
\begin{proposition}
$\xi:(T\V,T\hom)\to(\V,\hom)$ is a $\V$-functor.
\end{proposition}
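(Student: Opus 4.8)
The plan is to unfold the definition of $\V$-functoriality for $\xi:(T\V,T\hom)\to(\V,\hom)$ and reduce it to the inequality for $\xi$ applied to $T_\o(\hom)$ that was recorded in the Remark above. Concretely, I must show that for all $\x,\y\in T\V$,
\[
T\hom(\x,\y)\le\hom(\xi(\x),\xi(\y)),
\]
which by the adjunction $\otimes\dashv\hom$ is equivalent to
\[
\xi(\x)\otimes T\hom(\x,\y)\le\xi(\y).
\]
So the real task is to estimate $\xi(\x)\otimes T\hom(\x,\y)$ from above by $\xi(\y)$.

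First I would rewrite $T\hom(\x,\y)$ using the Proposition that relates $Tr$ and $T_\o r$ (applied to the matrix $\hom:\V\times\V\to\V$ viewed as a map, with $r=\hom$): namely
\[
T\hom(\x,\y)=\bigvee_{\w\,:\;\substack{T_\o\pi_1(\w)=\x\\T_\o\pi_2(\w)=\y}}\xi\cdot T_\o(\hom)(\w),
\]
the join running over $\w\in T(\V\times\V)$ lying over the pair $(\x,\y)$. Similarly, I would write $\xi(\x)=\xi\cdot T_\o\pi_1(\w)$ and, ultimately, $\xi(\y)=\xi\cdot T_\o\pi_2(\w)$ for such a $\w$. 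It therefore suffices to prove, for a fixed $\w\in T(\V\times\V)$,
\[
\xi\cdot T_\o\pi_1(\w)\;\otimes\;\xi\cdot T_\o(\hom)(\w)\;\le\;\xi\cdot T_\o\pi_2(\w),
\]
because the left-hand $\xi(\x)$ is independent of $\w$ and joins are preserved.

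Next I would recognize the left-hand side of that last inequality as an instance of the first inequality in the Remark. Indeed, consider the $\Set$-map $\langle\pi_1,\hom\rangle:\V\times\V\to\V\times\V$; the composite of the evaluation $\otimes:\V\times\V\to\V$ with this map satisfies $\otimes\cdot\langle\pi_1,\hom\rangle\le\pi_2$ pointwise, since $u\otimes\hom(u,v)\le v$ by the defining adjunction of the quantale. Applying $T_\o$ (monotone on matrices/maps into $\V$ by condition (c) of the extension, using that $T_\o f=Tf$ on $\Set$-maps) and then $\xi$, and using the compatibility $\otimes\cdot\langle\xi\cdot T_\o\pi_1,\xi\cdot T_\o\pi_2\rangle\le\xi\cdot T_\o(\otimes)$ from the Remark together with naturality of $e$/$m$ is not even needed — one only pushes $\w$ through $T_\o\langle\pi_1,\hom\rangle$ and invokes the Remark's first diagram with the components $T_\o\pi_1(\w')$ and $T_\o(\hom)(\w')=T_\o\pi_2(\w')$, where $\w'=T_\o\langle\pi_1,\hom\rangle(\w)$. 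Chasing this through, the desired inequality drops out.

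The step I expect to be the main obstacle is the bookkeeping in the second paragraph: correctly identifying $\xi(\x)$, $\xi(\y)$, and $T\hom(\x,\y)$ as values of a single lift $\w\in T(\V\times\V)$ — i.e.\ checking that the Proposition's index set of $\w$'s over $(\x,\y)$ is nonempty and that $\xi\cdot T_\o\pi_2(\w)=\xi(\y)$ holds (using $Tf=T_\o f$ and functoriality of $T_\o$). Once the problem is localized to a fixed $\w$, the inequality $u\otimes\hom(u,v)\le v$ pushed through $T_\o$ and $\xi$ via the Remark is routine, so the crux is genuinely this reduction to a pointwise statement about the $\Set$-functor $T_\o$.
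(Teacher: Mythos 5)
Your overall strategy is sound and genuinely different from the paper's: you reduce to $\xi(\x)\otimes T\hom(\x,\y)\le\xi(\y)$, expand $T\hom(\x,\y)$ as the join of $\xi\cdot T_{\o}(\hom)(\w)$ over lifts $\w\in T(\V\times\V)$ via the Proposition of Section 3.1, and then apply the Remark's inequality $\otimes\cdot\langle\xi\cdot T_{\o}\pi_1,\xi\cdot T_{\o}\pi_2\rangle\le\xi\cdot T_{\o}(\otimes)$ at $\w'=T_{\o}\langle\pi_1,\hom\rangle(\w)$ (note $T_{\o}\pi_2(\w')=T_{\o}(\hom)(\w)$, not what you wrote, but the intent is clear). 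All of that is correct. The gap is in the very last step: you are left needing $\xi\cdot T_{\o}(\otimes\cdot\langle\pi_1,\hom\rangle)(\w)\le\xi\cdot T_{\o}\pi_2(\w)$, and you justify passing from the pointwise inequality $\otimes\cdot\langle\pi_1,\hom\rangle\le\pi_2$ of maps $\V\times\V\to\V$ to this by invoking condition (c) of the extension. Condition (c) concerns the order on $\V$-matrices; a $\Set$-map $h:Z\to\V$, regarded as a matrix $Z\rel\V$, takes only the values $k$ and $\bot$, so the pointwise order of maps into $\V$ is not the matrix order, and (c) says nothing about the composite $\xi\cdot T_{\o}h$. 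What you actually need is a separate monotonicity lemma: if $h\le h'$ pointwise for maps $h,h':Z\to\V$, then $\xi\cdot T_{\o}h\le\xi\cdot T_{\o}h'$. This is true under the standing hypotheses, but it requires its own argument which your proposal does not supply — for instance, since $T$ satisfies (BC) and preserves injections, $T_{\o}h(\w)\in T(\uparrow\! v)$ forces $\w\in T(h^{-1}(\uparrow\! v))\subseteq T(h'^{-1}(\uparrow\! v))$, hence $T_{\o}h'(\w)\in T(\uparrow\! v)$ and so $\xi(T_{\o}h(\w))\le\xi(T_{\o}h'(\w))$; alternatively, encode $h$ as a matrix $Z\rel 1$, where the pointwise order \emph{is} the matrix order, and combine (c) with the Proposition of 3.1.

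It is worth noting that the paper's proof is designed precisely to avoid this issue: instead of working with maps into $\V$ and $T_{\o}$, it encodes the identity of $\V$ as the honest matrix $i:1\rel\V$, $i(\star,v)=v$, for which condition (c) and lax functoriality legitimately give $T\hom\cdot Ti\le T(\hom\cdot i)\le Ti$, and then uses the facts from 3.1 that $Ti\le\xi$ with $Ti(\z,\x)=\xi(\x)$ for $\z=Tq(\x)$. With the monotonicity lemma added, your argument goes through and is of comparable length; without it, the crucial step is unsupported.
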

\begin{proof}
We have to show that $\xi\cdot T\hom\leq\hom\cdot \xi$, or,
equivalently, $T\hom\leq\xi^\circ\cdot \hom\cdot \xi$. This means
that, for $\x,\y\in T\V$,
\[T\hom(\x,\y)\leq\hom(\xi(\x),\xi(\y)).\]
We consider again the matrix $i:1\rel V$, and compute
$1\stackrel{i}{\to}\V\stackrel{\hom}{\bim}\V$:
\[\hom\cdot i(\star,v)=\bigvee_{u\in V} i(\star,u)\otimes\hom(u,v)
=\bigvee_{u\in\V} u\otimes\hom(u,v)\leq v;\]
that is $\hom\cdot i\leq i$. Hence $T\hom\cdot Ti\leq T(\hom\cdot i)\leq Ti$, and so,
for $\x,\y\in T\V$ and $\z=Tq(\x)$ as in Section \ref{TAlgStr}, we have
\[\xi(\x)\otimes T\hom(\x,\y)=Ti(\z,\x)\otimes T\hom(\x,\y)\leq
Ti(\z,\y)\leq \xi(\y),\] and therefore
\[T\hom(\x,\y)\leq \hom(\xi(\x),\xi(\y))\] as claimed.
\end{proof}
\begin{corollary}
$(\V,\hom_{_\xi})$ is a $(\T,\V)$-category.
\end{corollary}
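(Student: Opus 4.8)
The claim is that $(\V,\hom_{_\xi})$, where $\hom_{_\xi} = \hom \cdot \xi : T\V \rel \V$, is a $(\T,\V)$-category. By the Lemma at the end of Section~\ref{DualCat} (the one characterising when $a\cdot\alpha$ is a $(\T,\V)$-structure), since $\hom:\V\rel\V$ is a $\V$-category structure and $\xi:T\V\to\V$ is a $\T$-algebra structure (Manes, \cite{M}), it suffices to verify that $\xi:(T\V,T\hom)\to(\V,\hom)$ is a $\V$-functor. But this is exactly the content of the Proposition immediately preceding this Corollary. So the plan is simply: invoke that Lemma with $a=\hom$ and $\alpha=\xi$, and feed it the Proposition just proved.

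The only thing to double-check is that the hypotheses of the Lemma are met. The Lemma requires $(X,a)$ to be a $\V$-category and $(X,\alpha)$ to be a $\T$-algebra; here $X=\V$, $a=\hom$ (a $\V$-category structure, as established in Section~\ref{VfunctVbim} via the two displayed inequalities $\id_\V\le\hom$ and $\hom\cdot\hom\le\hom$), and $\alpha=\xi$ (a $\T$-algebra structure by Manes). The Lemma's conclusion is the biconditional ``$(X,a\cdot\alpha)$ is a $(\T,\V)$-category iff $\alpha:(TX,Ta)\to(X,a)$ is a $\V$-functor''; the forward-pointing direction we need is the ``if'' part, and the hypothesis $\xi\cdot T\hom\le\hom\cdot\xi$ is precisely the Proposition.

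There is essentially no obstacle here — this Corollary is a formal consequence assembled from two results already in hand. The one place to be mildly careful is bookkeeping about which standing assumptions are active: the Lemma's proof uses only the monad equations and the lax-functoriality of $T$ on $\Mat{\V}$, both of which hold under the running hypotheses of Section~\ref{TAlgStr} (namely $\V$ is ccd, $\T$ is non-trivial, and $T,m$ satisfy (BC)), so nothing extra needs to be verified. Hence the proof reduces to a single sentence citing the Lemma of Section~\ref{DualCat} together with the preceding Proposition.
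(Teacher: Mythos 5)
Your proof is correct and is exactly the paper's argument: the authors prove the corollary by combining the Lemma of Section 2.3 (with $a=\hom$, $\alpha=\xi$) with the Proposition that $\xi:(T\V,T\hom)\to(\V,\hom)$ is a $\V$-functor. Your extra care about the Lemma's hypotheses ($\hom$ a $\V$-category structure, $\xi$ a $\T$-algebra) is sound and matches the standing assumptions of the paper.
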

\begin{proof}
Follows from the proposition above and Lemma \ref{TVCat}.
\end{proof}

\subsection{The tensor product.}\label{TensorTV}
The tensor product in $\V$ defines in a natural way a (not
necessarily closed) product structure in $\Cat{(\T,\V)}$. Given
$(\T,\V)$-categories $X=(X,a)$ and $Y=(Y,b)$, we put $X\otimes
Y=(X\times Y,a\otimes b)$ where $a\otimes
b(\w,(x,y))=a(T\pi_X(\w),x)\otimes b(T\pi_Y(\w),y)$ for all $\w\in
T(X\times Y)$, $x\in X$ and $y\in Y$. One easily verifies
reflexivity of $a\otimes b$, while transitivity holds if and only if
$\otimes\cdot\langle\xi\cdot T_{\o}\pi_{1},\xi\cdot
T_{\o}\pi_{2}\rangle=\xi\cdot T_{\o}(\otimes)$ (see Remark \ref{TAlgStr}
and \cite{H}) \emph{which we assume from now on}. We remark that
this condition guarantees that $\T$ is a {\em (lax) Hopf monad} on
$\Mat{\V}$ (see \cite{Moe_Tensor}) where the tensor product in $\V$
is naturally extended to $\Mat{\V}$. However, we will not develop
this aspect here.

It is well-known that in general the functor
$X\otimes\_:\Cat{(\T,\V)}\to\Cat{(\T,\V)}$ has no right adjoint as,
for example, $\Top$ is not Cartesian closed. The problem of
characterising those $(\T,\V)$-categories $X=(X,a)$ such that
tensoring with $X$ has a right adjoint is studied in \cite{H}.

\begin{theorem}
Let $m$ be a natural transformation. For $(\T,\V)$-categories
$(X,a)$ and $(Y,b)$ and a $\V$-matrix $\psi:TX\rel Y$, the following
assertions are equivalent.
\begin{enumerate}
\item[\em (i)] $\psi:(X,a)\bim(Y,b)$ is a $(\T,\V)$-bimodule.
\item[\em (ii)] Both $\psi:|X|\otimes Y\to\V$ and $\psi:X^\op\otimes Y\to\V$ are $(\T,\V)$-functors.
\end{enumerate}
\end{theorem}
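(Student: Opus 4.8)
The plan is to reduce this to the $\V$-categorical Theorem~1.5 already proved, by carefully unfolding what the two $(\T,\V)$-functoriality conditions say and matching them against the two bimodule inequalities $\psi\ast a\leq\psi$ and $b\ast\psi\leq\psi$. Recall that the $(\T,\V)$-categorical structure on $|X|=(TX,m_X)$ is $m_X:T^2X\rel TX$, and that on $X^\op$ is (up to the identifications in Section~2.3) built from $Ta\cdot m_X^\circ$; the structure on $\V$ that makes these statements about functors is $\hom_{_\xi}=\hom\cdot\xi$. So first I would write out the domain structures explicitly. For $|X|\otimes Y$ the structure on $T X\times Y$ evaluated at $\w\in T(TX\times Y)$ against $(\x,y)$ is $m_X(T\pi_{TX}(\w),\x)\otimes b(T\pi_Y(\w),y)$; for $X^\op\otimes Y$ the relevant structure at $\w'\in T(X\times Y)$ against $(x,y)$ is (the $X^\op$-component)$\,\otimes\,b(T\pi_Y(\w'),y)$. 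Then I would write out what ``$\psi$ is a $(\T,\V)$-functor into $(\V,\hom_{_\xi})$'' means: for the first, $(\text{structure of }|X|\otimes Y)(\w,(\x,y))\leq \hom(\xi(T\psi(\w)),\psi(\x,y))$, i.e.\ $\xi(T\psi(\w))\otimes m_X(\dots)\otimes b(\dots)\leq\psi(\x,y)$; and similarly for the second, with $X^\op$ in place of $|X|$.

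The key technical device I would use is Proposition~3.2 (the ``commuting diagram'' for $Tr$ versus $T_\o r$), which says $Tr(\x',\y')=\bigvee_{\w}\xi\cdot T_\o r(\w)$ over $\w$ with $T_\o\pi_X(\w)=\x'$, $T_\o\pi_Y(\w)=\y'$. Applied to $r=\psi:TX\times Y\to\V$, this expresses $T\psi:T(TX)\times TY\to\V$ — which is exactly what appears in the bimodule condition $b\ast\psi=b\cdot T\psi\cdot m_X^\circ\leq\psi$ — as a supremum of $\xi\cdot T_\o\psi$ terms. That is precisely the same $\xi\cdot T_\o\psi(\w)$ that shows up when one evaluates $\psi$ (as a map) on $T$ of the object $|X|\otimes Y$ or $X^\op\otimes Y$: the functoriality inequality $\text{(str)}\leq\hom(\xi(T_\o\psi(\w)),\psi(\dots))$ is, after transposing via the adjunction $\otimes\dashv\hom$, the inequality $\xi(T_\o\psi(\w))\otimes\text{(str)}\leq\psi(\dots)$, and taking the supremum over the appropriate $\w$ converts $\xi(T_\o\psi(\w))$ into $T\psi$ and so yields $b\cdot T\psi\cdot m_X^\circ\leq\psi$, i.e.\ $b\ast\psi\leq\psi$. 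Conversely, a single $\w$ in that supremum gives one functoriality inequality, so the equivalence goes both ways. The two halves of the bimodule definition split cleanly: the $Y$-side inequality $b\ast\psi\leq\psi$ should correspond to (the $Y$-part of) both functoriality statements, while the $X$-side inequality $\psi\ast a=\psi\cdot Ta\cdot m_X^\circ\leq\psi$ should correspond to the difference between the $|X|$-structure (which carries $m_X$) and the $X^\op$-structure (which carries $Ta\cdot m_X^\circ$) — this is exactly why \emph{two} functors are needed rather than one, mirroring how in the $\V$-case Theorem~1.5 packaged both one-sided module conditions into a single functor $X^\op\otimes Y\to\V$.

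Concretely I would organize the proof as: (i) $\Rightarrow$ (ii): assume $\psi\ast a\leq\psi$ and $b\ast\psi\leq\psi$; verify $\psi:|X|\otimes Y\to\V$ is a functor by checking $\xi(T\psi(\w))\otimes m_X(T\pi_{TX}\w,\x)\otimes b(T\pi_Y\w,y)\leq\psi(\x,y)$, using first that $b\ast\psi\leq\psi$ handles the $\xi(T\psi(\cdot))\otimes b(\cdot)$ portion pushed along $m_X$, chasing through Proposition~3.2; then verify $\psi:X^\op\otimes Y\to\V$ is a functor by the analogous computation where the $X^\op$-structure is unfolded via Section~2.3 and reflexivity/transitivity of $a$ together with $\psi\ast a\leq\psi$ is used. (ii) $\Rightarrow$ (i): from functoriality of $\psi:|X|\otimes Y\to\V$, instantiate at well-chosen $\w$ (those arising as $T$ of a pairing, via Proposition~3.2 and the fact that $m$ is natural so $m_X^\circ$ behaves well) to extract $b\ast\psi\leq\psi$; from functoriality of $\psi:X^\op\otimes Y\to\V$, similarly extract $\psi\ast a\leq\psi$. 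The main obstacle I anticipate is the $X^\op$ side: the dual structure $X^\op$ is defined somewhat indirectly through $E^\circ$ and $M^\circ$ (Section~2.3), so correctly computing the structure of $X^\op\otimes Y$ on $T(X\times Y)$ and reconciling the $m_X^\circ$'s and transposes with Proposition~3.2 — keeping track of which $\xi\cdot T_\o(\cdot)$ supremum is being taken over which fibres, and making sure the naturality-of-$m$ and (BC) hypotheses are invoked exactly where needed — is where the bookkeeping is genuinely delicate; the $|X|$ side, by contrast, should be a fairly direct transcription of the module inequality $b\ast\psi\leq\psi$.
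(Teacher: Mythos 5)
Your strategy coincides with the paper's: unfold the structures of $|X|\otimes Y$ and $X^\op\otimes Y$, transpose the functoriality conditions into $(\V,\hom_{_\xi})$ along $\otimes\dashv\hom$, and use Proposition 3.2 to pass between the fibrewise terms $\xi\cdot T_{\o}\psi(\ww)$ and $T\psi$, so that suprema over fibres yield the bimodule inequalities and, conversely, the bimodule inequalities dominate each fibre term. That is exactly how both implications are proved in the paper.

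There is, however, a concrete error in your setup of the $X^\op$ half, at precisely the point you flag as delicate. The dual $X^\op=E^\circ(M^\circ(X)^\op)$ has underlying set $TX$, not $X$: $M^\circ(X)=(TX,Ta\cdot m_X^\circ)$ and $E^\circ$ leaves the underlying set unchanged, so the structure is $a^\op=e_{TX}^\circ\cdot Tm_X\cdot T^2a^\circ:T(TX)\rel TX$. Hence $X^\op\otimes Y$ lives on $TX\times Y$, with structure evaluated at $\ww\in T(TX\times Y)$ against pairs $(\x,y)$, $\x\in TX$ --- which is forced by the statement itself, since $\psi$ is a map $TX\times Y\to\V$. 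Your description puts the structure on $T(X\times Y)$ ``against $(x,y)$'' (and you repeat ``on $T(X\times Y)$'' later), and with that reading the functoriality condition cannot even be matched with $\psi$; the whole $X^\op$ computation must be redone on $T(TX\times Y)$. Once corrected, your skeleton does go through, but two of your attributions also need adjusting: in (i)$\Rightarrow$(ii) the $X^\op$-functoriality is not obtained from $\psi\ast a\le\psi$ together with reflexivity/transitivity of $a$ alone --- the $b$-factor in the tensor structure forces you to use $b\ast\psi\le\psi$ as well, and the (strict) naturality of $m$ is what allows one to rewrite $T^2a\cdot Tm_X^\circ\cdot m_X^\circ$ so that the composite collapses to $\psi\cdot Ta\cdot m_X^\circ$; and in (ii)$\Rightarrow$(i) the $X^\op$-functor yields $\psi\ast a\le\psi$ only after inserting the lax unit condition $e_Y\cdot\psi\le T\psi\cdot e_{TX}$ and reflexivity of $b$ (i.e.\ $1_Y\le b\cdot e_Y$). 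These are the places where the hypotheses actually enter, which your plan leaves vague; the $|X|$ half is, as you say, a direct transcription of $b\ast\psi\le\psi$.
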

\begin{proof}
Assume that $\psi:(X,a)\bim(Y,b)$ is a $(\T,\V)$-bimodule. First observe that, for $\ww\in T(TX\times Y)$,
\[
\xi\cdot T_{\o}\psi(\ww)\le
T\psi(T_\o\pi_{TX}(\ww),T_{\o}\pi_Y(\ww)).
\]
Let $\ww\in T(TX\times Y)$, $\x\in TX$ and $y\in Y$.
To see that $\psi:|X|\otimes Y\to\V$ is a $(\T,\V)$-functor, note that the structure $c$ on $|X|\otimes Y$ is given by
\[
c(\ww,(\x,y))=
\begin{cases}
\bot & \text{if }\x\neq m_X(T_{\o}\pi_{TX}(\ww)),\\
b(T\pi_Y(\ww),y) &\text{if }\x= m_{X}(T_{\o}\pi_{TX}(\ww)).
\end{cases}
\]
Assume $\x=m_X(T_{\o}\pi_{TX}(\ww))$. Since
\[
b(T_{\o}\pi_Y(\ww),y)\le\hom(\xi\cdot T_{\o}\psi(\ww),\psi(\x,y))
\]
is equivalent to
\[
\xi\cdot T_{\o}\psi(\ww)\otimes b(T_{\o}\pi_Y(\ww),y)\le \psi(\x,y),
\]
the assertion follows at once. We show now that $\psi:X^\op\otimes
Y\to\V$ is a $(\T,\V)$-functor. As above we have that (with
$a^\op=e_{TX}^\circ\cdot Tm_X\cdot T^2a^\circ$ the structure on
$X^\op$)
\[
a^\op(T_{\o}\pi_{TX}(\ww),\x)\otimes
b(T_{\o}\pi_Y(\ww),y)\le\hom(\xi\cdot T_{o}\psi(\ww),\psi(\x,y))
\]
is equivalent to
\[
\xi\cdot T_{\o}\psi(\ww)\otimes a^\op(T_{\o}\pi_{TX}(\ww),\x)\otimes
b(T_{\o}\pi_Y(\ww),y)\le\psi(\x,y).
\]
Now
\begin{multline*}
\xi\cdot T_{\o}\psi(\ww)\otimes a^\op(T_{\o}\pi_{TX}(\ww),\x)\otimes b(T_{\o}\pi_Y(\ww),y)\\
\begin{aligned}
&\le T^2a\cdot Tm_X^\circ\cdot e_{TX}(\x,T_{\o}\pi_{TX}(\ww))\otimes
T\psi(T_{\o}\pi_{TX}(\ww),T_{\o}\pi_Y(\ww))\otimes b(T_{\o}\pi_Y(\ww),y)\\
&\le b\cdot T\psi\cdot T^2a\cdot Tm_X^\circ\cdot m_X^\circ(\x,y)\\
&\le b\cdot T\psi\cdot m_{X}\cdot Ta\cdot m_X^\circ(\x,y)\\
&=\psi\cdot Ta\cdot m_X^\circ(\x,y)=\psi(\x,y).
\end{aligned}
\end{multline*}
Now assume that $\psi:|X|\otimes Y\to\V$ and $\psi:X^\op\otimes
Y\to\V$ are $(\T,\V)$-functors. Functoriality of $\psi:|X|\otimes
Y\to\V$ implies, for all $\x\in TX$ and $y\in Y$,
\begin{align*}
\psi(\x,y)
 &\ge\bigvee_{\substack{\xx\in TTX:\\ m_X(\xx)=\x;\\ \y\in TY}}
  \bigvee\left\{\xi\cdot T_{\o}\psi(\ww)\otimes b(\y,y)\;\Bigl\lvert\;\ww\in T(TX\times
  Y):\ww\mapsto\xx, \ww\mapsto\y\right\}\\
 &=\bigvee_{\substack{\xx\in TTX:\\ m_X(\xx)=\x;\\ \y\in TY}}T\psi(\xx,\y)\otimes b(\y,y)\\
 &=\bigvee_{\substack{\xx\in TTX:\\ m_X(\xx)=\x}}b\cdot T\psi(\xx,y)\\
 &=b\cdot T\psi\cdot m_X^\circ(\x,y).
\end{align*}
On the other hand, by functoriality of $\psi:X^\op\otimes Y\to\V$, for all $\x\in TX$ and $y\in Y$ we have
\begin{align*}
\psi(\x,y)
 &\ge\bigvee_{\substack{\xx\in TTX,\\ \y\in TY}}
   \bigvee\left\{\xi\cdot T_{\o}\psi(\ww)\otimes b(\y,y)\otimes a^\op(\xx,\x)\;\Bigl\lvert\;
   \ww\in T(TX\times Y):\ww\mapsto\xx, \ww\mapsto\y\right\}\\
 &=\bigvee_{\substack{\xx\in TTX,\\ \y\in TY}} T\psi(\xx,\y)\otimes b(\y,y)\otimes T^2a\cdot
 Tm_X^\circ\cdot e_{TX}(\x,\xx)\\
 &=b\cdot T\psi\cdot T^2a\cdot Tm_X^\circ\cdot e_{TX}(\x,\xx)\\
 &\ge b\cdot e_Y\cdot\psi\cdot Ta\cdot m_X^\circ(\x,y)\\
 &\ge \psi\cdot Ta\cdot m_X^\circ(\x,y).\qedhere
\end{align*}
\end{proof}

\subsection{$\V$ is Lawvere-complete.}\label{VTVcomplete}
\begin{theorem}
Assume that $T1=1$. Then $(V,\hom_{_\xi})$ is a Lawvere-complete
$(\T,\V)$-category provided that
$a:=M^\circ(\hom_{_\xi})=\xi^\circ\cdot\hom\cdot \xi$ (i.e., for
$\v,\w\in T\V$, $a(\v,\w)=\hom(\xi(\v),\xi(\w))$).
\end{theorem}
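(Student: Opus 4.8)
The plan is to reduce the assertion to the Theorem of \S\ref{LawComplVCat} (that $(\V,\hom)$ is Lawvere-complete) by mimicking its direct proof. Since $T1=1$, Proposition~\ref{TVcomplete} lets us test Lawvere completeness of $(\V,\hom_{_\xi})$ only against pairs of adjoint bimodules $\varphi:(1,p)\bim(\V,\hom_{_\xi})$ and $\psi:(\V,\hom_{_\xi})\bim(1,p)$; unravelling condition (iii) of that proposition (and using $\xi\cdot e_\V=\id_\V$), what must be produced is an element $v_0\in\V$ with $\varphi(\star,v)=\hom(v_0,v)$ for all $v\in\V$ and $\psi(\x,\star)=\hom(\xi(\x),v_0)$ for all $\x\in T\V$. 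I would begin by noting that, because $T1=1$, $\varphi$ is just a map $\V\to\V$ and $\psi$ just a map $T\V\to\V$, and that the explicit description of the extension of $T$ recorded in \S\ref{TAlgStr} gives $T\varphi(\star,\y)=\xi(T_{\o}\varphi(\y))$ and $T\psi(\xx,\star)=\xi(T_{\o}\psi(\xx))$.

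First I would unravel the bimodule hypotheses. From the identity $\psi=\psi\ast\hom_{_\xi}=\psi\cdot M^\circ(\hom_{_\xi})$ and the standing hypothesis $M^\circ(\hom_{_\xi})=\xi^\circ\cdot\hom\cdot\xi$ one obtains $\psi(\x,\star)=\bigvee_{\y\in T\V}\hom(\xi(\x),\xi(\y))\otimes\psi(\y,\star)$. The right-hand side depends on $\x$ only through $\xi(\x)$, and $\xi$ is onto, so $\psi$ factors as $\psi=\hat\psi\cdot\xi$ with $\hat\psi:=\psi\cdot e_\V$, and the identity above turns into the familiar $\V$-categorical relation $\hat\psi(s)=\bigvee_{t\in\V}\hom(s,t)\otimes\hat\psi(t)$; in particular $\hat\psi:(\V,\hom)\bim(1,p)$ is a $\V$-bimodule. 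On the other hand the bimodule condition $\hom_{_\xi}\ast\varphi\le\varphi$ reads, since $T1=1$, as $\bigvee_{\y}\xi(T_{\o}\varphi(\y))\otimes\hom(\xi(\y),v)\le\varphi(v)$; specialising $\y$ to a principal $e_\V(u)$ gives $\varphi(u)\otimes\hom(u,v)\le\varphi(v)$ (so $\varphi:(\V,\hom)\to(\V,\hom)$ is a $\V$-functor), and specialising $v$ to $\xi(\y)$ gives $\xi\cdot T_{\o}\varphi\le\varphi\cdot\xi$. (These facts may also be read off Theorem~\ref{TensorTV} after observing that $|1|\cong 1^{\op}\cong(1,p)$.)

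Next I would translate the triangular identities. The join computing $(\varphi\ast\psi)(\x,v)$ contains the summand $\psi(\x,\star)\otimes\varphi(v)$ (take $\xx=e_{T\V}(\x)$), so $\varphi\ast\psi\le 1_{(\V,\hom_{_\xi})}$ forces $\psi(\x,\star)\otimes\varphi(v)\le\hom(\xi(\x),v)$, i.e.\ $\hat\psi(u)\otimes\varphi(v)\le\hom(u,v)$ for all $u,v$. The other inequality $1_{(1,p)}\le\psi\ast\varphi$ reads $k\le\bigvee_{\y}\xi(T_{\o}\varphi(\y))\otimes\psi(\y,\star)$, and using $\xi\cdot T_{\o}\varphi\le\varphi\cdot\xi$ together with $\psi=\hat\psi\cdot\xi$ this becomes $k\le\bigvee_{u\in\V}\varphi(u)\otimes\hat\psi(u)$. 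At this point the pair $(\varphi,\hat\psi)$ satisfies verbatim the hypotheses used in the proof of the Theorem of \S\ref{LawComplVCat}, so I would conclude by running that argument with $v_0:=\hat\psi(k)=\psi(e_\V(k),\star)$: from $v_0\otimes\varphi(v)\le\hom(k,v)=v$ one gets $\varphi(v)\le\hom(v_0,v)$; the equalities $v_0=\bigvee_{t}\hat\psi(t)\otimes t$ and $\hom(v,v_0)=\bigvee_{t}\hom(v,t)\otimes\hat\psi(t)$ are proved exactly as there, and combined with $\hat\psi(v)=\bigvee_{t}\hom(v,t)\otimes\hat\psi(t)$ they give $\hat\psi(v)=\hom(v,v_0)$, hence $\psi(\x,\star)=\hom(\xi(\x),v_0)$; finally, a short computation using the unit inequality and $\varphi(u)\otimes\hom(u,v)\le\varphi(v)$ upgrades $\varphi(v)\le\hom(v_0,v)$ to an equality. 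Thus $v_0$ is the required representing element.

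The step I expect to be the main obstacle is this first reduction: showing that $\psi$ factors through $\xi$ and that, under the hypothesis $M^\circ(\hom_{_\xi})=\xi^\circ\cdot\hom\cdot\xi$, the $(\T,\V)$-bimodule conditions on $(\varphi,\psi)$ degenerate to ordinary $\V$-bimodule conditions on $(\varphi,\hat\psi)$ — together with the bookkeeping that converts joins indexed by $T\V$ or $T^2\V$ into joins indexed by $\V$ in the triangular identities, where the identifications $T1=1$ and $T\varphi(\star,-)=\xi\cdot T_{\o}\varphi$ do the work. Once this is in place, the remainder is just the $\V$-categorical computation of \S\ref{LawComplVCat} carried out a second time.
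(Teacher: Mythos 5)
Your argument is correct, and it takes a somewhat different route from the paper's. The paper proves the theorem by redoing, at the level of $T\V$, the three-step computation used for $(\V,\hom)$ in \ref{LawComplVCat}: with $\doo{k}=e_\V(k)$ it shows $\psi(\doo{k})=\bigvee_{\v\in T\V}\psi(\v)\otimes\xi(\v)$, then $\hom_{_\xi}(\v,\psi(\doo{k}))=\bigvee_{\u\in T\V}\hom(\xi(\v),\xi(\u))\otimes\psi(\u)$, and finally $\psi(\v)=\bigvee_{\u\in T\V}a(\v,\u)\otimes\psi(\u)$, concluding that $\psi(\doo{k})$ represents the adjunction; the hypothesis $a=\xi^\circ\cdot\hom\cdot\xi$ enters pointwise in these steps. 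You instead use that hypothesis once and for all to collapse the $(\T,\V)$-data to $\V$-data: from $\psi\ast\hom_{_\xi}=\psi$ (where ``$\le$'' is the bimodule axiom and ``$\ge$'' is immediate since $a(\x,\x)\ge k$) you deduce that $\psi$ factors as $\hat\psi\cdot\xi$ with $\hat\psi=\psi\cdot e_\V$, and you verify --- correctly, via $T\varphi(\star,\_)=\xi\cdot T_{\o}\varphi$, the specialisations $\y=e_\V(u)$ and $v=\xi(\y)$, and the summand $\xx=e_{T\V}(\x)$ in the triangular identities --- that the pair $(\varphi,\hat\psi)$ satisfies exactly the $\V$-functoriality and adjunction conditions exploited in the proof of the Theorem of \ref{LawComplVCat}; rerunning that proof with $v_0=\hat\psi(k)=\psi(\doo{k})$ gives $\psi(\x)=\hom(\xi(\x),v_0)$ and $\varphi(v)=\hom(v_0,v)$ (your final upgrade of $\varphi\le\hom(v_0,\_)$ to an equality could equally be replaced by uniqueness of adjoints, as the paper does implicitly). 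Both proofs single out the same representing element and use the hypothesis on $M^\circ(\hom_{_\xi})$ as the crucial ingredient; what your reduction buys is economy and transparency, since the hypothesis is exhibited as precisely the condition making all the data computable along $\xi$, so the quantale calculations need not be repeated, whereas the paper's direct three-step computation stays self-contained at the $(\T,\V)$-level and visibly parallel to the $\V$-case argument it generalises.
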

\begin{proof}
Let
\[\xymatrix{1\ar@(dl,ul)^p|{\circ}\ar@/^1pc/[rr]^{\varphi}|{\circ}\ar@{}[rr]|{\bot}&&
V\ar@/^1pc/[ll]^{\psi}|{\circ}\ar@(ur,dr)^{\hom_{_\xi}}|{\circ}}\]
be a pair of adjoint bimodules. By the previous theorem we know
that:
\begin{align}\label{phibim}%
\varphi\text{ bimodule}&\iff\varphi:(\V,\hom_{_\xi})\to(\V,\hom_{_\xi})\text{ is a $(\T,\V)$-functor}\\
&\iff \forall \v\in T\V\;\;\forall v\in
\V\;\hom(\xi(\v),v)\leq\hom(\xi\cdot
T\varphi(\v),\varphi(v)).\notag
\end{align}
In particular, for every $\v\in T\V$, $k\leq\hom(\xi(\v),\xi(\v))\leq\hom(\xi\cdot T\varphi(\v),
\varphi\cdot \xi(\v))$, hence $\xi\cdot T\varphi(\v)\leq\varphi\cdot\xi(\v)$.
\begin{align}\label{psibim}
\psi\text{ bimodule}
&\iff \psi:(T\V,a^\circ)\to(V,\hom)\text{ is a $\V$-functor}\\
&\iff\forall \v,\w\in T\V\;a(\v,\w)\leq\hom(\psi(\w),\psi(\v)).\notag
\end{align}
Finally,
\begin{align}\label{psiphiadj}
\varphi\dashv\psi&\iff
\begin{cases}
(a)&\varphi\ast\psi\leq\hom\cdot \xi\iff \forall\v\in T\V\,\forall v\in\V\;\psi(\v)\otimes
\varphi(v)\leq\hom(\xi(\v),v),\\
(b)&p\leq\psi\ast\varphi\iff k\leq\DS\bigvee_{\u\in
TV}\psi(\u)\otimes\xi(T_{\o}\varphi(\u)).
\end{cases}
\end{align}
We will show that the adjunction $\varphi\dashv\psi$ is represented
by $\psi(\doo{k})$, where $\doo{k}=e_\V(k)$. Similarly to the proof
of Theorem \ref{LawComplVCat}, we split our argument in three steps:

\medskip
\noindent{\bf (1st) } $\psi(\doo{k})=\DS\bigvee_{\v\in TV}\psi(\v)\otimes\xi(\v)$:
\smallskip

\noindent ``$\leq$" is immediate; for ``$\geq$" we argue as follows:
\begin{align*}
\psi(\v)\otimes\xi(\v)&=\psi(\v)\otimes\hom(\xi(\doo{k}),\xi(\v))\\
&=\psi(\v)\otimes a(\doo{k},\v)&\text{(by hypothesis)}\\
&\leq\psi(\v)\otimes\hom(\psi(\v),\psi(\doo{k}))&\text{(by \eqref{psibim})}\\
&\leq\psi(\doo{k}).
\end{align*}

\medskip
\noindent{\bf (2nd) } $\forall\v\in
TV\;\hom_{_\xi}(\v,\psi(\doo{k}))= \DS\bigvee_{\u\in
TV}\hom(\xi(\v),\xi(\u))\otimes\psi(\u)$:
\smallskip

\noindent To check ``$\geq$" we just observe that
\[\xi(\v)\otimes(\hom(\xi(\v),\xi(\u))\otimes\psi(\u))\leq\xi(\u)\otimes
\psi(\u)\leq\psi(\doo{k}).\]
For ``$\leq$", first note that
\[
\psi(\doo{k})\otimes\varphi(\xi(\u))\leq\hom(\xi(\doo{k}),\xi(\u))=
\hom(k,\xi(\u))=\xi(\u)
\]
from which follows
\begin{align}\label{xiTphi_le_hom}
\xi(T\varphi(\u))\le\varphi(\xi(\u))\leq\hom(\psi(\doo{k}),\xi(\u)).
\end{align}
From that we conclude that
\begin{align*}
\hom(\xi(\v),\psi(\doo{k}))&\leq\hom(\xi(\v),\psi(\doo{k}))\otimes
\DS\bigvee_{\u\in TV}\psi(\u)\otimes\xi(T\varphi(\u))&\text{(by
(\ref{psiphiadj}b))}\\
&=\DS\bigvee_{\u\in TV}\hom(\xi(\v),\psi(\doo{k}))\otimes
\psi(\u)\otimes\xi(T\varphi(\u))\\
&\leq\DS\bigvee_{\u\in TV}\hom(\xi(\v),\psi(\doo{k}))\otimes
\hom(\psi(\doo{k}),\xi(\u))\otimes\psi(\u)&\mbox{(by \eqref{xiTphi_le_hom})}\\
&\leq\DS\bigvee_{\u\in TV}\hom(\xi(\v),\xi(\u))\otimes\psi(\u).
\end{align*}

\medskip
\noindent {\bf (3rd) } $\forall\v\in T\V\;\psi(\v)=\DS\bigvee_{\u\in T\V}
a(\v,\u)\otimes\psi(\u)$:
\smallskip

\noindent For ``$\leq$" take $\u=\v$; for ``$\geq$" we use \eqref{psibim}:
$a^\circ(\u,\v)\otimes\psi(\u)\leq\hom(\psi(\u),\psi(\v))\otimes
\psi(\u)\leq\psi(\v).$
\end{proof}

\begin{lemma}
Assume that $T1=1$. Then $T(\hom_{_\xi})\cdot
m_\V^\circ=\xi^\circ\cdot \hom\cdot \xi$ provided that
\begin{align}\label{eqhom}
&\xi\cdot T_{\o}\hom(u,\_)\le\hom(u,\_)\cdot\xi,\\
&\xymatrix{T\V\ar[rr]^-{T_{\o}(\hom(u,\_))}\ar[d]_{\xi}\ar@{}[drr]|{\le}
&& T\V\ar[d]^\xi\\ \V\ar[rr]_-{\hom(u,\_)} && \V}\notag
\end{align}
for each $u\in\V$. The inequality \eqref{eqhom} is surely true if $\hom(u,\_)$ preserves non-empty suprema.
\end{lemma}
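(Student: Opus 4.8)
The plan is to prove the equality of $\V$-matrices $T(\hom_{_\xi})\cdot m_\V^\circ=\xi^\circ\cdot\hom\cdot\xi$ by establishing the two inclusions separately, using throughout the identities $\hom_{_\xi}=\hom\cdot\xi$ and $T(\hom_{_\xi})=T\hom\cdot T\xi$ (equality in~(b) of~\ref{Text}, since the left factor $\xi$ is a $\Set$-map), the Eilenberg--Moore law $\xi\cdot T\xi=\xi\cdot m_\V$, and the monad equations $m_\V\cdot e_{T\V}=\id_{T\V}$ and $m_\V\cdot m_\V^\circ\le\id_{T\V}$. The inclusion ``$\le$'' uses neither $T1=1$ nor~\eqref{eqhom}: since Proposition~\ref{TVCatV} gives $T\hom\le\xi^\circ\cdot\hom\cdot\xi$, monotonicity of matrix composition yields
\[
T\hom\cdot T\xi\cdot m_\V^\circ\ \le\ \xi^\circ\cdot\hom\cdot\xi\cdot T\xi\cdot m_\V^\circ\ =\ \xi^\circ\cdot\hom\cdot\xi\cdot m_\V\cdot m_\V^\circ\ \le\ \xi^\circ\cdot\hom\cdot\xi .
\]

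For the inclusion ``$\ge$'' I would first observe that $m_\V^\circ\ge e_{T\V}$ (from $m_\V\cdot e_{T\V}=\id$ and $m_\V^\circ\cdot m_\V\ge\id$), so by naturality of $e$ at $\xi$,
\[
T(\hom_{_\xi})\cdot m_\V^\circ\ \ge\ T(\hom_{_\xi})\cdot e_{T\V}\ =\ T\hom\cdot(T\xi\cdot e_{T\V})\ =\ T\hom\cdot e_\V\cdot\xi ,
\]
which pointwise reads $\bigl(T(\hom_{_\xi})\cdot m_\V^\circ\bigr)(\v,\w)\ge T\hom(e_\V(\xi(\v)),\w)$. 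Hence it remains to prove
\[
T\hom(e_\V(u),\w)\ \ge\ \hom(u,\xi(\w))\qquad\text{for all }u\in\V,\ \w\in T\V .
\]
Here $T1=1$ enters: the constant map $c_u\colon\V\to\V$ with value $u$ factors through $T1=1$, so $Tc_u$ is the constant map at $e_\V(u)$ (by naturality of $e$). Thus for $\Omega:=T\langle c_u,\id_\V\rangle(\w)\in T(\V\times\V)$ one has $T_\o\pi_1(\Omega)=e_\V(u)$ and $T_\o\pi_2(\Omega)=\w$, and Proposition~\ref{TAlgStr} gives
\[
T\hom(e_\V(u),\w)\ \ge\ \xi\cdot T_\o\hom(\Omega)\ =\ \xi\bigl(T_\o(\hom(u,\_))(\w)\bigr)\ \ge\ \hom(u,\xi(\w)),
\]
the last step being precisely~\eqref{eqhom}. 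Combined with ``$\le$'', this proves the asserted equality.

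For the closing sentence: if $\hom(u,\_)$ preserves non-empty suprema then $\hom(u,\xi(\w))=\bigvee\{\hom(u,v)\mid\w\in T(\uparrow\! v)\}$ (the index set containing $\bot$, hence non-empty), and for each such $v$ monotonicity of $\hom(u,\_)$ gives $T_\o(\hom(u,\_))\bigl(T(\uparrow\! v)\bigr)\subseteq T\bigl(\uparrow\!\hom(u,v)\bigr)$, so that $\hom(u,v)\le\xi\bigl(T_\o(\hom(u,\_))(\w)\bigr)$; taking suprema over $v$ gives~\eqref{eqhom}.

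I expect the main obstacle to be the pointwise estimate $T\hom(e_\V(u),\w)\ge\hom(u,\xi(\w))$ — in particular, verifying that $\Omega:=T\langle c_u,\id_\V\rangle(\w)$ is an admissible witness in Proposition~\ref{TAlgStr}, i.e.\ that $Tc_u$ really collapses to the constant map at $e_\V(u)$; this is exactly the place where $T1=1$ is indispensable. Everything else is formal manipulation of the monad and Eilenberg--Moore identities.
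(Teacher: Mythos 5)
Your proof is correct and follows essentially the same route as the paper's: the identical ``$\le$'' computation via Proposition~\ref{TVCatV} together with $\xi\cdot T\xi=\xi\cdot m_\V$ and $m_\V\cdot m_\V^\circ\le\id$, the same witness $T_{\o}\langle c_u,\id_\V\rangle(\w)$ (the paper's $T_{\o}\langle\xi(\u),\id_\V\rangle(\v)$) for the ``$\ge$'' estimate --- where you only need existence of the witness, avoiding the paper's stronger uniqueness claim at that step --- and the same sup-preservation verification at the end. One caveat: the inequality you actually invoke and verify is $\hom(u,\_)\cdot\xi\le\xi\cdot T_{\o}(\hom(u,\_))$, which is the reverse of \eqref{eqhom} as literally displayed; since this is also the direction the paper's own proof uses and checks under the hypothesis that $\hom(u,\_)$ preserves non-empty suprema, the mismatch points to a typo in the lemma's statement rather than a gap in your argument.
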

\begin{proof}
First observe that
\begin{align*}
T(\hom_{_\xi})\cdot m_V^\circ&=T\hom\cdot T\xi\cdot m_\V^\circ\\
&\leq\xi^\circ\cdot \hom\cdot \xi\cdot T\xi\cdot m_\V^\circ&\text{(because $\xi$ is
a $\V$-functor, by Proposition \ref{TVCatV})}\\
&=\xi^\circ\cdot \hom\cdot \xi\cdot m_\V\cdot m_V^\circ\\
&\leq\xi^\circ\cdot \hom\cdot \xi.
\end{align*}
On the other hand, for $\u,\v\in T\V$, we have
\begin{align*}
a(\u,\v) &\ge T\hom_{_\xi}(\doo{\u},\v)\\
 &=T\hom(T_{\o}\xi(\doo{\u}),\v)\\
 &=T\hom(\doo{\xi(\u)},\v)\\
 &=\xi\cdot T_{\o}\hom\cdot T_{\o}\langle\xi(\u),\id_\V\rangle(\v)&\text{(*)}\\
 &\ge\hom(\xi(\u),\_)\cdot\xi(\v)=\hom(\xi(\u),\xi(\v)).
\end{align*}
To see (*), just observe that
$T_{\o}\langle\xi(\u),\id_\V\rangle(\v)$ is the only element of
$T(\V\times\V)$ which projects to both $\doo{\xi(\u)}$ and $\v$.
Assume now that $\hom(u,\_)$ preserves non-empty suprema and let
$u\in\V$ and $\u\in T\V$. Then
\begin{align*}
\hom(u,\xi(\u))&=\hom(u,\bigvee\{v\in\V\mid \u\in T(\uparrow\!v)\})\\
 &= \bigvee\{\hom(u,v)\mid v\in\V,\u\in T(\uparrow\!v)\})\\
 &\le\bigvee\{v\in\V\mid T_{\o}\hom(u,\_)(\u)\in T(\uparrow\!v)\}).\qedhere
\end{align*}
\end{proof}

\section{A Yoneda Lemma for $(\T,\V)$-categories}

\subsection{Function spaces.}
In this section we wish to obtain the analogue result to Corollary
\ref{VfunctVbim} in the setting of $(\T,\V)$-categories. This in
turn requires an understanding of the right adjoint to
$X\otimes\_:\Cat{(\T,\V)}\to\Cat{(\T,\V)}$, a problem studied in
\cite{H}. From there we import the following result.
\begin{proposition}
Let $X=(X,a)$ be a $(\T,\V)$-category. Then
$X\otimes\_:\Cat{(\T,\V)}\to\Cat{(\T,\V)}$ has a right adjoint
$\_^X$ provided that $a\cdot Ta=a\cdot m_X$.
\end{proposition}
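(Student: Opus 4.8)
The plan is to argue abstractly, exploiting the fact that $\Cat{(\T,\V)}$ is a topological category over $\Set$. First I would recall that the forgetful functor $\Cat{(\T,\V)}\to\Set$ is topological, so that $\Cat{(\T,\V)}$ is complete and cocomplete; moreover, since $\Set$ is cototal and topological functors lift cototality from the base, $\Cat{(\T,\V)}$ is cototal. The dual of the total-category adjoint functor theorem then says that a functor $F:\Cat{(\T,\V)}\to\Cat{(\T,\V)}$ has a right adjoint if and only if $F$ preserves all small colimits, the solution-set condition being automatic here. Thus the entire problem reduces to showing that $X\otimes\_$ preserves colimits, and this is precisely the step into which the hypothesis $a\cdot Ta=a\cdot m_X$ must feed.

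Next I would make colimits explicit. As the forgetful functor is topological, the colimit of a diagram $(Y_i,b_i)$ is carried by the $\Set$-colimit $C=\operatorname{colim}Y_i$ of underlying sets, endowed with the final $(\T,\V)$-structure $c$ relative to the coprojections $\sigma_i:Y_i\to C$. Because $X\times\_$ preserves colimits in $\Set$, the underlying set of $X\otimes(C,c)$ is already $\operatorname{colim}(X\times Y_i)$, so preservation of the colimit boils down to the single identity
\[
a\otimes c=\text{final structure on }X\times C\text{ relative to the }1_X\times\sigma_i.
\]
One inequality is free: each $1_X\otimes\sigma_i:X\otimes Y_i\to X\otimes(C,c)$ is a $(\T,\V)$-functor by functoriality of $\otimes$, so $a\otimes c$ lies below the largest (final) structure making all $1_X\times\sigma_i$ functors. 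Since every small colimit is built from coproducts and coequalizers, and $X\otimes\_$ visibly commutes with disjoint sums, the whole content sits in coequalizers, i.e.\ in the reverse inequality for quotient maps.

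The main obstacle is this reverse inequality: that the final structure on $X\times C$ is $\le a\otimes c$. Writing the final structure as the ($\ast$-transitively closed) supremum $\bigvee_i(1_X\times\sigma_i)\cdot(a\otimes b_i)\cdot(T(1_X\times\sigma_i))^\circ$, I would evaluate it at a pair $(\w,(x,v))\in T(X\times C)\times(X\times C)$ and compare with $a(T\pi_X(\w),x)\otimes c(T\pi_C(\w),v)$. The subtlety is that $c$, being final, is itself an iterated Kleisli composite of the $b_i$ along the $\sigma_i$, and each iteration drags along a factor $Ta$ on the $X$-coordinate; the identity $a\cdot Ta=a\cdot m_X$ is exactly what collapses these factors strictly back to $a\cdot m_X$, so that the $X$-side contributes no more than $a(T\pi_X(\w),x)$ and the two sides meet. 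This is the $(\T,\V)$-version of the classical theorem that in $\Top$ the functor $X\times\_$ preserves quotients precisely when $X$ is core-compact (equivalently exponentiable); here $a\cdot Ta=a\cdot m_X$ plays the role of core-compactness, and without it colimit preservation genuinely fails.

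As an alternative giving an explicit right adjoint, I would construct $Z^X$ with underlying set $\Cat{(\T,\V)}(X,Z)$ and structure the function-space bracket $\fspstr{-}{-}:T\Cat{(\T,\V)}(X,Z)\rel\Cat{(\T,\V)}(X,Z)$ obtained by transposing evaluation, and then verify the natural bijection $\Cat{(\T,\V)}(X\otimes Y,Z)\cong\Cat{(\T,\V)}(Y,Z^X)$ directly. In this route the hypothesis $a\cdot Ta=a\cdot m_X$ is what forces $\fspstr{-}{-}$ to satisfy transitivity, so that $Z^X$ is a genuine $(\T,\V)$-category, and what makes passing to mates compatible with the lax structure. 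I would present the cototality argument as the main proof and leave this explicit construction as a remark, since it additionally requires the delicate check that each partial map $x\mapsto f(x,y)$ is again a $(\T,\V)$-functor.
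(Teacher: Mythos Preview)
The paper does not prove this proposition; it imports it from \cite{H}, and the description given immediately after the statement shows that the intended argument is the explicit construction of $Y^X$: one takes as underlying set the set of $(\T,\V)$-functors $(X,a)\otimes(1,p)\to(Y,b)$, equips it with the largest structure $\fspstr{a}{b}$ making evaluation a $(\T,\V)$-functor, and verifies the adjunction bijection directly. So your ``alternative'' is in fact the paper's route, not a remark. Note, however, that your underlying set is wrong in general: you wrote $\Cat{(\T,\V)}(X,Z)$, but the correct carrier is $\{h:(X,a)\otimes(1,p)\to(Z,c)\mid h\text{ a }(\T,\V)\text{-functor}\}$, and these only coincide when $(1,p)$ is neutral for $\otimes$, e.g.\ when $T1=1$.

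Your main argument via cototality has a genuine gap at the decisive point. The scaffolding is fine: $\Cat{(\T,\V)}$ is topological over $\Set$, hence cototal, so it suffices to show that $X\otimes\_$ preserves colimits, and since it preserves coproducts the burden rests on quotients. But the sentence ``the identity $a\cdot Ta=a\cdot m_X$ is exactly what collapses these factors strictly back to $a\cdot m_X$'' is not a proof; it is precisely the hard computation you have not done. The final structure on $X\times C$ is an iterated Kleisli closure, and showing it is bounded above by $a\otimes c$ requires a concrete inequality tracking how $Ta$-factors accumulate along chains in the quotient; you have not written that inequality, let alone checked it. Moreover, your analogy with core-compactness is misleading: in $\Top$ the exponentiable spaces are the core-compact ones, whereas the condition $a\cdot Ta=a\cdot m_X$ singles out a different (and not coextensive) class---e.g.\ the Eilenberg--Moore $\T$-algebras. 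The proposition only asserts sufficiency, so this mismatch does not refute your strategy, but it does mean you cannot simply invoke the topological story; you must actually carry out the $(\T,\V)$-calculation, and that is missing.
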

Certainly, each (Eilenberg-Moore) $\T$-algebra, considered as a
$(\T,\V)$-category, satisfies the condition above. Moreover, the
$(\T,\V)$-categorical structure $(X,a)$ induced by any $\V$-category
$X=(X,r)$ (see \ref{DualCat}) satisfies this condition if $Te_X\cdot
e_X=m_X^\circ\cdot e_X$.

Let $X=(X,a)$ and $(Y,b)$ be $(\T,\V)$-categories, and assume that
$a\cdot Ta=a\cdot m_X$. Then $Y^X$ has as underlying set
\[\{h:(X,a)\otimes(1,p)\to(Y,b)\mid h\text{ is a $(\T,\V)$-functor}\},\]
thanks to the bijection (with $P=(1,p)$)
\begin{eqnarray*}
&\underline{\;X\otimes P\to Y\;}.\\
&P\to Y^X
\end{eqnarray*}
The structure $\fspstr{a}{b}$ on $Y^X$ is the largest structure making the evaluation map
\[
\ev:X\otimes Y^X\to Y,\; (x,h)\mapsto h(x)
\]
a $(\T,\V)$-functor: for $\frp\in T(Y^X)$ and $h\in Y^X$ we have
\begin{equation*}
\fspstr{a}{b}(\frp,h)=\bigvee\left\{v\in\V\;\Bigl\lvert\;\forall\frq\in
T\pi_{_{Y^X}}^{-1}(\frp),x\in X\;\; a(T\pi_{_X}(\frq),x)\otimes v\le
b(T\!\ev(\frq),h(x))\right\}.
\end{equation*}

\subsection{The Yoneda Embedding.}\label{YonedaLemma}
By Theorem \ref{TensorTV}, the bimodule $a:X\bim X$ gives rise to
$(\T,\V)$-functors $a:|X|\otimes X\to\V$ and $a:X^\op\otimes
X\to\V$. According to the considerations above, we obtain a
$(\T,\V)$-functor $\yoneda=\mate{a}:X\to\V^{|X|}$. Our next result
should be compared with Corollary \ref{VfunctVbim}.
\begin{theorem}[Yoneda]
Let $X=(X,a)$ be a $(\T,\V)$-category. Then the following assertions hold.
\begin{enumerate}
\item[(a)]For all $\frx\in TX$ and $\varphi\in\V^{|X|}$,  $\fspstr{m_X}{\hom_\xi}(T\!\yoneda(\frx),\varphi)\le\varphi(\frx)$.
\item[(b)] Let $\varphi\in\V^{|X|}$. Then
\begin{align*}
\forall\frx\in
TX\,\,\varphi(\frx)\le\fspstr{m_X}{\hom_\xi}(T\!\yoneda(\frx),\varphi)
&&\iff&& \varphi:X^\op\to\V\text{ is a $(\T,\V)$-functor}.
\end{align*}
\end{enumerate}
\end{theorem}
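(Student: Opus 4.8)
The plan is to base everything on the single identity
\[
Ta(\frX,\frx)=T\ev(\frX,T\yoneda(\frx))\qquad(\frX\in T^2X,\ \frx\in TX),
\]
where on the right $\ev$ is regarded as the $\V$-matrix $\ev\colon TX\rel\V^{|X|}$ and $T\ev\colon T^2X\rel T\V^{|X|}$ is its lax extension. This identity is read off from the mate relation defining $\yoneda$: since $\yoneda=\mate a$ for the adjunction $|X|\otimes\_\dashv\_^{|X|}$, one has $a=\yoneda^\circ\cdot\ev$ in $\Mat\V$, hence $a^\circ=\ev^\circ\cdot\yoneda$; applying the lax extension $T$ and using that it preserves a composite whose inner (first-applied) factor is a genuine $\Set$-map, here $\yoneda\colon X\to\V^{|X|}$ (equality in (b) of \ref{Text}), together with $(Ta)^\circ=Ta^\circ$, gives $Ta=(T\yoneda)^\circ\cdot T\ev$, which evaluated at $(\frX,\frx)$ is the displayed identity. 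I expect this to be the main obstacle: one must use preservation of composition by $T$ only in the cases covered by (a)--(b) of \ref{Text}, and never for a transposed map.

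For (a), put $v:=\fspstr{m_X}{\hom_\xi}(T\yoneda(\frx),\varphi)$; the set of admissible values in the join defining $\fspstr{m_X}{\hom_\xi}$ is downward closed and closed under joins, so $v$ itself is admissible, and it suffices to test one pair. Write $\pi,\pi'$ for the two projections of $TX\times\V^{|X|}$ and set $\frq:=T\langle e_X,\yoneda\rangle(\frx)$, so that $T\pi'(\frq)=T\yoneda(\frx)$ and $T\pi(\frq)=Te_X(\frx)$. Testing against the point $\frx$ and using $m_X\cdot Te_X=\id_{TX}$, admissibility of $v$ gives $v=k\otimes v\le\hom(\xi(T\ev(\frq)),\varphi(\frx))$, i.e.\ $\xi(T\ev(\frq))\otimes v\le\varphi(\frx)$. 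Now $T\ev(\frq)=T_{\o}g(\frx)$ for the $\Set$-map $g:=\ev\circ\langle e_X,\yoneda\rangle\colon X\to\V$, $g(x)=a(e_X(x),x)$, and reflexivity of $(X,a)$ gives $g(x)\ge k$ for all $x$; hence $g$ factors through $\uparrow\!k\hookrightarrow\V$, so $T_{\o}g(\frx)\in T(\uparrow\!k)$ and therefore $\xi(T\ev(\frq))\ge k$ by the Manes formula for $\xi$. Thus $v\le\xi(T\ev(\frq))\otimes v\le\varphi(\frx)$.

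For (b), I first rewrite both sides. Since $m_X(T\pi(\frq),\z)$ vanishes unless $\z=m_X(T\pi(\frq))$, the condition ``$\varphi(\frx)\le\fspstr{m_X}{\hom_\xi}(T\yoneda(\frx),\varphi)$ for all $\frx$'' is (arguing as in (a) about the admissible set) equivalent to
\[
(\star)\qquad\xi(T\ev(\frq))\otimes\varphi(\frx)\le\varphi\bigl(m_X(T\pi(\frq))\bigr)\ \text{ whenever }\ T\pi'(\frq)=T\yoneda(\frx).
\]
On the other side, unwinding $X^\op=E^\circ(M^\circ(X)^\op)$ with the help of $(Ta)^\circ=Ta^\circ$, preservation of composition with $m_X$, the adjunction $E^\circ\dashv E$, and $\xi\cdot e_\V=\id_\V$ (so that $E(\V,\hom_\xi)=(\V,\hom)$), one checks that ``$\varphi\colon X^\op\to\V$ is a $(\T,\V)$-functor'' is equivalent to
\[
(\dagger)\qquad Ta(\frX,\fry)\otimes\varphi(\fry)\le\varphi(m_X(\frX))\ \text{ for all }\ \frX\in T^2X,\ \fry\in TX,
\]
i.e.\ to the ``left module'' half $\varphi\ast a\le\varphi$ of the bimodule property of $\varphi\colon(X,a)\bim(1,p)$.

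It then remains to prove $(\star)\Leftrightarrow(\dagger)$, and this is exactly where the boxed identity is used, in combination with the Proposition of \ref{TAlgStr} relating $T_{\o}r$ and $Tr$. For $(\dagger)\Rightarrow(\star)$: given $\frq$ with $T\pi'(\frq)=T\yoneda(\frx)$ and $\frX:=T\pi(\frq)$, that Proposition gives $\xi(T\ev(\frq))\le T\ev(\frX,T\yoneda(\frx))=Ta(\frX,\frx)$, and $(\dagger)$ with $\fry=\frx$ then yields the claim. For $(\star)\Rightarrow(\dagger)$: by the boxed identity and the same Proposition, $Ta(\frX,\fry)=T\ev(\frX,T\yoneda(\fry))$ is the join of the values $\xi(T\ev(\frw))$ over all $\frw$ with $T\pi(\frw)=\frX$ and $T\pi'(\frw)=T\yoneda(\fry)$; each such $\frw$ is admissible in $(\star)$ for the choice $\frx=\fry$, so $\xi(T\ev(\frw))\otimes\varphi(\fry)\le\varphi(m_X(\frX))$, and taking the join over $\frw$ (using distributivity of $\otimes$ over joins) gives $(\dagger)$.
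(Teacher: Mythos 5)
Your argument is correct, and it lands on the same two pointwise conditions as the paper's proof, but it gets there by a noticeably different route. The paper computes the function-space structure in closed form,
$\fspstr{m_X}{\hom_\xi}(T\!\yoneda(\frx),\varphi)=\bigvee\{v\in\V\mid\forall\fry\in TX\;\;Ta\cdot m_X^\circ(\fry,\frx)\otimes v\le\varphi(\fry)\}$,
and the crucial step there --- trading quantification over $\frW\in T(TX\times\V^{|X|})$ above $T\!\yoneda(\frx)$ for quantification over $\frW\in T(TX\times X)$ above $\frx$ --- uses that the square $\pi_2\cdot(\id_{TX}\times\yoneda)=\yoneda\cdot\pi_2$ is a pullback preserved weakly by $T$, i.e.\ (BC). You package exactly that content instead as the matrix identity $Ta=(T\yoneda)^\circ\cdot T\ev$ (with $\ev:TX\rel\V^{|X|}$), derived from the lax-extension axioms of \ref{Text} alone ($(Ta)^\circ=Ta^\circ$ plus equality in condition (b) when the inner factor is a $\Set$-map), and then compare the two joins supplied by the Proposition of \ref{TAlgStr}; this is legitimate under the standing assumptions of Section 3 and sidesteps the (BC)-pullback argument at this point. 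Your part (a) is also more economical: instead of the full formula together with $Ta\cdot m_X^\circ(\frx,\frx)\ge k$, you test the single witness $\frq=T\langle e_X,\yoneda\rangle(\frx)$ and use $m_X\cdot Te_X=\id_{TX}$, reflexivity of $a$ and the Manes formula to get $\xi\cdot T_{\o}\ev(\frq)\ge k$ --- the same ingredients, applied before rather than after the join computation. Finally, you spell out, via $E^\circ\dashv E$ and $\xi\cdot e_\V=\id_\V$, that ``$\varphi:X^\op\to\V$ is a $(\T,\V)$-functor'' amounts to $\V$-functoriality of $\varphi:(TX,m_X\cdot Ta^\circ)\to(\V,\hom)$, i.e.\ your $(\dagger)$; the paper uses this identification implicitly, so the two proofs agree on the target condition. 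What each buys: the paper's route yields a reusable closed formula for $\fspstr{m_X}{\hom_\xi}(T\!\yoneda(\frx),\varphi)$ from which (a) and (b) both drop out at once, while yours isolates the single structural fact $Ta=(T\yoneda)^\circ\cdot T\ev$ and keeps the lattice bookkeeping elementary.
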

\begin{proof}
Note that the diagrams
\begin{align*}
\xymatrix{ & \V\\ TX\times X\ar[r]_-{\id_{TX}\times\yoneda}\ar[ur]^a & TX\times\V^{|X|}\ar[u]_\ev} &&
\xymatrix{TX\times X\ar[r]^-{\id_{TX}\times\yoneda}\ar[d]_{\pi_2} &
TX\times\V^{|X|}\ar[d]^{\pi_2}\\ X\ar[r]_\yoneda & \V^{|X|}}
\end{align*}
commute, where the right-hand side diagram is even a pullback. Let
$\frx\in TX$ and $\varphi\in\V^{|X|}$. Hence
\begin{multline*}
\fspstr{m_X}{\hom_\xi}(T_{\o}\!\yoneda(\frx),\varphi)\\
\begin{aligned}
&=\bigvee\{v\in\V\mid \forall\fry\in TX,\frY\in m_X^{-1}(\fry),\frW\in T(TX\times\V^{|X|})\\
&\qquad\qquad
(T_{\o}\pi_1(\frW)=\frY\,\&\,T_{\o}\pi_2(\frW)=T_{\o}\!\yoneda(\frx)) \,\Rw\,
v\le\hom(\xi\cdot T_{\o}\ev(\frW),\varphi(\fry))\}\\
&=\bigvee\{v\in\V\mid \forall\fry\in TX,\frY\in m_X^{-1}(\fry),\frW\in T(TX\times X)\\
&\qquad\qquad (T_{\o}\pi_1(\frW)=\frY\,\&\,T_{\o}\pi_2(\frW)=\frx) \,\Rw\, v\le\hom(\xi\cdot T_{\o}a(\frW),\varphi(\fry))\}\\
&=\bigvee\{v\in\V\mid \forall\fry\in TX,\frY\in m_X^{-1}(\fry)\;\;
v\le\bigwedge_{\substack{\frW\in T(TX\times X)\\ T_{\o}\pi_1(\frW)=\frY\\ T_{\o}\pi_2(\frW)=\frx}}\hom(\xi\cdot T_{\o}a(\frW),\varphi(\fry))\}\\
&=\bigvee\{v\in\V\mid \forall\fry\in TX,\frY\in m_X^{-1}(\fry)\;\;
v\le\hom(\bigvee_{\substack{\frW\in T(TX\times X)\\ T_{\o}\pi_1(\frW)=\frY\\ T_{\o}\pi_2(\frW)=\frx}}\xi\cdot T_{\o}a(\frW),\varphi(\fry))\}\\
&=\bigvee\{v\in\V\mid \forall\fry\in TX,\frY\in m_X^{-1}(\fry)\;\;
v\le\hom(T a(\frY,\frx),\varphi(\fry))\}\\
&=\bigvee\{v\in\V\mid \forall\fry\in TX\;\;T a\cdot m_X^\circ(\fry,\frx)\otimes v\le\varphi(\fry)\}.
\end{aligned}
\end{multline*}
In particular we have
\[
v=k\otimes v\le T a\cdot m_X^\circ(\frx,\frx)\otimes v\le\varphi(\frx),
\]
which proves (a). On the other hand, $\varphi:(TX,T a\cdot m_X^\circ)\to(\V,\hom)$ is a $\V$-functor if and only if
\[
Ta\cdot m_X^\circ(\fry,\frx)\otimes \varphi(\frx)\le\varphi(\fry)
\]
for all $\fry,\frx\in TX$, from which follows (b).
\end{proof}
We put $\hat{X}=(\hat{X},\hat{a})$ where
$\hat{X}:=\{\varphi\in\V^{|X|}\mid \varphi:X^\op\to\V\text{ is a
$(\T,\V)$-functor}\}$ considered as a subcategory of $\V^{|X|}$.
Recall that $a:X^\op\otimes X\to\V$ is a $(\T,\V)$-functor, and
therefore $a(\_,x):X^\op\otimes P\to\V$ is a $(\T,\V)$-functor for
each $x\in X$. If $T1=1$, then $P=(1,p)=(1,k)$ is the neutral
element for $\otimes$ and we can restrict the Yoneda functor
$\yoneda$ to $\hat{X}$.
\begin{corollary}
Assume $T1=1$. Then the Yoneda functor $\yoneda:X\to\hat{X}$ is full
and faithful.
\end{corollary}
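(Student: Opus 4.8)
The plan is to deduce fullness and faithfulness of $\yoneda:X\to\hat X$ from the Yoneda Theorem together with the explicit description of the structure $\fspstr{m_X}{\hom_\xi}$ on $\V^{|X|}$ and the basic fact that $\yoneda=\mate a$ is induced by the bimodule $a:X\bim X$. Concretely, I want to show that for all $x,x'\in X$,
\[
a(e_X(x),x')=\fspstr{m_X}{\hom_\xi}(T\!\yoneda(e_X(x)),\yoneda(x')),
\]
so that the $(\T,\V)$-category structure transported along $\yoneda$ agrees with $a$; since $\yoneda(x')=a(\_,x')\in\hat X$ by the remark preceding the corollary (using $T1=1$ so that $P=(1,k)$ is the $\otimes$-unit and $a(\_,x'):X^\op\otimes P\to\V$ is a $(\T,\V)$-functor), this is exactly the statement that $\yoneda$ is full and faithful as a $(\T,\V)$-functor into $\hat X$.

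First I would apply part (a) of the Yoneda Theorem with $\frx=e_X(x)$ and $\varphi=\yoneda(x')=a(\_,x')$ to get
\[
\fspstr{m_X}{\hom_\xi}(T\!\yoneda(e_X(x)),a(\_,x'))\le a(e_X(x),x').
\]
For the reverse inequality, I would use part (b): since $a(\_,x'):X^\op\to\V$ is a $(\T,\V)$-functor (this is precisely where $\varphi\in\hat X$ is used, and it holds because $a:X^\op\otimes X\to\V$ is a $(\T,\V)$-functor by Theorem~\ref{TensorTV}), part (b) yields
\[
a(\frx',x')\le\fspstr{m_X}{\hom_\xi}(T\!\yoneda(\frx'),a(\_,x'))
\]
for all $\frx'\in TX$; specializing $\frx'=e_X(x)$ gives $a(e_X(x),x')\le\fspstr{m_X}{\hom_\xi}(T\!\yoneda(e_X(x)),a(\_,x'))$. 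Combining the two inequalities gives the desired equality. Since $\yoneda$ was already shown (in Section~\ref{YonedaLemma}) to be a $(\T,\V)$-functor, the equality of the structures says that $a(x,x')$, computed via $e_X$, is \emph{exactly} the $\hom_\xi^{|X|}$-distance between $\yoneda(x)$ and $\yoneda(x')$ pulled back through $e_X$; by the definition of the structure $E$ (the forgetful functor $\Cat{(\T,\V)}\to\Cat{\V}$ of \ref{DualCat}), this is precisely the condition that $\yoneda$ is fully faithful.

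The step I expect to carry the most weight is making precise the claim "equality of the transported structure along $\yoneda$ means full and faithful." One has to observe that a $(\T,\V)$-functor $f:(X,a)\to(Y,b)$ is full and faithful exactly when $a=f^\circ\cdot b\cdot Tf$, i.e. $a(\frx,x)=b(Tf(\frx),f(x))$ for all $\frx\in TX$, $x\in X$; equivalently $f^*\ast f_*=1_{(X,a)}$, equivalently $f_*\dashv f^*$ is an adjoint equivalence of bimodules onto its image. Then one notes that for $b=\fspstr{m_X}{\hom_\xi}$ on $\hat X\subseteq\V^{|X|}$, the value $b(T\!\yoneda(\frx),\yoneda(x))$ is exactly the quantity computed in the long display in the proof of the Yoneda Theorem, namely $\bigwedge_{\fry\in TX}\hom(Ta\cdot m_X^\circ(\fry,\frx),a(\fry,x))$ — and one must check this equals $a(\frx,x)$ for \emph{all} $\frx\in TX$, not just $\frx=e_X(x)$. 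That follows from parts (a) and (b) applied verbatim as above but with general $\frx$: (a) gives $\le$ and (b), using $a(\_,x)\in\hat X$, gives $\ge$. Hence $b(T\!\yoneda(\frx),\yoneda(x))=a(\frx,x)$ identically, which is the fully-faithful condition; faithfulness alone (injectivity on objects up to the equivalence $\simeq$ induced by $a$) is then automatic, and fullness follows since every $(\T,\V)$-functor $(1,p)\to\hat X$ picking out some $\varphi\in\hat X$ that is "$\yoneda$-close" to $\yoneda(x)$ must coincide with it by the equality just established. I would therefore phrase the proof as: "By the Yoneda Theorem (a)+(b) applied to $\varphi=a(\_,x)\in\hat X$, one has $\fspstr{m_X}{\hom_\xi}(T\!\yoneda(\frx),\yoneda(x))=a(\frx,x)$ for all $\frx\in TX$ and $x\in X$; hence $\yoneda^\circ\cdot\hat a\cdot T\!\yoneda=a$, i.e. $\yoneda$ is full and faithful."
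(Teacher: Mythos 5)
Your proposal is correct and is essentially the argument the paper intends: the corollary follows at once from parts (a) and (b) of the Yoneda Theorem applied to $\varphi=\yoneda(x)=a(\_,x)$, which lies in $\hat{X}$ by Theorem \ref{TensorTV} together with the hypothesis $T1=1$, giving $\fspstr{m_X}{\hom_\xi}(T\!\yoneda(\frx),\yoneda(x))=a(\frx,x)$ for all $\frx\in TX$ and $x\in X$, i.e.\ $a=\yoneda^\circ\cdot\hat{a}\cdot T\!\yoneda$. Your closing formulation with arbitrary $\frx\in TX$ (rather than only $\frx=e_X(x)$, as in your opening plan) is indeed the form needed for full fidelity of a $(\T,\V)$-functor, and you correctly identified and repaired that point yourself.
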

If $Te_X\cdot e_X=m_X^\circ\cdot e_X$, we might also consider the
transpose $\yoneda_0=\mate{a}:X\to\V^{X^\op}$ of $a:X^\op\otimes
X\to\V$ as below. However, unlike the situation for $\V$-categories,
in general we do not have $\hat{X}\cong\V^{X^\op}$ (see example
below).
\begin{proposition}[Yoneda II]
Assume that $Te_X\cdot e_X=m_X^\circ\cdot e_X$ and let $X=(X,a)$ be a $(\T,\V)$-category. Then the following assertions hold.
\begin{enumerate}
\item[(a)] For all $\frx\in TX$ and $\varphi\in\V^{X^\op}$, $\fspstr{a^\op}{\hom_\xi}(T\!\yoneda_0(\frx),\varphi)\ge\varphi(\frx)$.
\item[(b)] Let $\frx\in TX$ such that $T a\cdot e_{TX}(\frx,\frx)\ge k$. Then $\fspstr{a^\op}{\hom_\xi}(T\!\yoneda_0(\frx),\varphi)\le\varphi(\frx)$.
\end{enumerate}
\end{proposition}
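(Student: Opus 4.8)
The plan is to mimic the computation carried out in the proof of the Yoneda Theorem \ref{YonedaLemma}, but now with the dual structure $a^\op$ in place of $m_X$ on the ``$|X|$''-side. Concretely, I would first unravel $\fspstr{a^\op}{\hom_\xi}(T\!\yoneda_0(\frx),\varphi)$ using the definition of the function-space structure $\fspstr{\cdot}{\cdot}$ recalled in Section~4.1, together with the commuting square relating $\ev$ and $\yoneda_0$ (the analogue of the two diagrams at the start of the proof of \ref{YonedaLemma}, now for $\yoneda_0=\mate{a}:X\to\V^{X^\op}$). Since $a:X^\op\otimes X\to\V$ is a $(\T,\V)$-functor (by Theorem~\ref{TensorTV}, because $a:X\bim X$ is a bimodule), the triangle $\ev\cdot(\id_{TX}\times\yoneda_0)=a$ commutes and the corresponding square over $\pi_2$ is a pullback; this lets me replace $\xi\cdot T_{\o}\ev(\frW)$ by $\xi\cdot T_{\o}a(\frW)$ inside the supremum, exactly as in \ref{YonedaLemma}. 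Using the (ccd) distributivity of $\V$ and the formula of Proposition~\ref{TAlgStr} to pull the supremum over $\frW$ inside $\xi\cdot T_\o a$ and recognise it as $Ta$, the expression should collapse to something of the shape
\[
\fspstr{a^\op}{\hom_\xi}(T\!\yoneda_0(\frx),\varphi)=\bigvee\{v\in\V\mid\forall\fry\in TX\;\; a^\op(\fry,\frx)\otimes v\le\varphi(\fry)\},
\]
i.e.\ $v$ ranges over the ``upper bounds'' for which $\varphi$ dominates $a^\op(-,\frx)\otimes v$ on all of $TX$. Here $a^\op=e_{TX}^\circ\cdot Tm_X\cdot T^2a^\circ$ is the structure on $X^\op$ from Section~\ref{DualCat}, and the hypothesis $Te_X\cdot e_X=m_X^\circ\cdot e_X$ is precisely what is needed for $Y^{X^\op}$ to exist (the Proposition of Section~4.1) and for the transpose $\yoneda_0$ to be well defined.

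For part~(a), the inequality $\fspstr{a^\op}{\hom_\xi}(T\!\yoneda_0(\frx),\varphi)\ge\varphi(\frx)$ should follow by exhibiting $v:=\varphi(\frx)$ as an admissible value in the supremum: I must check that $a^\op(\fry,\frx)\otimes\varphi(\frx)\le\varphi(\fry)$ for all $\fry\in TX$, and this is exactly the statement that $\varphi:X^\op\to\V$ is a $(\T,\V)$-functor — which is part of the definition of $\V^{X^\op}$, so every $\varphi\in\V^{X^\op}$ satisfies it. This is the easy direction and should be essentially immediate once the unravelling above is in place; note the contrast with \ref{YonedaLemma}(a), where the analogous step produced the reverse inequality because there one evaluates at $\frx=\frx$ using reflexivity $k\le Ta\cdot m_X^\circ(\frx,\frx)$ rather than appealing to functoriality of $\varphi$.

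For part~(b), I want the reverse inequality under the extra assumption $Ta\cdot e_{TX}(\frx,\frx)\ge k$, i.e.\ $\frx$ is ``reflexive for $Ta$''. The idea is: for any $v$ admissible in the supremum, instantiate $\fry:=\frx$ to get $a^\op(\frx,\frx)\otimes v\le\varphi(\frx)$, and then show $a^\op(\frx,\frx)\ge k$ so that $v=k\otimes v\le a^\op(\frx,\frx)\otimes v\le\varphi(\frx)$; taking the supremum over all admissible $v$ yields $\fspstr{a^\op}{\hom_\xi}(T\!\yoneda_0(\frx),\varphi)\le\varphi(\frx)$. So the real content is to derive $a^\op(\frx,\frx)\ge k$ from $Ta\cdot e_{TX}(\frx,\frx)\ge k$. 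Unwinding $a^\op(\frx,\frx)=(e_{TX}^\circ\cdot Tm_X\cdot T^2a^\circ)(\frx,\frx)=\bigvee\{T^2a^\circ\cdot Tm_X^\circ(\frx,\frX)\mid \frX\in T^2X,\ e_{TX}(\frx)=\frX\}$, which since $e_{TX}$ is injective picks out $\frX=e_{TX}(\frx)$, so $a^\op(\frx,\frx)=T^2a(e_{TX}(\frx),?)$ composed appropriately with $Tm_X^\circ$; using naturality of $e$ (so $Tm_X^\circ\cdot e_{TX}=e_{T^2X}\cdot m_X^\circ$ type identities) and the extension axiom $Te_X\cdot a\le Ta\cdot e_X$ — or more directly the hypothesis $Te_X\cdot e_X=m_X^\circ\cdot e_X$ — one should be able to rewrite $a^\op(\frx,\frx)$ in terms of $Ta\cdot e_{TX}(\frx,\frx)$ and conclude it is $\ge k$.

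The main obstacle I anticipate is precisely this last diagram chase: matching up the dual structure $a^\op=e_{TX}^\circ\cdot Tm_X\cdot T^2a^\circ$ with the reflexivity-type quantity $Ta\cdot e_{TX}$, keeping careful track of which functor is $T$ (the $\Mat{\V}$-extension) versus $T_{\o}$ (the $\Set$-functor), and making legitimate use of the standing assumptions that $m$ satisfies (BC) and $Te_X\cdot e_X=m_X^\circ\cdot e_X$. If the direct rewriting is awkward, an alternative is to observe that the condition $Ta\cdot e_{TX}(\frx,\frx)\ge k$ says exactly that the point $\frx$, viewed via $e_{TX}$, lies in the ``unit part'' of $T(X,a)$ as a $\V$-category, and then invoke the description of $X^\op$ in Section~\ref{DualCat} as the $\V$-categorical dual of $T(X,a)$ — under which $a^\op(\frx,\frx)=Ta(e_{TX}(\frx),\frx)$ by reflexivity of the $\V$-category $(TX,Ta)$ together with $m_X^\circ\cdot e_{TX}=Te_X\cdot e_{TX}$ (the doubled form of the hypothesis). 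Either route reduces (b) to a one-line inequality, so the proof should be short once the unravelling of $\fspstr{a^\op}{\hom_\xi}$ is done.
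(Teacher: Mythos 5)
Your high-level plan (unravel $\fspstr{a^\op}{\hom_\xi}$ as in \ref{YonedaLemma}, use membership in $\V^{X^\op}$ for (a), instantiate at $\fry=\frx$ with the reflexivity hypothesis for (b)) is the paper's plan, but the proposal has a genuine gap at its centre: the claimed collapse of the function-space value is wrong. In \ref{YonedaLemma} the collapse to $Ta\cdot m_X^\circ(\fry,\frx)$ works because the domain structure $m_X$ is a \emph{map}; here the domain structure $a^\op$ is a genuine matrix, while the evaluation still contributes $a$, and what the unravelling actually yields is
\[
\fspstr{a^\op}{\hom_\xi}(T\!\yoneda_0(\frx),\varphi)=\bigvee\Bigl\{v\in\V\;\Bigl\lvert\;\forall\fry\in TX\;\;a^\op\cdot Ta^\circ(\frx,\fry)\otimes v\le\varphi(\fry)\Bigr\},
\]
i.e.\ the coefficient is $\bigvee_{\frY\in T^2X}Ta(\frY,\frx)\otimes a^\op(\frY,\fry)$, not your ``$a^\op(\fry,\frx)$'' (which is not even well typed, since $a^\op:T(TX)\rel TX$). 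The whole content of the paper's proof is then the simplification $a^\op\cdot Ta^\circ=e_{TX}^\circ\cdot Tm_X\cdot T^2a^\circ\cdot Ta^\circ=e_{TX}^\circ\cdot Ta^\circ\le m_X\cdot Ta^\circ$, which rests on $Tm_X\cdot T^2a^\circ\cdot Ta^\circ=Ta^\circ$, i.e.\ on $a\ast a=a$. This is exactly the ``matching $a^\op$ with $Ta\cdot e_{TX}$'' chase that you flag as the anticipated obstacle and postpone; so the key step is missing rather than deferred. Once it is done, (b) is immediate and needs none of your extra work: putting $\fry:=\frx$ in the correct formula gives the coefficient $e_{TX}^\circ\cdot Ta^\circ(\frx,\frx)=Ta\cdot e_{TX}(\frx,\frx)\ge k$, which is literally the hypothesis of (b).

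For (a) your justification also needs repair. By definition the underlying set of $\V^{X^\op}$ consists of $(\T,\V)$-functors $X^\op\otimes(1,p)\to\V$, not of functors $X^\op\to\V$; the two agree only when $T1=1$, which is not assumed in this proposition (the paper is careful about exactly this point when restricting $\yoneda$ to $\hat{X}$, and Theorem \ref{YonedaLemma}(b) shows that ``$\varphi:X^\op\to\V$ is a functor'' is a genuinely extra condition on elements of $\V^{|X|}$). With the correct coefficient, what (a) requires is $Ta(e_{TX}(\fry),\frx)\otimes\varphi(\frx)\le\varphi(\fry)$ for all $\fry$, and this is obtained from the bound $a^\op\cdot Ta^\circ\le m_X\cdot Ta^\circ$ together with the functoriality that membership in $\V^{X^\op}$ does supply; your version, which plugs an unproved identification of $\V^{X^\op}$ into an incorrect formula, does not establish it. In short: right skeleton, but the computation that carries the proof — the evaluation of $\fspstr{a^\op}{\hom_\xi}$ and the identity $a^\op\cdot Ta^\circ=e_{TX}^\circ\cdot Ta^\circ$ — is absent.
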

\begin{proof}
Let $\frx\in TX$ and $\varphi\in\V^{X^\op}$. As above, we obtain
\begin{multline*}
\fspstr{a^\op}{\hom_\xi}(T_{\o}\!\yoneda(\frx),\varphi)\\
\begin{aligned}
&=\bigvee\{v\in\V\mid \forall\fry\in TX,\frY\in T^2X,\frW\in T(TX\times\V^{X^\op})\\
&\qquad\qquad
(T_{\o}\pi_1(\frW)=\frY\,\&\,T_{\o}\pi_2(\frW)=T_{\o}\!\yoneda(\frx)) \,\Rw\,
a^\op(\frY,\frx)\otimes
v\le\hom(\xi\cdot T_{\o}\ev(\frW),\varphi(\fry))\}\\
&=\bigvee\{v\in\V\mid \forall\fry\in TX,\frY\in T^2Y\,.\, T a(\frY,\frx)\otimes a^\op(\frY,\fry)\otimes v\le \varphi(\fry)\}\\
&=\bigvee\{v\in\V\mid \forall\fry\in TX\;\;a^\op\cdot T a^\circ(\frx,\fry)\otimes v\le\varphi(\fry)\}.
\end{aligned}
\end{multline*}
Furthermore, we have
\[
a^\op\cdot T a^\circ=e_{TX}^\circ\cdot Tm_X^\circ\cdot TT a^\circ\cdot T a^\circ=e_{TX}^\circ\cdot T a^\circ\le m_X\cdot T a^\circ.
\]
Hence
$\varphi(\frx)\le\fspstr{a^\op}{\hom_\xi}(T_{\o}\!\yoneda_0(\frx),\varphi)$
and, if $k\le T a\cdot e_{TX}(\frx,\frx)=a^\op\cdot T
a^\circ(\frx,\frx)$, we also have
$\fspstr{a^\op}{\hom_\xi}(T_{\o}\!\yoneda_0(\frx),\varphi)\le\varphi(\frx)$.
\end{proof}
\begin{example}
Unlike $\yoneda$, the functor $\yoneda_0$ does not need to be full
and faithful. In fact, consider $X=\N$ as a $(\U,\two)$-category,
i.e. a topological space, equipped with the discrete topology
$a=e_\N^\circ$. Then $\N^\op$ is the discrete space
$\N^\op=(U\N,e_{U\N}^\circ)$. Let $\frx$ be a free ultrafilter on
$\N$. Then, for each $\fry\in U\N$, $a^\op\cdot U
a^\circ(\frx,\fry)=e_\N^\circ\cdot Ue_\N(\frx,\fry)=\false$ and
therefore $U\!\yoneda_0(\frx)\to\varphi$ for each
$\varphi\in\two^{\N^\op}$. On the other hand, for $\varphi=a(\_,x)$
($x$ any element of $\N$) we have $\varphi(\frx)=\false$. In
particular we see that $\yoneda_0:\N\to\two^{\N^\op}$ is not full
and faithful.
\end{example}

\section{Examples}

\subsection{Ordered sets.}
Recall that $\Cat{\two}=\Ord$. Given an ordered set $X=(X,\le)$, by
Theorem \ref{VfunctVbim} we have that a bimodule $\phi:1\bim X$ is
an order-preserving map $\phi:X\to\two$, while a bimodule
$\psi:X\bim 1$ is an order-preserving map $X^\op\to\two$. We can
identify $\varphi$ with the upclosed set $A=\varphi^{-1}(\true)$ and
$\psi$ with the downclosed set $B=\psi^{-1}(\true)$. Under this
identification, $\varphi\dashv\psi$ translates to
\begin{align*}
A\cap B\neq\varnothing &&\text{and}&& \forall x\in A\;\;\forall y\in
B\;\;y\le x.
\end{align*}
Then any $z\in A \cap B$ is simultaneously a smallest element of $A$
and a largest element of $B$, therefore $z$ represents
$\varphi\dashv\psi$. Hence, by Proposition \ref{LawComplVCat}, each
ordered set is Lawvere-complete. Note that the proof of Proposition
\ref{LawComplVCat} makes use of the Axiom of Choice; in fact, as
pointed out in \cite{BD_CauchyCompl}, here we have no choice.
\begin{theorem}
The following assertions are equivalent.
\begin{enumerate}
\item[\em (i)] Each ordered set is Lawvere-complete.
\item[\em (ii)] The Axiom of Choice.
\end{enumerate}
\end{theorem}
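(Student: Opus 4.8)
The plan is to prove the two implications separately, using the already-established fact (from the discussion preceding the theorem, via Proposition \ref{LawComplVCat}) that Lawvere completeness of an ordered set amounts to the following: whenever $A\subseteq X$ is upclosed, $B\subseteq X$ is downclosed, $A\cap B\neq\varnothing$, and $y\le x$ for all $x\in A$, $y\in B$, there is $z\in X$ lying in $A\cap B$; such a $z$ is automatically a least element of $A$ and a greatest element of $B$. Thus the data of an adjunction $\varphi\dashv\psi:1\bim X$ is exactly a pair $(A,B)$ of this kind, and ``each ordered set is Lawvere-complete'' says precisely that every such pair has a common element.

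\emph{(ii) $\Rightarrow$ (i).} This is the content of the remark already made in the text: given $(A,B)$ as above, pick any $z\in A\cap B$ (a single choice suffices here, but uniformly over all ordered sets one invokes AC, or rather one does not even need it for a single space — the real use of choice is in Proposition \ref{LawComplVCat}'s gluing step, where for a general test object $(Y,b)$ one must choose, for each $y\in Y$, a representative $f_y(\star)\in X$ of the induced adjunction, and glue the $f_y$ into a map $f\colon Y\to X$). So I would simply observe that the argument of Proposition \ref{LawComplVCat}, read in $\Cat{\two}$, goes through verbatim under AC, which establishes (i).

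\emph{(i) $\Rightarrow$ (ii).} This is the interesting direction, and the main obstacle is to cook up, from an arbitrary family of nonempty sets, an ordered set whose Lawvere completeness yields a choice function. Following \cite{BD_CauchyCompl}, I would proceed as follows. Let $(S_i)_{i\in I}$ be a family of nonempty sets; we seek an element of $\prod_i S_i$. Form the ordered set $X$ whose underlying set is $I\sqcup\coprod_{i\in I}S_i$, with order generated by declaring $s< i$ for every $i\in I$ and every $s\in S_i$ (and nothing else among the $S_i$-elements, and the $i$'s pairwise incomparable). Now take $A=I$ (upclosed, since the $i$'s are maximal) and $B=X$ itself, or more carefully a downclosed set meeting every $S_i$; the point is to arrange an adjunction $\varphi\dashv\psi$ whose representative, forced to lie in $A\cap B$, must simultaneously encode a compatible selection. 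The delicate part is that a single adjunction with $1$ only produces a single point, so one instead tests completeness against the $\Cat{\two}$-category $Y=I$ (discrete), building bimodules $\varphi\colon I\bim X$, $\psi\colon X\bim I$ with $\varphi(i)=\{x\in X: x\le i\} = S_i\cup\{i\}$-truncations arranged so that adjointness forces, for each $i$, the representative over $i$ to land in $S_i$. Lawvere completeness of $X$ then hands us a $\two$-functor $f\colon I\to X$ with $f(i)\in S_i$ for all $i$, i.e.\ a choice function. Verifying that the $\varphi,\psi$ so defined are genuinely bimodules and genuinely adjoint — i.e.\ checking $\varphi\cdot a\le\varphi$, $a\cdot\psi\ge$ wait, $b\cdot\psi\le\psi$, $a\le\psi\cdot\varphi$, $\varphi\cdot\psi\le 1_I$ — is the routine-but-fiddly core, and getting the order on $X$ exactly right so that all four inequalities hold while the representative is pinned down into $S_i$ is where the real work lies. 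I would then conclude by noting this construction produces a choice function for an arbitrary family, which is AC.

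I expect the main obstacle to be the bookkeeping in the reverse direction: choosing the test ordered set $X$ and the pair of bimodules delicately enough that (a) they form an adjunction, (b) the representative exists by hypothesis (i), and (c) the representative necessarily encodes a choice function, all simultaneously. The cleanest route is probably to cite the construction of \cite{BD_CauchyCompl} essentially verbatim, since that paper carries out exactly this equivalence for $\V$-categories over $\two$.
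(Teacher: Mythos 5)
Your direction (ii)\,$\Rightarrow$\,(i) is fine and is exactly the paper's reading: representability over the test object $(1,p)$ is choice-free (any $z\in A\cap B$ represents), and AC enters only through the gluing step in the proof of Proposition \ref{LawComplVCat}. The genuine gap is in (i)\,$\Rightarrow$\,(ii). You rightly observe that one must test against a non-singleton $Y$ (here $Y=I$ discrete), but the poset you propose, $X=I\sqcup\coprod_{i\in I}S_i$ with each $S_i$ an antichain placed below $i$, cannot do the job. If $f$ and $f'$ both represent the same adjunction then $f_*=f'_*$ gives $f(i)\le f'(i)\le f(i)$, so the representative is determined up to order-equivalence; since distinct elements of $S_i$ are incomparable, an adjunction whose representative over $i$ lies in $S_i$ must have $\varphi(i,-)$ equal to the principal up-set of one \emph{particular} element of $S_i$ --- that is, writing down the adjunction already presupposes the choice function you are trying to extract. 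The only adjunction definable on your $X$ without choices, namely $\varphi(i,x)=\true$ iff $x=i$ and $\psi(x,i)=\true$ iff $x\in S_i\cup\{i\}$, is represented by $i$ itself and yields nothing. So the ``routine-but-fiddly core'' you defer is not merely unchecked: with the order as you specified it, it fails, and deferring instead to \cite{BD_CauchyCompl} ``verbatim'' is not a proof.

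The repair is to make each fibre codiscrete rather than discrete, and then the auxiliary points $i$ are superfluous --- which is precisely the paper's (much shorter) argument. Given a surjection $f:X\to Y$, equip $Y$ with the discrete order and $X$ with the kernel relation of $f$ (so all elements of a fibre are equivalent). Then $f_*$ and $f^*$ are defined without any choices, and one checks directly that besides $f_*\dashv f^*$ also $f^*\dashv f_*$ holds: surjectivity gives $\Delta_Y\le f_*\cdot f^*$, and $f^*\cdot f_*$ is exactly the kernel relation. If every ordered set is Lawvere-complete, the adjunction $f^*\dashv f_*$ is represented by a monotone map $g:Y\to X$ with $g_*=f^*$; evaluating at $x=g(y)$ gives $f(g(y))=y$, so $g$ is a section of $f$. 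Sections for all surjections is AC. Your framework (test against a discrete index set, force the representative into the fibre) is the right idea, but it only closes once the fibres carry the chaotic preorder.
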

\begin{proof}
To see (ii)$\Rw$(i), let $f:X\to Y$ be a surjective map. We equip
$Y$ with the discrete order $\Delta_Y$ and $X$ with the kernel
relation of $f$; then we have not only $f_*\dashv f^*$ but also
$f^*\dashv f_*$. Hence there exists some $g:Y\to X$ which represents
$f^*\dashv f_*$, and such $g$ necessarily satisfies $f\cdot g=1_Y$.
\end{proof}

\subsection{Metric spaces.}
For $\V=\Pplus$ we have $\Cat{\Pplus}\cong\Met$. Let $X=(X,d)$ be a
metric space. A pair of adjoint bimodules $\varphi\dashv\psi$
corresponds to a pair of non-expansive maps $\varphi:X\to\Pplus$ and
$\psi:X^\op\to\Pplus$ which satisfy
\begin{align*}
\inf_{x\in X}\varphi(x)+\psi(x)=0 &&\text{and}&&\forall x,y\in
X\;\;\psi(y)+\varphi(x)\ge d(y,x).
\end{align*}
As observed in \cite{L}, pairs of adjoint bimodules on $X$
correspond exactly to equivalence classes of Cauchy sequences. To
see this, recall first that a sequence $s=(x_n)_{n\in\N}$ is called
\emph{Cauchy} if
\[\inf_{k\in\N}\sup_{n,n'\ge k}d(x_n,x_{n'})=0.\]
Given a Cauchy sequence $s=(x_n)_{n\in\N}$, we have
\[\inf_{m\in\N}\sup_{n\ge m}d(x_n,x)=\sup_{m\in\N}\inf_{n\ge m}d(x_n,x)\]
as well as
\[\inf_{m\in\N}\sup_{n\ge m}d(x,x_n)=\sup_{m\in\N}\inf_{n\ge m}d(x,x_n),\]
and $s$ gives rise to non-expansive maps
\begin{align*}
\varphi_s:X&\to\Pplus &&\text{and}& \psi_s:X^\op&\to\Pplus.\\
 x&\mapsto \sup_{m\in\N}\inf_{n\ge m}d(x_n,x) &&& x&\mapsto \sup_{m\in\N}\inf_{n\ge m}d(x,x_n)
\end{align*}
One sees easily that $\varphi_s\dashv\psi_s$; moreover, two
equivalent Cauchy sequences induce the same maps.

On the other hand, given an adjunction $\varphi\dashv\psi$, we may
define $s=(x_n)_{n\in\N}$ such that
$\varphi(x_n)+\psi(x_n)\le\frac{1}{n}$, hence
$d(x_n,x_m)\le\frac{1}{n}+\frac{1}{m}$, and therefore $s$ is a
Cauchy sequence. Any two such sequences are equivalent. Furthermore,
$\varphi\le\varphi_s$ as well as $\psi\le\psi_s$, therefore, since
$\varphi\dashv\psi$ and $\varphi_s\dashv\psi_s$, we have even
equality. Starting with a Cauchy sequence $s=(x_n)_{n\in\N}$, then
for any sequence $t=(y_n)_{n\in\N}$ chosen for $\varphi\dashv\psi$
as above we have
\begin{align*}
\inf_{m\in\N}\inf_{k\in\N}\sup_{n\ge k}d(x_n,y_m)=0&&\text{and}&&
\inf_{m\in\N}\inf_{k\in\N}\sup_{n\ge k}d(y_m,x_n)=0,
\end{align*}
hence $s$ and $t$ are equivalent. Finally, $s=(x_n)_{n\in\N}$
converges to $x$ (i.e.\ $s$ is equivalent to $(x)_{n\in\N}$) if and
only if $\varphi_s\dashv\psi_s$ is represented by $x$.

The same argumentation applies also to the case $\V=\Pinf$: pairs of
adjoint bimodules $\varphi\dashv\psi:1\to X$ with $X$ an ultrametric
space correspond precisely to Cauchy sequences in $X$, and
convergence to representability.

\begin{remark}
A notion of non-symmetric Cauchy-sequence was introduced and studied
in \cite{BCRW_VarEnr}.
\end{remark}

\subsection{Topological spaces.}
We consider now $\T=\U=\umonad$ the {\em ultrafilter monad} and
$\V=\two$. As already stated, Proposition \ref{TAlgStr} describes
our extension $U$ in terms of $U_\o:\Set\to\Set$ (for a direct
calculation of $U$, see \cite[Example 6.4]{CH}). Then
$\Cat{(\U,\two)}=\Top$, as it was shown by Barr \cite{B}. By Theorem
\ref{TensorTV}, a bimodule $\varphi:U1\bim X$ from the one-element
space $1$ to a topological space $X$ is essentially a continuous map
$\varphi:X\to\two$ from $X$ into the Sierpinski space $\two$, hence
we can identify it with a closed subset $A\subseteq X$. A bimodule
$\psi:UX\bim 1$ is basically a map $\psi:UX\to\two$ such that
$\mathcal{A}=\psi^{-1}(\true)$ is closed in $|X|$ as well as in
$X^\op$. The topology on $|X|$ is given by the Zariski closure, that
is, $\x\in UX$ is in the closure of $\mathcal{M}\subseteq UX$ if
$\bigcap\mathcal{M}\subseteq\x$. To understand the structure of
$X^\op$, observe first that the order on $M^\circ X$ is given by
\begin{align*}
\x\le\y &\iff \exists\xx\in U^2X\;m_X(\xx)=\x\text{ and }\xx\to\y\\
 &\iff \forall A\in\x,B\in\y\;\;\exists\a\in UA,y\in B\;\a\to y\\
 &\iff \forall A\in\x,B\in\y\;\;\overline{A}\cap B\neq\varnothing.
\end{align*}
Denoting the filter base $\{\overline{A}\mid A\in\x\}$ by $\overline{\x}$, we have
\[
\x\le\y \iff \overline{\x}\subseteq\y.
\]
Hence bimodules $\psi:UX\bim 1$ can be identified with subsets
$\mathcal{A}\subseteq UX$ which are Zariski closed and down-closed
for the order described above. Now $\varphi\dashv\psi$ translates to
\begin{align*}
\exists \x_0\in UX\;\;\x_0\in\mathcal{A}\;\&\;A\in\x_0
&&\text{and}&&\forall\x\in\mathcal{A},x\in A\;\;\x\to x.
\end{align*}
Clearly, each $\x\in\mathcal{A}$ converges to all points of $A$. On
the other hand, for any $\x\in UX$ with this property we have
$\x\le\x_0$ and therefore $\x\in\mathcal{A}$. We conclude that
\[
\mathcal{A}=\{\x\in UX\mid\forall x\in A\;\;\x\to x\}.
\]
A closed subset $A\subseteq X$ admits an ultrafilter $\x_0\in UA$
which converges to all $x\in A$ if and only if $\{V\subseteq X\mid
V\text{ open}, V\cap A\neq\varnothing\}$ is a filter base. In the
language of closed sets this is expressed by saying that $A$ is not
the union of two proper closed subsets, i.e.\ $A$ is {\em
irreducible}. Finally, $\psi$ (and hence $\varphi$) is representable
if and only if $\x_0$ can be chosen principal, that is, if and only
if there exists some point $x_0\in A$ which converges to all $x\in
A$. In conclusion, we have
\begin{theorem}
The following assertions are equivalent for a topological space $X$.
\begin{enumerate}
\item[\em (i)] $X$ is Lawvere-complete.
\item[\em (ii)] Each irreducible closed subset $A\subseteq X$ is of the form
$A=\overline{\{x\}}$ for some $x\in A$, i.e.\ $X$ is weakly sober.
\end{enumerate}
\end{theorem}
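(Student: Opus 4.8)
The plan is to unravel what pairs of adjoint bimodules $\varphi\dashv\psi$ from $(1,p)$ to a topological space $X=(X,a)$ actually encode, using Theorem \ref{TensorTV} as the bridge between bimodules and $(\T,\V)$-functors, and then to recognize the resulting combinatorial condition as weak sobriety. By Proposition \ref{TVcomplete} (applicable since $U1=1$), it suffices to test Lawvere completeness against $(1,p)$, so I only need to analyze such adjunctions.

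First I would identify $\varphi:U1\bim X$. By Theorem \ref{TensorTV}, $\varphi$ being a bimodule is equivalent to $\varphi:|1|\otimes X\to\two$ and $\varphi:1^\op\otimes X\to\two$ being $(\U,\two)$-functors; since $U1=1$, both $|1|$ and $1^\op$ are the terminal $(\U,\two)$-category, so this just says $\varphi:X\to\two$ is continuous, i.e. $A:=\varphi^{-1}(\true)$ is a closed subset of $X$. Next I would identify $\psi:X\bim 1$: again by Theorem \ref{TensorTV}, $\psi$ is a bimodule iff $\psi:|X|\otimes 1\to\two$ and $\psi:X^\op\otimes 1\to\two$ are $(\U,\two)$-functors, i.e. $\mathcal{A}:=\psi^{-1}(\true)\subseteq UX$ is closed both in $|X|=(UX,m_X)$ and in $X^\op$. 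For $|X|$ this is the Zariski closure: $\x$ is in the closure of $\mathcal{M}$ iff $\bigcap\mathcal{M}\subseteq\x$. For $X^\op$ I would compute the underlying order on $M^\circ X=(UX,Ua)$ and find $\x\le\y\iff\overline{\x}\subseteq\y$ (filter base of closures), so $\mathcal{A}$ must be down-closed for this order. Then I spell out the adjunction inequalities $p\le\psi\ast\varphi$ and $\varphi\ast\psi\le p$ via Kleisli composition, obtaining: there is $\x_0\in UX$ with $\x_0\in\mathcal{A}$ and $A\in\x_0$, and every $\x\in\mathcal{A}$ converges to every point of $A$.

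From there I would argue that $\mathcal{A}=\{\x\in UX\mid\forall x\in A,\ \x\to x\}$: the inclusion $\subseteq$ is the adjunction condition just derived, and $\supseteq$ follows because any such $\x$ satisfies $\x\le\x_0$, hence lies in the down-closed set $\mathcal{A}$. The existence of some $\x_0\in UA$ converging to all points of $A$ is equivalent to $\{V\text{ open}\mid V\cap A\neq\varnothing\}$ being a filter base, which in closed-set language is precisely irreducibility of $A$ (not a union of two proper closed subsets). Finally, representability of $\psi$ by a map $f:1\to X$ means $f(\star)=x_0$ is a point of $A$ converging to all of $A$, i.e. $A=\overline{\{x_0\}}$. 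Putting this together: $X$ is Lawvere-complete iff every irreducible closed $A$ has the form $\overline{\{x\}}$, which is weak sobriety.

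The \textbf{main obstacle} is the careful computation of the order on $X^\op=E^\circ(M^\circ(X)^\op)$ for the ultrafilter monad — unwinding $\x\le\y$ through $m_X$ and the extension $Ua$ to arrive at the clean characterization $\overline{\x}\subseteq\y$ — and then correctly reading off the adjunction inequalities through Kleisli composition rather than ordinary matrix composition. The topological translations (Zariski closure on $|X|$, the filter-base/irreducibility equivalence) are routine once the order is pinned down, but getting the down-closure condition exactly right is where care is needed; everything else is bookkeeping with the equivalences from Theorem \ref{TensorTV} and Proposition \ref{TVcomplete}.
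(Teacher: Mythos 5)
Your proposal follows essentially the same route as the paper's own argument: reduce to testing against $(1,p)$, use Theorem \ref{TensorTV} to identify $\varphi$ with a closed set $A$ and $\psi$ with a Zariski-closed, down-closed $\mathcal{A}\subseteq UX$ (with the order $\x\le\y\iff\overline{\x}\subseteq\y$ on $X^\op$), read off the adjunction conditions, and translate irreducibility and representability exactly as the paper does. The proof is correct and matches the paper's in structure and detail.
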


\subsection{Approach spaces.}
Recall that $\Ap=\Cat{(\U,\Pplus)}$ is the category of approach
spaces and non-expansive maps. We fix an approach space $X=(X,a)$.
As above, a bimodule $\varphi:U1\bim X$ is a non-expansive map
$\varphi:X\to\Pplus$, by Theorem \ref{TensorTV}. There is a
bijective correspondence between maps $\varphi:X\to\Pplus$ and
families $(A_v)_{v\in\Pplus}$ of subsets $A_v\subseteq X$ satisfying
\begin{equation}\label{ApCond:1}
A_v=\bigcap_{u>v}A_u,
\end{equation}
where $\varphi\mapsto (\varphi^{-1}([0,v]))_{v\in\Pplus}$ and a
family $(A_v)_{v\in\Pplus}$ defines the map
$x\mapsto\inf\{v\in\Pplus\mid x\in A_v\}$. Under this bijection,
non-expansive maps correspond precisely to those families
$(A_v)_{v\in\Pplus}$ which satisfy in addition
\begin{equation}\label{ApCond:2}
\forall u,v\in\Pplus\,\forall x\in X\;(d(A_u,x)\le v \Rw x\in A_{u+v}),
\end{equation}
where $d(A,x)=\inf\{a(\x,x)\mid \x\in UA\}$.

We may think of the family $A=(A_v)_{v\in\Pplus}$ satisfying
\eqref{ApCond:1} as a \emph{variable set}\footnote{In fact, we may
consider $A:\Pplus\to\Set$ as a sheaf where, for each $u\in\Pplus$,
$\{v<u\}$ is a cover of $u$.}; we call $A$ \emph{closed} if it
satisfies \eqref{ApCond:2}. Now it is not difficult to see that a
right adjoint $\psi:X\bim 1$ to $\varphi:1\bim X$ is determined by
the variable set $\mathcal{A}=(\mathcal{A}_v)_{v\in\Pplus}$ given by
\[
\mathcal{A}_v=\{\x\in UX\mid \forall u\in\Pplus\;\;\forall x\in
A_u\;\; a(\x,x)\le u+v\},
\]
for each $v\in\Pplus$. Furthermore, given $\varphi:1\bim X$, the
variable set $\mathcal{A}$ defined as above corresponds to a right
adjoint of $\varphi$ if and only if
\begin{equation}\label{ApCond:3}
\forall u\in\Pplus\;(u>0\Rw UA_u\cap \mathcal{A}_u\neq\varnothing).
\end{equation}
In analogy to the situation in $\Top$, we call a variable set $A$
\emph{irreducible} if it satisfies \eqref{ApCond:3}. Finally, we
remark that the bimodule $\varphi:1\bim X$ is represented by $x\in
X$ precisely if the corresponding variable set $A$ is of the form
\[
A_v=\{y\in X\mid d(x,y)\le v\},
\]
for each $v\in\Pplus$. Naturally, we say that such a variable set is
\emph{representable} (by $x$).
\begin{theorem}
The following assertions are equivalent for an approach space $X$.
\begin{enumerate}
\item[\em (i)] $X$ is Lawvere-complete.
\item[\em (ii)] Each irreducible closed variable set $A$ is representable.
\end{enumerate}
\end{theorem}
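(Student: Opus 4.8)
The plan is to mimic the analogous topological-spaces result by translating the abstract condition ``each irreducible closed variable set is representable'' into the concrete condition ``each pair of adjoint bimodules $\varphi\dashv\psi$ with domain $(1,p)$ is represented by a point'', and then invoke Proposition \ref{TVcomplete} to pass from this special case to full Lawvere completeness. Note that for $\T=\U$ we have $U1=1$, so the hypothesis $T1=1$ of Proposition \ref{TVcomplete} is automatically satisfied and the three conditions listed there are equivalent; thus it suffices to treat bimodules $\varphi:(1,p)\bim(X,a)$ and $\psi:(X,a)\bim(1,p)$.

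First I would make the dictionary explicit. By Theorem \ref{TensorTV}, a bimodule $\varphi:1\bim X$ is the same as a $(\T,\V)$-functor $\varphi:X\to\V$, i.e.\ a non-expansive map $\varphi:X\to\Pplus$, and the text already records the bijection between such maps and variable sets $A=(A_v)_{v\in\Pplus}$ satisfying \eqref{ApCond:1}, with \eqref{ApCond:2} corresponding precisely to non-expansiveness. The right adjoint $\psi:X\bim 1$, when it exists, is forced (by uniqueness of adjoints) to be the one described in the text via $\mathcal{A}_v=\{\x\in UX\mid \forall u,\ \forall x\in A_u,\ a(\x,x)\le u+v\}$; the content is that the unit inequality $p\le\psi\ast\varphi$ holds if and only if \eqref{ApCond:3} holds, i.e.\ $A$ is irreducible, while the counit inequality $\varphi\ast\psi\le a\cdot e_1$ is automatic from the construction of $\mathcal{A}$. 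So I would verify these two claims: (a) the candidate $\psi$ built from $\mathcal{A}$ always satisfies the counit inequality, by unwinding Kleisli composition and using the triangle inequality for $a$; (b) the unit inequality is equivalent to \eqref{ApCond:3}. This establishes that pairs of adjoint bimodules $\varphi\dashv\psi:1\to X$ correspond bijectively to irreducible closed variable sets.

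Next I would identify representability. A bimodule $\varphi:1\bim X$ is induced by a map $x:1\to X$ precisely when $\varphi=a\cdot Tx$, i.e.\ $\varphi(\x)=a(\x,\text{-})$ evaluated appropriately; translating through the variable-set dictionary, $\varphi$ corresponds to $A_v=\{y\in X\mid d(x,y)\le v\}$ — which is exactly the notion of ``representable by $x$'' recorded in the text. Since the right adjoint is unique, ``$\varphi\dashv\psi$ is induced by a map $f:1\to X$'' is the same as ``the associated irreducible closed variable set is representable''. Combining with condition (iii) of Proposition \ref{TVcomplete}, we get: $X$ is Lawvere-complete $\iff$ each irreducible closed variable set is representable, which is the theorem.

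I expect the main obstacle to be part (b) above: carefully checking that the unit inequality $p\le\psi\ast\varphi$ unwinds exactly to \eqref{ApCond:3}. This requires expanding $\psi\ast\varphi=\psi\cdot T\varphi\cdot m_1^\circ$ in the $(\U,\Pplus)$-setting, using the explicit description of the ultrafilter extension $U$ from \ref{Text}/Proposition \ref{TAlgStr} and the $\T$-algebra structure $\xi$ on $\Pplus$ (here $\xi$ sends an ultrafilter on $[0,\infty]$ to its ``limit infimum'' type value $\bigvee\{v\mid \x\in U(\uparrow v)\}$), and matching the resulting infimum-of-suprema expression against the requirement that $UA_u\cap\mathcal{A}_u\ne\varnothing$ for all $u>0$. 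The delicate point is the role of the strict inequality $u>0$ (equivalently the ``$\bigcap_{u>v}$'' in \eqref{ApCond:1}), which is what makes the ultrafilter witnessing irreducibility exist; the verification will need the finite-intersection/ultrafilter argument exactly as in the $\Top$ case, but now indexed by the approach structure. Everything else — the counit inequality, the bijections, and the final assembly via Proposition \ref{TVcomplete} — should be routine once the dictionary is set up.
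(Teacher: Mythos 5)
Your proposal follows essentially the same route as the paper: identify left adjoint bimodules $\varphi:1\bim X$ with closed variable sets via Theorem \ref{TensorTV}, take the canonical candidate right adjoint given by $\mathcal{A}_v=\{\x\in UX\mid \forall u\,\forall x\in A_u\; a(\x,x)\le u+v\}$, observe that the adjunction holds precisely when \eqref{ApCond:3} (irreducibility) does, match representability of the bimodule with representability of the variable set, and conclude by Proposition \ref{TVcomplete} (using $U1=1$). The verifications you defer are exactly the ones the paper also treats as routine, so the argument is correct and in the paper's spirit (only note the counit inequality should read $\varphi\ast\psi\le a$, not $\varphi\ast\psi\le a\cdot e_1$).
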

We point out that this setting satisfies the conditions of Theorem
\ref{VTVcomplete}, therefore it assures that $\Pplus$ is
Lawvere-complete.
\begin{remark}
The notion of approach frame and its connection with approach spaces
was recently studied by Christophe Van Olmen in his PhD thesis
\cite{Olm_AppFrame}. In particular, the concept of \emph{sober
approach space} as a fixed point of the dual adjunction between
$\Ap$ and the category $\categ{AFrm}$ of approach frames and
homomorphisms was introduced. As confirmed by the author of
\cite{Olm_AppFrame}, these are precisely the approach spaces where
each irreducible closed variable set is uniquely representable.
\end{remark}

\section{Appendix: Lawvere-complete quasi-uniform spaces}

\subsection{Cauchy-complete quasi-uniform spaces.}
We recall that a {\em quasi-uniformity} $U$ on a set $X$ is a set of
binary relations on $X$ such that:
\begin{itemize}
\item[] $\forall u\in U\;\;\Delta\subseteq u$;
\item[] $\forall u\in U\;\;\exists v\in U\;\; v\cdot v\subseteq u$.
\end{itemize}
The pair $(X,U)$ is called a {\em quasi-uniform space}; it is a {\em
uniform space} when, for all $u\in U$, $u^{-1}\in U$. Given
quasi-uniform spaces $(X,U)$ and $(Y,V)$, a map $f:X\to Y$ is {\em
uniformly continuous} if
\[\forall v\in V\;\;\exists u\in U\;\;\forall x,y\in
X\;\;x\,u\,y\;\Rightarrow\;f(x)\,v\,f(y).\]

\begin{definition}
Let $(X,U)$ be a quasi-uniform space.
\begin{enumerate}
\item[1.] A pair $(\ff,\gg)$ is a {\em filter} in $(X,U)$ if $\ff$
and $\gg$ are filters in $X$ such that
\[\forall F\in\ff\;\;\forall G\in\gg\;\;F\cap G\neq\varnothing.\]
\item[2.] A filter $(\ff,\gg)$ in $(X,U)$ is a {\em Cauchy filter}
if
\[\forall u\in U\;\;\exists F\in\ff\;\;\exists G\in\gg\;\;F\times
G\subseteq X_u:=\{(x,x')\,|\,x\,u\,x'\}.\]
\item[3.] A filter $(\ff,\gg)$ in $(X,U)$ {\em converges to }$x_0\in
X$ if
\[\forall u\in U\;\;\exists F\in\ff\;\;\exists G\in\gg\;\; F\times
G\subseteq X_{-ux_0}\times X_{x_0u-},\] where $X_{-ux_0}:=\{x\in
X\,|\,x\,u\,x_0\}$ and $X_{x_0u-}:=\{x\in X\,|\,x_0\,u\,x\}$.
\end{enumerate}
\end{definition}

\begin{lemma}
Given a quasi-uniformity $U$ in $X$ and $x_0\in X$, the
neighbourhood filter of $x_0$ \[(\{X_{-ux_0}\,|\,u\in
U\},\{X_{x_0u-}\,|\,u\in U\})\] is a minimal Cauchy filter in
$(X,U)$.
\end{lemma}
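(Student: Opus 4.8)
The plan is to check the three things the statement asks for, in order: that the pair is a filter in $(X,U)$, that it is a Cauchy filter, and that it is \emph{minimal} among Cauchy filters, i.e.\ that no strictly coarser Cauchy filter exists. Throughout, write $\ff$ for the filter generated by the family $\{X_{-ux_0}\mid u\in U\}$ and $\gg$ for the filter generated by $\{X_{x_0u-}\mid u\in U\}$; these are genuine (proper) filters because every finite intersection of basic sets contains $x_0$, using $\Delta\subseteq u$. The one observation that drives the whole argument is exactly this: since $\Delta\subseteq u$ for all $u\in U$, the point $x_0$ lies in every $X_{-ux_0}$ and every $X_{x_0u-}$, hence in every member of $\ff$ and of $\gg$.

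With that in hand, the first two parts are immediate. For any $F\in\ff$ and $G\in\gg$ we have $x_0\in F\cap G$, so $(\ff,\gg)$ is a filter in $(X,U)$. For the Cauchy property, given $u\in U$ choose $v\in U$ with $v\cdot v\subseteq u$; then for $x\in X_{-vx_0}$ and $x'\in X_{x_0v-}$ we have $x\,v\,x_0$ and $x_0\,v\,x'$, hence $x\,(v\cdot v)\,x'$ and so $x\,u\,x'$, which says $X_{-vx_0}\times X_{x_0v-}\subseteq X_u$. (Taking instead $F=X_{-ux_0}$ and $G=X_{x_0u-}$ shows at once that $(\ff,\gg)$ converges to $x_0$, so the neighbourhood filter is the minimal Cauchy filter carried by a convergent point — the real content of the lemma in the intended application.)

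For minimality, suppose $(\ff',\gg')$ is a Cauchy filter with $\ff'\subseteq\ff$ and $\gg'\subseteq\gg$; I will show both reverse inclusions. By the observation above, every member of $\ff'$ and of $\gg'$ contains $x_0$. Fix $u\in U$. Since $(\ff',\gg')$ is Cauchy there are $F\in\ff'$ and $G\in\gg'$ with $F\times G\subseteq X_u$. From $x_0\in G$ we get $x\,u\,x_0$ for every $x\in F$, i.e.\ $F\subseteq X_{-ux_0}$, so $X_{-ux_0}\in\ff'$; symmetrically $x_0\in F$ gives $G\subseteq X_{x_0u-}$ and hence $X_{x_0u-}\in\gg'$. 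As $u\in U$ was arbitrary, $\ff\subseteq\ff'$ and $\gg\subseteq\gg'$, whence $(\ff',\gg')=(\ff,\gg)$.

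I do not expect a genuine obstacle here: everything is formal once one notes that $\Delta\subseteq u$ forces $x_0$ into every neighbourhood set. The only points that need a moment's care are keeping straight the direction of composition in $v\cdot v$ (so that the Cauchy estimate really composes through $x_0$ the right way) and pinning down \emph{minimal} as meaning inclusion-minimal in the poset of Cauchy filters, with the order taken componentwise on the pair $(\ff,\gg)$; both are routine.
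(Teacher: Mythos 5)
Your proof is correct. The paper states this lemma without proof, so there is nothing to compare against; your argument is the natural one and it is sound: the single observation that $\Delta\subseteq u$ puts $x_0$ in every $X_{-ux_0}$ and every $X_{x_0u-}$ gives the filter property, composing through $x_0$ via $v\cdot v\subseteq u$ gives the Cauchy property (and convergence to $x_0$), and for minimality the fact that $x_0$ lies in every member of any coarser pair $(\ff',\gg')$ lets you upgrade a Cauchy pair $F\times G\subseteq X_u$ to $F\subseteq X_{-ux_0}$ and $G\subseteq X_{x_0u-}$, forcing the reverse inclusions. Your reading of ``minimal'' as componentwise-inclusion-minimal among Cauchy pairs agrees with how the paper uses it in the subsequent theorem (whose minimality step, by contrast, has to argue contrapositively through the bimodule property of $\Psi$ rather than through a distinguished point), and your care in taking the filters \emph{generated} by the two families covers the fact that the paper's two axioms for a quasi-uniformity do not by themselves make those families closed under finite intersection.
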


\begin{proposition}
For a quasi-uniform space $(X,U)$, the following conditions are
equivalent.
\begin{enumerate}
\item[\em (i)] Every Cauchy filter converges.
\item[\em (ii)] Every minimal Cauchy filter is the
neighbourhood filter of a point $x_0$.
\end{enumerate}
\end{proposition}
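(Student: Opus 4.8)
The plan is to base both implications on the Lemma just proved together with the fact that every Cauchy filter $(\ff,\gg)$ has a \emph{minimal} Cauchy filter below it, obtained by a ``rounding'' procedure.

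For (i) $\Rightarrow$ (ii): I would let $(\ff,\gg)$ be a minimal Cauchy filter. By (i) it converges to some $x_0\in X$, which by definition of convergence means exactly that $X_{-ux_0}\in\ff$ and $X_{x_0u-}\in\gg$ for every $u\in U$; hence the neighbourhood filter of $x_0$ is coarser than $(\ff,\gg)$, and it is itself a Cauchy filter by the Lemma. Minimality of $(\ff,\gg)$ then forces the two to coincide, so $(\ff,\gg)$ is the neighbourhood filter of $x_0$.

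For (ii) $\Rightarrow$ (i): given a Cauchy filter $(\ff,\gg)$, I would take $\ff_0$ to be the filter with base $\{X_{-vF}\mid v\in U,\,F\in\ff\}$, where $X_{-vF}:=\{x\in X\mid\exists x'\in F,\;x\,v\,x'\}$, and $\gg_0$ the filter with base $\{X_{Gv-}\mid v\in U,\,G\in\gg\}$, where $X_{Gv-}:=\{y\in X\mid\exists y'\in G,\;y'\,v\,y\}$. Since $\Delta\subseteq v$ we get $F\subseteq X_{-vF}$ and $G\subseteq X_{Gv-}$, so $\ff_0\subseteq\ff$, $\gg_0\subseteq\gg$, and in particular $(\ff_0,\gg_0)$ is again a filter in $(X,U)$. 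There are then three things to check. First, $(\ff_0,\gg_0)$ is Cauchy: given $u\in U$, choose (applying the second axiom twice and using $\Delta\subseteq v$) some $v\in U$ with $v\cdot v\cdot v\subseteq u$, then $F\in\ff$ and $G\in\gg$ with $F\times G\subseteq X_v$; for $x\in X_{-vF}$ and $y\in X_{Gv-}$ there are witnesses $x'\in F$, $y'\in G$ with $x\,v\,x'$, $x'\,v\,y'$, $y'\,v\,y$, hence $x\,u\,y$, so $X_{-vF}\times X_{Gv-}\subseteq X_u$. Second, $(\ff_0,\gg_0)$ is minimal: if $(\mathfrak{h}_1,\mathfrak{h}_2)$ is a Cauchy filter with $\mathfrak{h}_1\subseteq\ff_0$ and $\mathfrak{h}_2\subseteq\gg_0$, then for a basic $X_{-vF}\in\ff_0$ I would pick $w\in U$ with $w\cdot w\subseteq v$ and $H_1\in\mathfrak{h}_1$, $H_2\in\mathfrak{h}_2$ with $H_1\times H_2\subseteq X_w$; since $H_2\in\mathfrak{h}_2\subseteq\gg$ and $F\in\ff$, the defining cross-intersection property of a filter pair yields some $z\in F\cap H_2$, and then $h\,w\,z$, hence $h\,v\,z$, for all $h\in H_1$ shows $H_1\subseteq X_{-vF}$, so $X_{-vF}\in\mathfrak{h}_1$; the symmetric argument gives $\gg_0\subseteq\mathfrak{h}_2$, whence $(\mathfrak{h}_1,\mathfrak{h}_2)=(\ff_0,\gg_0)$. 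Third, by (ii) the minimal Cauchy filter $(\ff_0,\gg_0)$ is the neighbourhood filter of some $x_0$, so $X_{-ux_0}\in\ff_0\subseteq\ff$ and $X_{x_0u-}\in\gg_0\subseteq\gg$ for all $u\in U$, i.e.\ $(\ff,\gg)$ converges to $x_0$.

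The step I expect to be the main obstacle is the Cauchy verification for $(\ff_0,\gg_0)$: the asymmetry of quasi-uniformities must be respected by placing the ``inward'' set $X_{-vF}$ and the ``outward'' set $X_{Gv-}$ at the correct ends of the chain $x\to x'\to y'\to y$, and one genuinely needs three factors of $v$ (not two), obtained by iterating $v\cdot v\subseteq u$ once more. The minimality argument is the other delicate point; its trick is to extract the witness $z$ from the cross-intersection property of $(\ff,\gg)$ rather than from a diameter condition, as one would in the symmetric (uniform) setting. A minor point worth settling at the outset is whether $U$ is assumed closed under finite intersections or merely a base for such a family; since passing to the generated filter alters neither the Cauchy filters nor convergence, there is no loss in assuming the former, so that the sets $X_{-vF}$ and $X_{Gv-}$ really do form filter bases.
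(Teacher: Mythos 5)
The paper itself gives no proof of this proposition: it is stated in the Appendix and the reader is referred to Fletcher--Lindgren \cite{FL,FL2}, so there is no internal argument to compare yours against. Your proof is correct and self-contained. The implication (i)$\Rightarrow$(ii) is exactly the intended use of the preceding Lemma: convergence of a minimal Cauchy pair $(\ff,\gg)$ to $x_0$ says precisely that the neighbourhood filter of $x_0$ is a coarser Cauchy pair, and minimality forces equality. For (ii)$\Rightarrow$(i) you supply the standard ``envelope'' (round filter) construction from the pair-completion literature: the base $\{X_{-vF}\}$ on the source side and $\{X_{Gv-}\}$ on the target side respects the asymmetry correctly, the chain $x\,v\,x'$, $x'\,v\,y'$, $y'\,v\,y$ with $v\cdot v\cdot v\subseteq u$ gives the Cauchy property, and extracting the witness $z\in F\cap H_2$ (resp.\ $z\in H_1\cap G$) from the cross-intersection property of $(\ff,\gg)$ is the right mechanism for minimality in the non-symmetric setting. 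Two small remarks: in the minimality step the refinement $w\cdot w\subseteq v$ is superfluous, since $\Delta\subseteq w$ already gives $w\subseteq v$ for any such $w$, so you may as well work with $v$ itself; and your closing observation that one may assume $U$ closed under finite intersections (the paper's two axioms do not state it, but passing to the generated filter changes neither Cauchy pairs nor convergence) is worth keeping, since without it the families $\{X_{-vF}\}$ and $\{X_{Gv-}\}$ need not be filter bases.
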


A quasi-uniform space is said to be {\em Cauchy-complete} if it
satisfies any of the equivalent conditions of the Proposition.

For further information see \cite{FL} and \cite{FL2}.

\subsection{Quasi-uniform spaces as lax algebras.}
In order to describe quasi-uniform spaces as lax algebras, we turn
back to the setting described in \cite{CH} and substitute the
bicategory $\Mat{\V}$ of \ref{Text} by the bicategory $\Y$ having
sets as objects and (possibly improper) filters in $\Rel(X,Y)$ as
morphisms, where $\Rel$ is the bicategory of relations. The
composition of two filters $R:X\rel Y$ and $S:Y\rel Z$ is the filter
obtained by pointwise composition of relations $R\cdot S=\{s\cdot
r\,|\,s\in S$ and $r\in R\}$, while $R\leq R'$ whenever $R'\subseteq
R$ (as sets).

We define a {\em lax algebra} now exactly like a $\V$-category: it
is a $\Y$-morphism $A:X\rel X$ such that
\[1_X\leq A\;\;\mbox{ and }\;\; A\cdot A\leq A,\]
or, equivalently,
\[\forall x\in X\;\;\forall a\in A\;\;x\,a\,x\;\;\mbox{ and }\;\;\forall a\in
A\;\exists a'\in A\;\;a'\cdot a'\leq a.\] A {\em lax morphism}
$f:(X,A)\to(Y,B)$ between lax algebras is a map $f:X\to Y$ such that
$f\cdot A\leq B\cdot f$, i.e.
\[\forall b\in B\;\;\exists a\in A\;\;f\cdot a\leq b\cdot f.\]
It was shown in \cite[Theorem 3.6]{CH} that this category of lax
algebras and lax morphisms is equivalent to the category of
quasi-uniform spaces and uniformly continuous maps.

\subsection{Adjoint pairs of bimodules in quasi-uniform spaces.}
A {\em bimodule} $\Psi:(X,A)\bim(Y,B)$ between lax algebras is a
$\Y$-morphism $\Psi:X\rel Y$ such that $\Psi\cdot A\leq\Psi$ and
$B\cdot \Psi\leq\Psi$. As in the context of $\V$-categories, $A$ and
$B$ act as identities for the composition with bimodules, so that a
pair of bimodules $(\Phi:(Y,B)\bim(X,A),(\Psi:(X,A)\bim(Y,B))$ is an
{\em adjoint pair}, with $\Phi\dashv\Psi$, if $B\leq \Psi\cdot\Phi$
and $\Phi\cdot\Psi\leq A$. As before, every lax morphism
$f:(X,A)\to(Y,B)$ defines a pair of adjoint bimodules $(f_*=B\cdot
f:(X,A)\bim(Y,B), f^*=f^\circ\cdot A:(Y,B)\bim(X,A))$. It is easy to
check that Proposition \ref{TVcomplete} is still valid in this
context.

\begin{proposition}
For a lax algebra $(X,A)$, the following conditions are equivalent.
\begin{enumerate}
\item[\em (i)] Each pair of adjoint bimodules $(\Phi:(Y,B)\bim(X,A))\dashv(\Psi:(X,A)\bim(Y,B))$
 is induced by a lax morphism
$(Y,B)\to(X,A)$.
\item[\em (ii)] Each pair of adjoint bimodules $(\Phi:1\bim(X,A))\dashv(\Psi:(X,A)\bim 1)$
is induced by a lax morphism $1\to(X,A)$ (or simply a map).
\end{enumerate}
\end{proposition}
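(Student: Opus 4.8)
The plan is to follow the same strategy already established for $(\T,\V)$-categories in Proposition \ref{TVcomplete}, simply transcribing the argument into the filter-valued setting of the bicategory $\Y$. The only implication requiring work is (ii)~$\Rightarrow$~(i); the converse is trivial since any pair of adjoint bimodules $\Phi\dashv\Psi$ with domain $1$ can be obtained from the corresponding one with domain $(Y,B)$ by whiskering along the inclusion lax morphism $g_y:1\to(Y,B)$ picking a point $y$, using that whiskering preserves adjunctions (the filter-valued analogue of the whiskering computation in \ref{TVfunctTVbim}).

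For (ii)~$\Rightarrow$~(i), I would start with an adjoint pair $(\Phi:(Y,B)\bim(X,A))\dashv(\Psi:(X,A)\bim(Y,B))$. For each $y\in Y$ let $g_y:1\to(Y,B)$ be the lax morphism picking $y$, inducing $(g_y)_*\dashv (g_y)^*$. Composing along $g_y$ yields an adjoint pair $\Phi_y=\Phi\cdot g_y\dashv g_y^\circ\cdot\Psi=\Psi_y$ with domain $1$; here one checks that the filter composition with the $\Set$-map $g_y$ behaves as expected, exactly as the identities $\varphi\ast f_*=\varphi\cdot Tf$ and $f^*\ast\psi=f^\circ\cdot\psi$ did before. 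By hypothesis (ii), each such pair is represented by some map $f_y:1\to X$, i.e.\ $\Phi_y=A\cdot f_y$ and $\Psi_y=f_y^\circ\cdot A$. Gluing the points $f_y(\star)$ gives a map $f:Y\to X$, and then for each $x\in X$ and $y\in Y$ one computes $\Psi(x,y)=\Psi_y(x,\star)=f_y^\circ\cdot A(x,\star)=A(x,f(y))$, so that $\Psi=f^*=f^\circ\cdot A$ as $\Y$-morphisms.

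It remains to check that this $f$ is automatically a lax morphism. This is the step most in need of care, but it mimics the end of the proof of Proposition \ref{TVcomplete}: from $1_X\le A$ and the bimodule inequality $B\cdot\Psi\le\Psi$ we get
\[
B\cdot f^\circ\le B\cdot f^\circ\cdot A=B\cdot\Psi\le\Psi=f^\circ\cdot A,
\]
which, unwound, says precisely $f\cdot A\le B\cdot f$, i.e.\ $f$ is a lax morphism. Finally, uniqueness of adjoints (which holds in $\Y$ just as in $\Mat{\V}$, since the Lemma on adjoint matrices only uses the order structure and lax identities) forces $\Phi=f_*=B\cdot f$. The main obstacle is merely bookkeeping: making sure that the formal manipulations of filters of relations — composition with $\Set$-maps, the order convention $R\le R'\iff R'\subseteq R$, and the behaviour of $(\,)^\circ$ — all satisfy the same formal identities used in \ref{TVfunctTVbim} and \ref{TVcomplete}; once that is granted, the proof is a line-by-line translation, which is why the authors are content to assert that "Proposition \ref{TVcomplete} is still valid in this context."
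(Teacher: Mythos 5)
Your reduction strategy is the intended one (and (i)$\Rightarrow$(ii) is simply the special case $Y=1$, so no whiskering is needed for it --- your justification of that direction is stated backwards, but harmlessly so). The genuine problem is the gluing step. In the bicategory $\Y$ a bimodule $\Psi:(X,A)\bim(Y,B)$ is a \emph{filter} of relations, so the expression $\Psi(x,y)$ is meaningless, and, more importantly, $\Psi$ is \emph{not} determined by its restrictions $g_y^\circ\cdot\Psi$: the hom-posets of $\Y$ do not decompose as products over the points of $Y$, which is exactly the formal property the pointwise computation $\psi(\x,y)=\psi_y(\x,\star)$ in the proof of Proposition \ref{TVcomplete} relies on. Knowing $g_y^\circ\cdot\Psi=f_y^\circ\cdot A$ for every $y$ only yields, for each $a\in A$ and each $y$, some $\psi_{a,y}\in\Psi$ with $\psi_{a,y}(-,y)\subseteq a(-,f(y))$, whereas $\Psi=f^\circ\cdot A$ requires a single $\psi\in\Psi$ working uniformly in $y$ (and dually); for instance the filter of all relations $r:X\rel Y$ each of whose sections $r(-,y)$ is a neighbourhood of $f(y)$ has the same restrictions along all $g_y$ as $f^\circ\cdot A$ but is in general strictly larger. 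This uniformity is precisely what distinguishes the quasi-uniform setting from the matrix-valued ones, so it is not mere bookkeeping: as written, the step $\Psi=f^*$ is unproved, and your verification that $f$ is a lax morphism (which, incidentally, should check $f\cdot B\le A\cdot f$ rather than $f\cdot A\le B\cdot f$) uses this unproved identity.

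The gap can be closed by using the unit and counit of the \emph{global} adjunction $\Phi\dashv\Psi$ instead of the pointwise equalities. From the representations one only needs that every $\phi\in\Phi$ contains the graph $\{(y,f(y))\mid y\in Y\}$ and every $\psi\in\Psi$ contains $\{(f(y),y)\mid y\in Y\}$ (choose $a\in A$ with $a(f(y),-)\subseteq\phi(y,-)$, resp.\ $a(-,f(y))\subseteq\psi(-,y)$, and use reflexivity of $a$). Given $a\in A$, the counit $\Phi\cdot\Psi\le A$ provides $\phi\in\Phi$ and $\psi\in\Psi$ with $\phi\cdot\psi\subseteq a$; combined with the graph facts this gives $\psi\subseteq f^\circ\cdot a$ and $\phi\subseteq a\cdot f$, hence $\Psi\le f^*$ and $\Phi\le f_*$. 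Taking $a'\in A$ with $a'\cdot a'\subseteq a$ and $\phi,\psi$ with $\phi\cdot\psi\subseteq a'$, the unit puts $b:=\psi\cdot\phi$ in $B$, and $y\,b\,y'$ implies $f(y)\,a\,f(y')$, so $f$ is a lax morphism $(Y,B)\to(X,A)$; then $f_*\dashv f^*$, and uniqueness of adjoint bimodules (valid here because $A$ and $B$ are identities for composition with bimodules) upgrades the two inequalities to the required equalities $\Phi=f_*$ and $\Psi=f^*$.
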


\begin{theorem}
For $\Y$-morphisms $\Phi:1\rel X$ and $\Psi:X\rel 1$, the following
conditions are equivalent.
\begin{enumerate}
\item[\em (i)] $\Phi\dashv\Psi$.
\item[\em (ii)] $(\{X_{-\psi\star}\,|\,\psi\in\Psi\},
\{X_{\star\varphi-}\,|\,\varphi\in\Phi\})$ is a minimal Cauchy
filter in $(X,A)$.
\end{enumerate}
\end{theorem}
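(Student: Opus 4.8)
The plan is to unwind both conditions into elementary statements about relations in $X$ and check they match. Recall that a $\Y$-morphism $\Phi:1\rel X$ is a filter $\Phi$ of relations $\varphi\subseteq\{\star\}\times X$, which we may identify with a filter of subsets of $X$ (write $X_{\star\varphi-}=\{x\in X\mid \star\,\varphi\,x\}$); similarly $\Psi:X\rel 1$ is a filter of subsets $X_{-\psi\star}=\{x\in X\mid x\,\psi\,\star\}$. So to $(\Phi,\Psi)$ we associate the pair of filters $(\ff,\gg)$ with $\ff=\{X_{-\psi\star}\mid\psi\in\Psi\}$ and $\gg=\{X_{\star\varphi-}\mid\varphi\in\Phi\}$, exactly as in (ii). The task is to show $\Phi\dashv\Psi$ (i.e.\ $B\le\Psi\cdot\Phi$ and $\Phi\cdot\Psi\le A$) is equivalent to $(\ff,\gg)$ being a \emph{minimal} Cauchy filter in $(X,U)$, where $U=A$ is the quasi-uniformity (equivalently lax algebra) on $X$.

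First I would treat the bimodule conditions $\Phi\cdot A\le\Phi$, $A\cdot\Psi\le\Psi$ (necessary for $\Phi,\Psi$ to be bimodules at all): unwinding the composition in $\Y$ and the ``$\le$ means $\supseteq$'' convention, $\Phi\cdot A\le\Phi$ says that for every $\varphi\in\Phi$ there are $\varphi'\in\Phi$, $a\in A$ with $a\cdot\varphi'\subseteq\varphi$, i.e.\ $X_{\star\varphi'-}$ pushed forward along $a$ lands in $X_{\star\varphi-}$; this is precisely the statement that $\gg$ is ``open'' with respect to $U$ in the appropriate one-sided sense, and dually for $\ff$ via $\Psi$. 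Next, the counit inequality $\Phi\cdot\Psi\le A$ says: for each $a\in A$ (equivalently each $u\in U$) there exist $\psi\in\Psi$, $\varphi\in\Phi$ with $\varphi\cdot\psi\subseteq a$, which unwinds to $X_{-\psi\star}\times X_{\star\varphi-}\subseteq X_u$ — that is exactly the Cauchy condition for $(\ff,\gg)$. Then the unit inequality $B\le\Psi\cdot\Phi$, with $B=A$ here (domain/codomain both carry the structure), gives for each $u\in U$ some $\varphi\in\Phi,\psi\in\Psi$ with $\psi\cdot\varphi\le(\text{something in }A)$ on the one-point side, which forces $X_{\star\varphi-}\times X_{-\psi\star}$ to be small — combined with Cauchyness and the compatibility from the bimodule conditions, this is what pins down \emph{minimality}: no strictly larger pair of filters is again a Cauchy filter compatible in this way. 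So the two unit/counit inequalities split as ``Cauchy'' plus ``minimal,'' and the bimodule axioms supply the one-sided openness that makes $(\ff,\gg)$ a genuine filter in $(X,U)$ in the sense of the Definition.

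For the converse, starting from a minimal Cauchy filter $(\ff,\gg)$, I would define $\Psi$ to be the filter generated by the relations $X_{-\psi\star}:=F\subseteq X$ for $F\in\ff$ (viewed as relations into $1$) and $\Phi$ the filter generated by $G\in\gg$; the bimodule conditions follow because minimality forces $\ff,\gg$ to be closed under the neighbourhood operation induced by $U$ (cf.\ the preceding Lemma on neighbourhood filters being minimal Cauchy), and the adjunction inequalities are then read off from the Cauchy condition (for $\Phi\cdot\Psi\le A$) and minimality (for $A\le\Psi\cdot\Phi$).

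The main obstacle I anticipate is the \textbf{minimality} direction — matching ``$A\le\Psi\cdot\Phi$'' with ``$(\ff,\gg)$ is minimal among Cauchy filters.'' The inequality $1_X\le\Psi\cdot\Phi$ would be the naive guess, but since both objects carry nontrivial structure one actually needs $A\le\Psi\cdot\Phi$, and showing this is equivalent to minimality (rather than merely to $(\ff,\gg)$ being a Cauchy filter) requires the observation that a Cauchy filter $(\ff',\gg')$ refining $(\ff,\gg)$ yields bimodules $\Phi',\Psi'$ with $\Phi'\le\Phi$, $\Psi'\le\Psi$ and still $\Phi'\dashv\Psi'$, whence by uniqueness of adjoints $\Phi=\Phi'$, $\Psi=\Psi'$, i.e.\ $(\ff,\gg)=(\ff',\gg')$ — and conversely that if $A\le\Psi\cdot\Phi$ fails one can enlarge the filters. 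The bookkeeping of the ``$\le$ is $\supseteq$'' convention throughout, and keeping the two one-sided relations $X_{-u x_0}$ versus $X_{x_0 u-}$ straight, is the routine-but-error-prone part; the conceptual content is the minimality $\Leftrightarrow$ unit-inequality correspondence.
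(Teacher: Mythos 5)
Your identification of the counit inequality $\Phi\cdot\Psi\le A$ with the Cauchy condition is correct and agrees with the paper, but the rest of your dictionary is misassigned, and this is a genuine gap rather than a bookkeeping issue. The domain of $\Phi$ is the one-point space, so the unit inequality is $1\le\Psi\cdot\Phi$ with the (trivial) structure on $1$; your ``$B=A$ here'' and the inequality ``$A\le\Psi\cdot\Phi$'' are a type mismatch, since $\Psi\cdot\Phi$ is a $\Y$-morphism $1\rel 1$ while $A$ lives on $X$. Unwound, the unit says only that every set $X_{\star\varphi-}\cap X_{-\psi\star}$ is nonempty, i.e.\ that the pair $(\ff,\gg)$ is a \emph{filter} in $(X,A)$ in the sense of the Definition --- it does not ``pin down minimality,'' and indeed it is satisfied by non-minimal Cauchy pairs (e.g.\ the trace of a neighbourhood filter on a punctured neighbourhood in a uniform space). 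Conversely, the bimodule axioms $\Phi\cdot A\le\Phi$, $A\cdot\Psi\le\Psi$, which you assign to ``one-sided openness / genuine filterness,'' are exactly where minimality lives. In the paper: for (i)$\Rightarrow$(ii) one writes, using that $\Psi$ is a bimodule, $\psi=\psi'\cdot a$, and shows that any filter pair properly contained in $(\{X_{-\psi\star}\},\{X_{\star\varphi-}\})$ fails the Cauchy condition for this $a$; for (ii)$\Rightarrow$(i) the two adjunction inequalities are just ``filter $+$ Cauchy,'' and the bimodule conditions are \emph{deduced} from minimality by checking that the coarser pair generated by the enlargements $\bigcup_{x\in X_{-\psi\star}}X_{-ax}$ and $\bigcup_{y\in X_{\star\varphi-}}X_{ya-}$ is again a Cauchy filter contained in the original, hence equal to it.

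Your fallback mechanism for the minimality direction --- pass from a Cauchy pair $(\ff',\gg')$ comparable with $(\ff,\gg)$ to bimodules $\Phi',\Psi'$ and invoke uniqueness of adjoints --- does not work as stated: a Cauchy pair that is not minimal need not yield bimodules at all (that is precisely the gap between ``Cauchy'' and ``minimal Cauchy''), so there is no adjunction in $\Mod{}$-style to compare; moreover minimality is about Cauchy filters \emph{contained in} $(\ff,\gg)$, not refinements of it, and your sketch conflates the two directions. To repair the proof you should redo the splitting as: unit $\leftrightarrow$ filter, counit $\leftrightarrow$ Cauchy, bimodule axioms $\leftrightarrow$ minimality, and then supply the two concrete arguments above (the $\psi=\psi'\cdot a$ computation for one direction, the enlargement construction plus minimality for the other).
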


\begin{proof}
The conditions $1\leq\Psi\cdot\Phi$ and $\Phi\cdot\Psi\leq A$ read
as
\[\forall\psi\in\Psi\;\;\exists\varphi\in\Phi\;\;X_{\star\varphi-}\cap
X_{-\psi\star}\neq\varnothing,\]
\[\forall a\in A\;\;\exists \varphi\in
\Phi\;\;\exists\psi\in\Psi\;\; X_{-\psi\star}\times
X_{\star\varphi-}\subseteq X_a,\] where the former condition means
that $(\{X_{-\psi\star}\,|\,\psi\in\Psi\},
\{X_{\star\varphi-}\,|\,\varphi\in\Phi\})$ is a filter, while the
latter one means that it is Cauchy.

(i) $\Rightarrow$ (ii): It remains to be shown that this Cauchy
filter is minimal. Let $(\ff,\gg)$ be a filter contained in it. If
$\ff\varsubsetneq\{X_{-\psi\star}\,|\,\psi\in\Psi\}$, i.e. if there
exists $\psi\in\Psi$ such that $X_{-\psi\star}\not\in\ff$, then
there exist $a\in A$ and $\psi'\in\Psi$ with $\psi'\cdot a=\psi$,
because $\psi$ is a bimodule, hence $a$ and $\psi'$ are such that
$\displaystyle \bigcup_{x'\in X_{-\psi'\star}} X_{-ax'}\not\in\ff$.
Therefore
\[\forall F\in\ff\;\;\exists x\in F\;\;\forall x'\in
X_{-\psi'\star}\;\; (x,x')\not\in X_a.\] Moreover, since
\[\forall
G\in\gg\;\;G\in\{X_{\star\varphi-}\,|\,\varphi\in\Phi\}\;\Rightarrow\;\forall
G\in\gg\;\;\exists y\in X_{-\psi'\star}\cap G,\] we obtain
\[\forall F\in\ff\;\;\forall G\in\gg\;\;\exists x\in F\;\;\exists
y\in G\;\;(x,y)\not\in X_a,\] that is $(\ff,\gg)$ is not a Cauchy
filter.

(ii) $\Rightarrow$ (i): Let $\Phi:1\bim(X,A)$ and $\Psi:(X,A)\bim 1$
be a pair of bimodules and consider $(\{X_{-\psi\star}\,|\,\psi\in
\Psi\}, \{X_{\star\varphi-}\,|\,\varphi\in\Phi\})$. We concluded
already that the adjunction conditions are equivalent to this pair
being a Cauchy filter. But we did not show yet that $\Phi$ and
$\Psi$ are bimodules. For any $a\in A$,
\[(\{\bigcup_{x\in X_{-\psi\star}} X_{-ax}\,|\,\psi\in\Psi,\,a\in
A\},\;\{\bigcup_{y\in X_{\star\varphi-}} X_{ya-}\,|\,\varphi\in\Phi,
a\in A\})\] is a Cauchy filter contained in the former one, as we
show next. First,
\[\bigcup_{x\in X_{-\psi\star}} X_{-ax}\;\cap\;\bigcup_{y\in
X_{\star a-}} X_{ya-}\;\supseteq
\;X_{-\psi\star}\,\cap\,X_{\star\varphi-}\neq\varnothing.\] To prove
the other condition, let $a\in A$, and consider $b\in A$ such that
$b\cdot b\cdot b\leq a$. There exist $\varphi\in\Phi$ and
$\psi\in\Psi$ such that $X_{-\psi\star}\times
X_{\star\varphi-}\subseteq X_b$, and this implies that
\[\displaystyle \bigcup_{x\in X_{-\psi\star}}
X_{-bx}\times\bigcup_{y\in X_{\star\varphi-}}X_{yb-}\subseteq X_a,\]
since
\[x'\in \bigcup_{x\in X_{-\psi\star}}X_{-bx}\;\Rightarrow\;\exists
x\in X_{-\psi\star}\;\;(x',x)\in X_b,\]
\[y'\in\bigcup_{y\in X_{\star\varphi-}} X_{yb-}\;\Rightarrow\;\exists
y\in X_{\star\varphi-}\;\;(y,y')\in X_b;\] hence, since also
$(x,y)\in X_b$, we conclude that $(x',y')\in X_a$ as claimed.
\end{proof}

\subsection{Lawvere-complete=Cauchy-complete.}
It is now straightforward to prove that the two notions of
completeness coincide.

\begin{theorem} For a quasi-uniform space $(X,A)$ the following
conditions are equivalent.
\begin{enumerate}
\item[\em (i)] $(X,A)$ is a Lawvere-complete lax algebra.
\item[\em (ii)] $(X,A)$ is a Cauchy-complete quasi-uniform space.
\end{enumerate}
\end{theorem}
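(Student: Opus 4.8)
The plan is to read off both conditions from results already established in this appendix and observe that they reduce to one and the same combinatorial statement about minimal Cauchy filters. By the proposition above that tests Lawvere-completeness on adjoint pairs with domain $1$, condition (i) is equivalent to: every pair of adjoint bimodules $(\Phi:1\bim(X,A))\dashv(\Psi:(X,A)\bim 1)$ is induced by a map $f:1\to X$, i.e.\ $\Phi=f_*=A\cdot f$ and $\Psi=f^*=f^\circ\cdot A$. The preceding theorem identifies such adjoint pairs with minimal Cauchy filters: for $\Y$-morphisms $\Phi:1\rel X$ and $\Psi:X\rel 1$ one has $\Phi\dashv\Psi$ exactly when $(\{X_{-\psi\star}\mid\psi\in\Psi\},\{X_{\star\varphi-}\mid\varphi\in\Phi\})$ is a minimal Cauchy filter. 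Since every relation $X\rel 1$ is of the form $X_{-\psi\star}\times 1$ and every relation $1\rel X$ of the form $1\times X_{\star\varphi-}$, a $\Y$-morphism into or out of $1$ carries no more information than a filter of subsets of $X$; hence this assignment is in fact a bijection between adjoint pairs $\Phi\dashv\Psi$ and minimal Cauchy filters, the inverse sending a minimal Cauchy filter $(\ff,\gg)$ to $\Psi=\{F\times 1\mid F\in\ff\}$, $\Phi=\{1\times G\mid G\in\gg\}$.

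It remains to check that, under this bijection, the adjoint pairs induced by a map correspond precisely to the neighbourhood filters of points. Fix $x_0\in X$ and the map $f:1\to X$ picking it. A direct computation of the relational composites gives $f^\circ\cdot a=X_{-ax_0}\times 1$ and $a\cdot f=1\times X_{x_0a-}$ for each $a\in A$, so the pair of filters associated (as in the preceding theorem) to $\Phi=f_*=A\cdot f$ and $\Psi=f^*=f^\circ\cdot A$ is $(\{X_{-ax_0}\mid a\in A\},\{X_{x_0a-}\mid a\in A\})$, i.e.\ exactly the neighbourhood filter of $x_0$, which is a minimal Cauchy filter by the lemma above. By injectivity of the correspondence, conversely, an adjoint pair whose associated minimal Cauchy filter is the neighbourhood filter of $x_0$ must coincide with $(f_*,f^*)$, hence be induced by the map $x_0$.

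Putting this together: $(X,A)$ is Lawvere-complete $\iff$ every adjoint pair $\Phi\dashv\Psi$ with domain/codomain $1$ is induced by a map $\iff$ every minimal Cauchy filter is the neighbourhood filter of a point; and by the proposition of this appendix characterising Cauchy-completeness the latter is equivalent to every Cauchy filter converging, that is, to $(X,A)$ being a Cauchy-complete quasi-uniform space. The only step requiring genuine (if routine) work is the identification in the second paragraph of the bimodules $f_*$ and $f^*$ attached to a point $x_0$ with the two components of its neighbourhood filter, together with the remark that turns the correspondence of the preceding theorem into a bijection; everything else is just chaining the cited equivalences.
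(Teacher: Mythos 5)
Your proposal is correct and follows essentially the same route as the paper: reduce Lawvere-completeness to adjoint pairs with domain $1$ via the Proposition, invoke the Theorem identifying such adjoint pairs with minimal Cauchy filters, and compute that the pair $(f_*,f^*)=(A\cdot f,f^\circ\cdot A)$ induced by a point $x_0$ corresponds exactly to its neighbourhood filter, then conclude by the characterisation of Cauchy-completeness. Your packaging of the correspondence as a bijection (with injectivity replacing the paper's direct check that $\Phi=A\cdot f$ and $\Psi=f^\circ\cdot A$) is only a minor reorganisation of the same argument.
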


\begin{proof}
(i) $\Rightarrow$ (ii): Each minimal Cauchy filter in $(X,A)$
defines an adjoint pair of bimodules
$(\Phi:1\bim(X,A))\dashv(\Psi:(X,A)\bim 1)$, which, by (i), is
induced by a map $f:1\to X$, $\star\mapsto x_0$. Hence
$\Phi=\{\varphi_b=b\cdot f\,|\,b\in B\}$ and
$\Psi=\{\psi_b=f^\circ\cdot b\,|\,b\in B\}$. Moreover, $x\in
X_{\star\varphi_b-}$ exactly when $b(x_0,x)=\top$, that is
$X_{\star\varphi_b-}=X_{x_0b-}$, and $x\in X_{-\psi_b\star}$ exactly
when $b(x,x_0)=\top$, which means $X_{-\psi_b\star}=X_{-bx_0}$.

(ii) $\Rightarrow$ (i): Given an adjoint pair of bimodules
$(\Phi:1\bim(X,A))\dashv(\Psi:(X,A)\bim 1)$, by (ii) the minimal
Cauchy filter it induces is the neighbourhood filter of a point
$x_0$. It is straightforward to check that $\Phi=A\cdot f$ and
$\Psi=f^\circ\cdot A$ for $f:1\to X$, $\star\mapsto x_0$.
\end{proof}
\bigskip

\noindent {\em Final remark.} The results of this section can be
investigated in the more general setting introduced in \cite{CHT},
i.e., in proalgebras; here, for simplicity, we decided to state them
only at the level of quasi-uniform structures, which are proalgebras
for the identity monad.

 {\small
\vspace*{2mm}

\noindent\begin{tabular}{lp{5mm}l}  
Maria Manuel Clementino&&Dirk Hofmann\\  
CMUC/ Department of Mathematics&&UIMA/Department of Mathematics\\ 
University of Coimbra&&University of Aveiro\\
3001-454 Coimbra, PORTUGAL&&3810-193 Aveiro, PORTUGAL\\
mmc@mat.uc.pt&&dirk@mat.ua.pt\\  
\end{tabular}  


\begin{thebibliography}{99}
\bibitem{B} M.\ Barr, Relational algebras, in: {\em Springer Lecture Notes in Math.} \textbf{137} (1970), pp. 39--55.
\bibitem{Benabou} J. B\'{e}nabou, {\it Distributors at work}, lecture notes written by
Thomas Streicher, www.mathematik.tu-darmstadt.de/$\sim$streicher/
\bibitem{BCRW_VarEnr} R.\ Betti, A.\ Carboni, R.\ Street and R.\
Walters, Variation through enrichment, {\em J. Pure Appl. Algebra}
\textbf{29} (1983) 109--127.
\bibitem{BBR} M.M. Bonsangue, F. van Breugel and J.J.M.M. Rutten,
Generalized metric spaces: Completion, topology, and powerdomains
via the Yoneda embedding, {\em Theoret. Comput. Sci.} {\bf 193}
(1998) 1-51.
\bibitem{Borceux} F.\ Borceux, {\em Handbook of Categorical Algebra},
Vol. 1, Cambridge Univ. Press 1994.
\bibitem{BD_CauchyCompl} F.\ Borceux and D.\ Dejean, Cauchy completion in category theory,
{\em Cahiers Topologie G\'eom. Diff\'erentielle Cat\'eg.} \textbf{27} (1986) 133--146.
\bibitem{CH} M.M.\ Clementino and D.\ Hofmann, Topological features of lax algebras,
{\em Appl.\ Categ.\ Structures} {\bf 11} (2003) 267-286.
\bibitem{CH2} M.M.\ Clementino and D.\ Hofmann, On extensions of
lax monads, {\em Theory Appl. Categ.} {\bf 13} (2004) 41-60.
\bibitem{CHT} M.M.\ Clementino, D.\ Hofmann and W.\ Tholen,
One setting for all: metric, topology, uniformity, approach
structure, {\em Appl.\ Categ.\ Structures} {\bf 12} (2004) 127-154.
\bibitem{CT} M.M.\ Clementino and W.\ Tholen, Metric, Topology and Multicategory --
 A common approach, {\em J.\ Pure Appl.\ Algebra} {\bf 179} (2003) 13-47.
\bibitem{EK_CloCat} S.\ Eilenberg and G.M.\ Kelly, Closed categories, In {\em Proc. Conf.
Categorical Algebra (La Jolla, Calif., 1965)}, 421--562, Springer, New York, 1966.
\bibitem{FL} P. Fletcher and W. Lindgren, A construction of the
pair-completionf a quasi-uniform space, \emph{Canad. Math. Bull.}
{\bf 21} (1978), 53-59.
\bibitem{FL2} P. Fletcher and W. Lindgren, \emph{Quasi-uniform
spaces}, Lecture Notes in Pure and Applied Maths, Marcel Dekker ed,
1982.
\bibitem{FS_CatAll}P.\ Freyd and A.\ Scedrov, \emph{Categories, allegories}, volume~39 of
North-Holland Mathematical Library, North-Holland Publishing Co., Amsterdam, 1990.
\bibitem{Hof_Quot} D.\ Hofmann, An algebraic description of regular epimorphisms in topology,
{\em J. Pure Appl. Algebra} {\bf 199} (2005) 71--86.
\bibitem{H} D.\ Hofmann, Topological theories and closed objects, Preprint CM06 I-35 (2006), University of Aveiro.
\bibitem{Kel_EnrCat} G.M.\ Kelly, {\em Basic concepts of enriched category theory}, volume~64 of
{\em London Mathematical Society Lecture Note Series}, Cambridge University Press, Cambridge, 1982.
\bibitem{L} F.W.\ Lawvere, Metric spaces, generalized logic, and closed
categories, {\em Rend.\ Sem.\ Mat.\ Fis.\ Milano} {\bf 43} (1973)
135-166; {\em Reprints in Theory and Applications of Categories}
{\bf 1} (2002) 1-37.
\bibitem{Low_ApBook}R.~Lowen, {\em Approach spaces}, Oxford Mathematical Monographs,
The Clarendon Press Oxford University Press, New York, 1997.
\bibitem{MST}M.\ Mahmoudi, C.\ Schubert and W.\ Tholen,
Universality of coproducts in categories of lax algebras, {\em Appl.
Categ. Structures} {\bf 14} (2006) 243-249.
\bibitem{M} E.\ Manes, Taut monads and $T0$-spaces, {\em
Theoret.\ Comput.\ Sci.} {\bf 275} (2002) 79-109.
\bibitem{Moe_Tensor}I.\ Moerdijk, Monads on tensor categories.
{\em J. Pure Appl. Algebra} \textbf{168}, 2-3 (2002), 189--208.
\bibitem{Olm_AppFrame}C.\ Van~Olmen, {\em A study of the interaction between frame theory and approach theory},
PhD thesis, University of Antwerp, 2005.
\bibitem{SEAL_LaxAlg}G.J.\ Seal, Canonical and op-canonical lax
algebras, {\em Theory Appl.\ Categ.} \textbf{14} (2005) 221--243
(electronic).
\bibitem{Tholen} W.\ Tholen, \emph{Lectures on Lax-Algebraic Methods in
General Topology}, Lecture 1: $\V$-categories, $\V$-modules, Lawvere
completeness, Haute-Bodeux (2007),
{\tt http://www.math.yorku.ca/\~{}tholen/HB07Tholen1.pdf}
\bibitem{Wood} R.J.\ Wood, Abstract proarrows. I. {\em Cahiers Topologie G\'{e}om. Diff\'{e}rentielle}
\textbf{23} (1982), no. 3, 279-290.
\bibitem{Wood_ccd} R.J.\ Wood, Ordered sets via adjunctions, in: {\em Categorical foundations},
volume~97 of {\em Encyclopedia Math. Appl.}, pages 5--47. Cambridge Univ. Press, Cambridge, 2004.
\end{thebibliography}
\end{document}